\def\@tocline#1#2#3#4#5#6#7{\relax
  \ifnum #1>\c@tocdepth 
  \else
    \par \addpenalty\@secpenalty\addvspace{#2}%
    \begingroup \hyphenpenalty\@M
    \@ifempty{#4}{%
      \@tempdima\csname r@tocindent\number#1\endcsname\relax
    }{%
      \@tempdima#4\relax
    }%
    \parindent\z@ \leftskip#3\relax \advance\leftskip\@tempdima\relax
    \rightskip\@pnumwidth plus4em \parfillskip-\@pnumwidth
    #5\leavevmode\hskip-\@tempdima
      \ifcase #1
       \or\or \hskip 1em \or \hskip 2em \else \hskip 3em \fi%
      #6\nobreak\relax
    \hfill\hbox to\@pnumwidth{\@tocpagenum{#7}}\par
    \nobreak
    \endgroup
  \fi}
\theoremstyle{definition}
\newtheorem{Def}{Definition}[section]
\newtheorem{Rem}[Def]{Remark}
\newtheorem{Ex}[Def]{Example}
\newtheorem{sss}[Def]{}
\newtheorem{Prop}[Def]{Proposition}
\newtheorem{Thm}[Def]{Theorem}
\newtheorem{Lem}[Def]{Lemma}
\newtheorem{Cor}[Def]{Corollary}
\newtheorem{Conj}[Def]{Conjecture}
\newcommand{\R}{\mathbb{R}}
\newcommand{\C}{\mathbb{C}}
\newcommand{\Z}{\mathbb{Z}}
\newcommand{\Q}{\mathbb{Q}}
\newcommand{\N}{\mathbb{N}}
\newcommand{\spec}{{\rm Spec}}
\newcommand{\Star}{{\rm Star}}
\newcommand{\wt}{\widetilde}
\newcommand{\wtf}{{\rm wt}_\omega}
\newcommand{\ol}{\overline}
\newcommand{\A}{\mathscr{A}}
\newcommand{\red}{\mathrm{red}}
\newcommand{\M}{\mathscr{M}}
\newcommand{\sL}{\mathscr{L}}
\newcommand{\cX}{\ensuremath{\mathscr{X}}}
\newcommand{\an}{\mathrm{an}}
\newcommand{\tb}{\tilde{b}}
\newcommand{\sM}{\mathscr{M}}
\newcommand{\str}{\mathcal{O}}
\newcommand{\Sk}{\mathrm{Sk}}
\newcommand{\cC}{\mathcal{C}}
\newcommand{\fX}{\mathfrak{X}}
\newcommand{\sC}{\mathscr{C}}
\newcommand{\DEG}{{\rm \bf DEG}_{\rm ample}^{\rm split}}
\newcommand{\DD}{{\rm \bf DD}_{\rm ample}^{\rm split}}
\newcommand{\For}{{\rm \bf For}}
\newcommand{\f}{{\rm \bf for}}
\newcommand{\Pm}{\mathscr{P}}
\newcommand{\Pt}{{\tilde{\mathscr{P}}}}
\newcommand{\rig}{{\rm \bf rig}}
\newcommand{\ber}{{\rm \bf ber}}
\newcommand{\spf}{{\rm Spf}}
\newcommand{\cXt}{{\tilde{\cX}}}
\newcommand{\rsp}{{\rm Sp}}
\begin{document}
\pagestyle{fancy}
\lhead[\thepage{} Keita Goto]{}
\rhead[]{On the two types of affine structures for degenerating Kummer surfaces \thepage{}}
\cfoot[]{}
\normalfont
\title{On the two types of affine structures for degenerating Kummer surfaces \\ --Non-archimedean vs Gromov-Hausdorff limits--}
\author{Keita Goto}
\address{Department of Mathematics, Kyoto university, Oiwake-cho, Kitashirakawa, Sakyo-ku, Kyoto city,
Kyoto 606-8285. JAPAN}
\email{k.goto@math.kyoto-u.ac.jp}
\keywords{Non-archimedean geometry, Kummer Surface, skelton, Gross-Siebert program, SYZ conjecture, Kontsevich-Soibelman conjecture 3
}
\thanks{This work is supported by JSPS KAKENHI Grant Number JP20J23401.}
\date{\today}
\begin{abstract}

Kontsevich and Soibelman constructed integral affine manifolds with singularities (IAMS, for short)
  for maximal degenerations of polarized Calabi-Yau manifolds
  in a non-Archimedean way.
  On the other hand, for each maximally degenerating family of polarized Calabi-Yau manifolds,
  we can consider the Gromov-Hausdorff limit of the fibers.
It is expected that
this Gromov-Hausdorff limit carries an IAMS-structure.
   Kontsevich and Soibelman conjectured that these two types of IAMS
   are the same. This conjecture is believed in the mirror symmetry context.
   In this paper, we prove the above conjecture for maximal degenerations of
   polarized Kummer surfaces.
\end{abstract}
\maketitle
\thispagestyle{empty}
\tableofcontents

\section{Introduction}
\begin{sss}
At the end of the $20$th century, in order to formulate what is called mirror symmetry,
several approaches have been proposed.
One of them is due to Strominger, Yau and Zaslow \cite{Strominger1996MirrorSI}.
In \emph{op.cit.},
they gave a geometric interpretation for mirror symmetry
and proposed a conjecture called the SYZ conjecture.
Gross and Siebert provided an algebro-geometric interpretation
 of the SYZ conjecture \cite{article}. It is known as the Gross-Siebert program.
In this program, it is important to construct an integral affine manifold with singularities (IAMS,for short) from degeneration
of polarized Calabi-Yau manifolds, and vice versa.
For a (toric) degeneration
of polarized Calabi-Yau manifolds,
they extracted the polyhedral decomposition and the fan structure for each vertex
and gave an IAMS structure to the dual intersection complex based on them, and vice versa.

Kontsevich and Soibelman constructed an IAMS
structure of the dual intersection complex in a non-Archimedean way \cite{Kontsevich2006}.
The exact definition will be given later (\S 4), but for now, we call it \emph{non-Archimedean SYZ Picture}.
In [\emph{op.cit.}, \S4.2],
they mentioned
the specific IAMS structure
for
 the degeneration of
 K3 surfaces defined by
$$\{x_0x_1x_2x_3+tP_4(x)=0 \}\subset \mathbb{P}^3\times \Delta,$$
where
$x=[x_0:x_1:x_2:x_3]$ are homogeneous coordinates on
$\mathbb{P}^3$,
$\Delta$ is a (formal) disk with a (formal) parameter $t$
and $P_4$ is a generic homogeneous polynomial of degree $4$.
For general degenerations of  K3 surfaces,
however, the specific affine structures constructed in this way are not well known.

The main goal of this paper is to reveal the IAMS structure
constructed in the non-Archimedean SYZ Picture
for degenerations of Kummer surfaces.
Further, we clarify the sense of what is called `Collapse Picture' in
\cite{Kontsevich2006}
 related to the Gromov-Hausdorff limit (In this paper,
we also call it
 \emph{`Gromov-Hausdorff limit Picture'}) and
prove the following conjecture appeared in \emph{op.cit.} for degenerations of Kummer surfaces.
\begin{Conj}[{\cite[Conjecture 3]{Kontsevich2006}}]
For maximal degenerating polarized algebraic Calabi-Yau varieties,
the IAMS structure induced by
Collapse Picture
coincides with the IAMS structure induced by non-Archimedean SYZ Picture.
\end{Conj}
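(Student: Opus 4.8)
\noindent\emph{Proof strategy.} The plan is to reduce the asserted equality of integral affine structures to the \emph{metric} non-Archimedean SYZ conjecture for the family, and to locate the genuine analytic content there. Pass from the degenerating family $\cX\to\Delta$ to the non-Archimedean field $K=\C(\!(t)\!)$ and let $X=\cX_K$ be the generic fibre, a maximally degenerate polarized Calabi--Yau variety over $K$. On the non-Archimedean side one has the Berkovich space $X^{\an}$, its essential skeleton $\Sk(X)$, and the IAMS $\mathfrak a_{\mathrm{NA}}$ attached in the non-Archimedean SYZ Picture (\S 4) to the smooth locus $\Sk(X)^{\snc}$; on the collapse side one equips, for small $t$, the fibre $X_t$ with its Ricci--flat K\"ahler metric rescaled to unit diameter, forms the Gromov--Hausdorff limit $B_\infty$, and obtains an IAMS $\mathfrak a_{\mathrm{GH}}$ on its smooth locus. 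I would first establish the \emph{topological} non-Archimedean SYZ conjecture: a homeomorphism $\Sk(X)\cong B_\infty$ matching smooth loci, singular sets and local integral-affine models, by combining the essential-skeleton machinery (Musta\c{t}\u{a}--Nicaise, Nicaise--Xu) on the algebraic side with the structure theory of Gromov--Hausdorff limits of collapsing Calabi--Yau manifolds (Gross--Wilson, Gross--Tosatti--Zhang, Tosatti and collaborators). Write $S$ for the resulting common smooth locus.

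Next I would reduce the comparison on $S$ to a comparison of lattices. Each IAMS on $S$ amounts to a flat torsion-free connection on $TS$ together with a parallel lattice subbundle $\Lambda\subset TS$, and a short rigidity argument pins down the whole structure from $\Lambda\subset TS$: any two flat torsion-free connections for which a common lattice bundle $\Lambda\subset TS$ admits local parallel frames agree, since two such frames of $\Lambda$ differ by a locally constant $\mathrm{GL}(n,\Z)$-valued function, so that after matching $\Lambda\subset TS$ only a translational class in $H^1(S,\Lambda)$ remains. Now, as abstract local systems, the lattice $\Lambda_{\mathrm{NA}}$ of the Kontsevich--Soibelman monomial charts and the lattice $\Lambda_{\mathrm{GH}}$ of the limiting semiflat structure both coincide with the vanishing-cycle local system of the degeneration (this is built into the two constructions), so their monodromy representations agree automatically; the residual problem is to match the two \emph{embeddings} $\Lambda\hookrightarrow TS$, equivalently to show that the Kontsevich--Soibelman monomial (``tropical period'') coordinates and the action (``period'') coordinates of the collapsing picture differ only by an element of $\mathrm{GL}(n,\Z)\ltimes\R^n$.

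That matching is a period computation that uses the limiting geometry, and I would supply the non-Archimedean side with explicit objects to compare against. By Boucksom--Favre--Jonsson the polarization on $X/K$ carries a unique (up to scaling) continuous semipositive metric solving the non-Archimedean Monge--Amp\`ere equation whose measure is the canonical measure supported on $\Sk(X)$; its restriction to $S$ is, in the Kontsevich--Soibelman charts, a convex solution of a real Monge--Amp\`ere equation (Hultgren--Jonsson--Mazzon--McCleerey), whose Hessian is the non-Archimedean ``canonical'' metric $g_{\mathrm{NA}}$ compatible with $\mathfrak a_{\mathrm{NA}}$. On the collapse side the semiflat description of collapsing Ricci--flat metrics (Gross--Wilson, Gross--Tosatti--Zhang) identifies the limit metric on $S$ with the Hessian of a convex solution of the same real Monge--Amp\`ere equation in the $\mathfrak a_{\mathrm{GH}}$-charts. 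Thus the Conjecture becomes the statement that, under the homeomorphism $\Sk(X)\cong B_\infty$, the collapsing Ricci--flat metrics together with their period data converge to the non-Archimedean canonical metric together with its tropical period data --- the \emph{metric} non-Archimedean SYZ conjecture for this family.

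Finally I would prove that convergence through Boucksom--Jonsson's hybrid space $\cX^{\mathrm{hyb}}$, whose generic part is the total space of the family and whose central fibre is $X^{\an}$: first, that the Calabi--Yau potentials of the rescaled Ricci--flat metrics extend continuously over $\cX^{\mathrm{hyb}}$ with boundary value the non-Archimedean potential (continuity of the Calabi--Yau solutions in the hybrid topology, known for abelian varieties and for Fermat-type hypersurfaces by Boucksom--Jonsson, Pille-Schneider, Shivaprasad, Y.~Li); then, an upgrade of this $C^0$ convergence to interior $C^2_{\mathrm{loc}}$ convergence on $S$ away from the affine singularities, so that the Hessian metrics converge and, together with the convergence of the collapsing torus fibrations, the action coordinates converge to their non-Archimedean counterparts. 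This $C^0$-to-$C^2$ upgrade --- uniform elliptic estimates for the collapsing complex Monge--Amp\`ere equation, uniform up to the large complex-structure limit and localized away from the IAMS singular set --- is the main obstacle: it is the one step not reducible to structural or combinatorial arguments, and it is exactly what is not yet available for an arbitrary maximal degeneration. Everything upstream --- the homeomorphism of underlying spaces, the reduction to matching lattice embeddings, and the two Monge--Amp\`ere characterizations --- is structural and independent of the fine geometry of the fibres; the contribution of the present paper is to carry out this analytic core in the case of Kummer surfaces.
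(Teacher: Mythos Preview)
The statement you are addressing is recorded in the paper as a \emph{conjecture}, not a theorem: the paper does not claim to prove it in general and explicitly presents only the Kummer-surface case (Theorem~\ref{main}) as its main result. So there is no general proof in the paper against which to compare your strategy; read as a programme for the full conjecture, your outline correctly isolates the genuine analytic obstruction (uniform interior $C^2$ estimates for the collapsing Monge--Amp\`ere problem) that remains open, and you yourself flag that this step ``is not yet available for an arbitrary maximal degeneration.'' In that sense the proposal is not a proof but a roadmap with a known gap.

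Where your proposal is actually wrong is its final sentence. The paper does \emph{not} ``carry out this analytic core in the case of Kummer surfaces''; it bypasses the hybrid-space/Monge--Amp\`ere route entirely. For abelian surfaces and their $\{\pm 1\}$-quotients both Pictures can be computed explicitly and matched by hand. On the non-Archimedean side, K\"unnemann's toroidal model $\Pt$ gives $\Sk(\Pt)\cong N_\R$ with its standard integral affine structure, and the quotient by $\Gamma=L\rtimes\{\pm 1\}$ produces the IAMS on $\Sk(X)$ directly (Theorem~\ref{nAside}); the affine structure is read off from the degeneration datum $b:L\times M\to\Z$. On the collapse side, the flat metrics on abelian surfaces and the resulting semiflat structure on the base are computed explicitly (Theorem~\ref{GHside}, from \cite{GO}), and the limit is governed by the Gram matrix $(cB(l_i,l_j))$ for the same bilinear form. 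The comparison (Theorems~\ref{Conj3 for as} and~\ref{main}) is then an immediate identification of these two descriptions. No non-Archimedean Monge--Amp\`ere potential, no hybrid continuity, and no $C^0$-to-$C^2$ upgrade enter the argument. Your programme may well be the right one for the general conjecture, but it is not the method of this paper, and attributing the analytic step to it obscures exactly what makes the Kummer case tractable: the explicit toroidal/flat description available on both sides.
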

That is, the following is the main theorem of this paper.
\begin{Thm}[{= Theorem \ref{main}, \cite[Conjecture 3]{Kontsevich2006}} for Kummer surfaces]\label{main thm}
  For maximal degenerations of polarized Kummer surfaces,
  the IAMS structure induced by
  Gromov-Hausdorff limit Picture
  coincides with the IAMS structure induced by non-Archimedean SYZ Picture up to scaling.
\end{Thm}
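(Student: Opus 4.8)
The plan is to compute both IAMS structures explicitly on a concrete model and then match them. A polarized Kummer surface arises as the minimal resolution of $A/\{\pm 1\}$ for a polarized abelian surface $A$, and a maximal degeneration of such Kummer surfaces is induced by a maximal (totally degenerate) degeneration of the abelian surfaces $\mathscr{A}/\Delta$. First I would recall that both Pictures are already understood for abelian surfaces: on the non-Archimedean side, the Berkovich skeleton $\Sk(\mathscr{A}^{\an})$ of a totally degenerate abelian surface is the real torus $\R^2/\Lambda$ carrying its standard integral affine structure coming from the tropicalization/uniformization (Mumford), and on the Gromov–Hausdorff side, work of Gross–Wilson–type collapsing (and Odaka–Oshima, Boucksom–Jonsson) identifies the GH limit of the fibers, suitably rescaled, with the same flat torus $\R^2/\Lambda$ equipped with a flat (hence integral-affine, after normalization) metric. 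So the statement is known for the abelian surface itself; the content here is transporting it through the Kummer quotient and the resolution.

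The key steps, in order, are: (1) On the non-Archimedean side, analyze the involution $[-1]$ acting on $\Sk(\mathscr{A}^{\an}) = \R^2/\Lambda$: it acts as $x \mapsto -x$, with four fixed points (the $2$-torsion), and the quotient $(\R^2/\Lambda)/\{\pm1\}$ is a topological $2$-sphere with four special points. I would show, following the construction in \S4, that $\Sk$ of the degenerating Kummer family is exactly this quotient, that the integral affine structure on the smooth locus is the pushforward of the one upstairs, and that each of the four points acquires the standard IAMS singularity of a degenerate $A_1$ (focus-focus–type) point with the correct monodromy; the exceptional $(-2)$-curves of the resolution contribute nothing new to $\Sk$ because they are contracted. (2) On the GH side, pass the Gross–Wilson collapsing family through the quotient: the Ricci-flat Kähler metrics on the Kummer surfaces converge, away from the four orbifold points, to the quotient flat metric on $(\R^2/\Lambda)/\{\pm 1\}$, and near each orbifold point one uses the known local model (an Eguchi–Hanson / ALE gluing whose collapsed limit is the flat cone $\C/\{\pm1\}$) to see that the limit metric is an integral affine structure with exactly the same four singular points. (3) Compare: away from the singular points both affine structures are literally the quotient of the flat affine structure on $\R^2/\Lambda$, so they agree up to the global scaling constant relating the non-Archimedean and metric normalizations; and at the four singular points both are the standard model singularity, so they agree there too. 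Finally invoke a rigidity statement (an IAMS structure on $S^2$ with four standard singularities is unique up to affine equivalence and scale) to conclude they coincide up to scaling, which is Theorem~\ref{main}.

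The main obstacle I expect is step (2): controlling the Gromov–Hausdorff limit \emph{near the orbifold/ALE points} and identifying the resulting local affine chart with the non-Archimedean local model. Globally on the complement of the $2$-torsion the comparison is a formal consequence of the abelian-surface case plus functoriality under the free quotient, but the four special points are precisely where the resolution $X \to A/\{\pm1\}$ does something nontrivial, and where the collapsing family is not a Riemannian submersion in any naive sense. I would handle this by importing the local analysis of the collapse of hyperkähler ALE spaces (the behavior of the Ogg–Shafarevich/Kummer construction under collapsing, as in the K3 collapsing literature), showing the rescaled diameter of each exceptional locus tends to $0$ and that the ambient metric limits to the flat $\R^2$-cone of cone angle $\pi$, which is exactly the focus-focus singularity appearing on the non-Archimedean side. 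A secondary technical point is tracking the scaling factor consistently through Mumford uniformization, the hyperkähler rotation, and the normalization of the essential skeleton, so that "up to scaling'' is the only discrepancy that survives; this is bookkeeping but must be done carefully, e.g. by evaluating both structures on one explicitly chosen integral affine coordinate (say a period of a vanishing cycle) and comparing.
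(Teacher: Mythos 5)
Your plan follows essentially the same strategy as the paper: first identify both Pictures for the abelian surface (getting the flat torus $N_\R/L$ with the affine structure determined by the degeneration data) and then descend through the Kummer quotient by $\{\pm 1\}$, noting that the singular locus is the image of the $2$-torsion. The paper's Theorem~\ref{Conj3 for as} handles the abelian case, Theorem~\ref{nAside} handles the non-Archimedean Kummer side, and Corollary~\ref{GHside for Kummer} handles the GH Kummer side, all before combining them in Theorem~\ref{main}.

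That said, two things deserve comment. First, your final ``rigidity'' step is both unnecessary and false: an IAMS structure on $S^2$ with four focus-focus singularities is \emph{not} unique up to integral-affine equivalence and scaling. Concretely, the quotient $(N_\R/L)/\{\pm1\}$ depends on the lattice $L \hookrightarrow N_\R$ (equivalently on the form $B(l_i,l_j)$), and this moduli parameter survives even after scaling and $\mathrm{GL}_2(\Z)$-equivalence; a square lattice and a hexagonal lattice yield inequivalent IAMS on $S^2$. If you actually leaned on this rigidity claim, the argument would not close. Fortunately you do not need it: the direct comparison you sketch (both structures are the $\{\pm1\}$-quotient of the \emph{same} flat structure on $N_\R/L$ once the abelian case is established) is exactly how the paper concludes Theorem~\ref{main}, so simply delete the rigidity appeal. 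Second, what you call the ``known'' abelian-surface case is not a black box the paper can import: it is Theorem~\ref{Conj3 for as}, and on the non-Archimedean side its proof is where most of the real work sits --- K\"{u}nnemann's toroidal construction, the modified cone decomposition of Lemma~\ref{key subdivision} (in particular properties (e)--(g) ensuring that the relevant stars give evenly covered neighborhoods), and the identification of the Berkovich retraction with the tropicalization map so that $\rho_{\Pm}$ is an affinoid torus fibration. Your proposal treats this as recall rather than work to be done, which underestimates what step (1) requires. Your ALE/Eguchi--Hanson local analysis near the four singular points is more refined than what the paper does on the GH side; the paper avoids it by phrasing Theorem~\ref{GHside} and Corollary~\ref{GHside for Kummer} in terms of the quotient $G_t/H$ rather than the resolution, using that the exceptional $(-2)$-curves collapse and (on the non-Archimedean side) that blowing up the $2$-torsion does not change the dual intersection complex since those points lie on top-dimensional strata.
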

In the process of proving this, we prove
that the non-Archimedean SYZ Picture for polarized Kummer surfaces
 is explicitly described by  the degeneration data as in \cite{FC} (= Theorem \ref{nAside}).
We note that it does not depend on the polarization as we will state in Remark \ref{indep of pol}.
On the other hand, when we consider the Gromov-Hausdorff limit of the fibers, we need the polarization.
At first glance, the Gromov-Hausdorff limit Picture seems to depend on the polarization,
however, the above Theorem \ref{main thm} implies that it does not depend on the polarization.
In addition, we prove that \cite[Conjecture 3]{Kontsevich2006} for abelian surfaces also holds (=Theorem \ref{Conj3 for as}).
\end{sss}
\begin{sss}
Here is a brief description of the structure of this paper.
In $\S 2$, we introduce some notation and collect some basic facts for subsequent discussions.
In $\S 3$, we recall K\"{u}nnemann's construction
of
 projective models of abelian varieties with finite group actions.
It is a modification of Mumford's construction by which we can construct a semiabelian degeneration
from degeneration data. In K\"{u}nnemann's construction, it is important to construct a cone decomposition
associated with the degeneration. Further, the cone decomposition is also
important for non-Archimedean SYZ Picture as we will see.
In applying
K\"{u}nnemann's construction
 to the proof of our main theorem, we will
modify his method due to technical problems
 (= Lemma \ref{key subdivision}).
In $\S 4$, we recall non-Archimedean SYZ fibration that is
originally introduced in \cite{Kontsevich2006}. We use terminology based on \cite{nicaise_xu_yu_2019}.
In \emph{op.cit.},
the authors dealt with `good' minimal dlt-models with a technical assumption. However, since
we will only deal with snc-models here,
some definitions have been simplified accordingly.
In \emph{op.cit.},
they
proved that a singular locus of an IAMS induced by non-Archimedean SYZ fibration
is of codimension $\geq 2$. Further,
they proved the uniqueness of what they call  \emph{piecewise} integral affine
 structures.
 It is more of topological structures.
In contrast,
 our results of this paper focus on the IAMS structure,
 and we describe it explicitely.
 In $\S 5$, we prove Theorem \ref{main thm}.
 To this end, we combine the tools introduced in the previous
 sections to show what exactly happens to
 the non-Archimedean Picture of an Abelian variety with a finite group action.
 Further, we define the Gromov-Hausdorff limit Picture
 and observe some properties about it.
 By comparing the two IAMS structures for degenerations of
 Kummer surfaces, we prove Theorem \ref{main thm}.
\end{sss}

 \begin{center}
   { \large{Acknowledgements}}
 \end{center}
 I would like to thank
 Professor \emph{Yuji Odaka}
  for a lot of suggestive advice and productive discussions.
I learned
radiance obstructions and integral affine structures from
Dr.
\emph{Yuto Yamamoto}
and
Mr. \emph{Yuki Tsutsui}.
I am grateful to them.
 This work is supported by JSPS KAKENHI Grant Number JP20J23401.

\section{Preliminaries and Notation}
\begin{sss}
  In this paper, we fix the notation as follows:
  Let $R$ be a complete discrete valuation ring (cDVR, for short) with uniformizing parameter $t$ and
   algebraically closed residue  field $k$.
We note that we start with the residue field $k$ of an arbitrary characteristic,
 but later we make the condition stronger.
   Let $S=\spec R$, and let $\eta$ be the generic point of $S$.
We denote by $K=\str_{S,\eta}$ the fraction field of $R$.
  Let $|\cdot|$ be the valuation on $K$ uniquely determined by $|t|=e^{-1}$.
\end{sss}

\begin{Def}
Let $X$ be a locally Noetherian scheme and let $D$ be an effective Cartier divisor on $X$. Let $D_1,..., D_r$ be the irreducible components of $D$ endowed with the reduced induced closed subscheme structure. For each subset $J\subseteq \{1,...,r\},$ we denote by $D_J$ the scheme-theoretic intersection $\cap_{j\in J} D_j$.
If $J=\emptyset$, we note $D_{\emptyset}:=X.$\\
An effective divisor $D$ on $X$ is said to be with  \emph{strict normal crossings} if it satisfies the following.
\begin{itemize}
  \item
  $D$ is reduced.
  \item
  For each point $x$ of $D$, the stalk $\str_{X,x}$ is regular.
  \item
  For each nonempty set
  $J\subseteq \{1,..., r\}$, the scheme $D_J$ is regular and of codimension $|J|$ in $X$.

  \end{itemize}

\end{Def}

\begin{sss}
  Let $X$ be a smooth $K$-variety, where the characteristic of $K$ is assumed to be zero. A {\em model} of $X$ is a flat $R$-algebraic space $\cX$
  endowed with an isomorphism $\cX_K (=\cX\times_S \spec K)\to X$.
  (We do not assume properness and quasi-compactness.)
    An \emph{snc-model of $X$} (or, a \emph{semistable model of $X$}) is a regular model $\cX$ of $X$ such that $\cX$ is a scheme and the central fiber $\cX_k(=\cX\times_S \spec k)$ is a divisor with strict normal crossings.
    By the semistable reduction theorem
     \cite[Chapter 4 \S3]{kempf1973toroidal}, there exists a finite extension $K'$ of $K$ such that $X\times_K K'$ has an
      snc-model over the integral closure of $R$ in $K'$.
     Further, if $X$ is projective, then we can
     obtain a projective snc-model.
\end{sss}
\begin{Def}[Kulikov Model]
   Let $X$ be a geometrically integral smooth projective variety over $K$ with $\omega_X \cong \str_X$. A \emph{Kulikov model} of $X$ is a regular algebraic space $\mathscr{X}$ that is proper and flat over $S$ with the following properties:
   \begin{itemize}
     \item
      The algebraic space $\mathscr{X}$ is a model of $X$.
     \item
      The special fiber  $\mathscr{X}_k$ of $\mathscr{X}$ is a reduced scheme.
     \item
     The special fiber $\mathscr{X}_k$   has strict normal crossings on $\mathscr{X}.$
     \item
      $\omega_{\mathscr{X}/S}$ is trivial.\label{Kulikovdefinition}
   \end{itemize}
\end{Def}
\begin{sss}
  If a Kulikov model $\cX$ of $X$ is a scheme, then $\cX$ is an snc-model.
\end{sss}
\begin{Def}
  A \emph{stratification} of a scheme $X$ is a \emph{not necessarily} finite set $\{X_{\alpha}\}_{\alpha \in I}$ of locally closed subsets, called the \emph{strata}, such that
  every point of $X$ is in exactly one stratum, and such that the closure of a stratum is a finite union of strata.
   We note that a stratification  in the sense of \cite[(1.3)]{Kun} (or \cite[p.56]{kempf1973toroidal}) had to be a finite set.
\end{Def}
\begin{sss}
  For an snc-model $\cX$,  the special fiber $\cX_k$ induces a stratification of $\cX_k$ naturally.
We denote by $\Delta (\cX)$ the \emph{dual intersection complex} of the special fiber $\cX_k$ with respect to this stratification.
\end{sss}

\begin{sss}
  For an $R$-scheme $\cX$, we denote by $\cX_\f$  \emph{the formal completion of $\cX$ along the special fiber $\cX_k$}.
  If $\cX$ is covered by  open affine subschemes of the form  $\spec A_\alpha$, the formal completion
  $\cX_\f$ is obtained by glueing open formal subschemes of the form
  $\spf \hat{A}_\alpha$ together, where
  $\hat{A}_\alpha$ is the $t$-adic completion of $A_\alpha$.
  In particular, for flat $R$-scheme $\cX$ locally of finite type,
  the formal completion $\cX_\f$ is a flat formal $R$-scheme locally of finite type.
  Here, a \emph{flat formal $R$-scheme locally of finite type} (resp. admissible formal $R$-scheme)
  means that it is covered by \emph{not necessarily} finitely many (resp. finitely many) open formal subschemes
  of the form $\spf \A_\alpha$, where $\A_\alpha$ is  an admissible $R$-algebra.
\end{sss}
\begin{sss}\label{Ray}
  For $R$-algebra $\A$,
  we write $\A_K$ (resp. $\A_k$) instead of $\A \otimes_R K$ (resp. $\A \otimes_R k$).
   We can consider
   a functor called
   \emph{the Raynaud generic fiber}

$-_{\rig}:$ \{flat formal $R$-schemes locally of finite type\} $\to$ \{rigid $K$-spaces\}.

The functor is constructed by sending
an affine admissible formal $R$-scheme $\spf \A$
to
the $K$-affinoid space $\rsp \A_K$,
where the underlying space of $\rsp \A_K$ is the set ${\rm Max} \A_K$ of all maximal ideals of $\A_K$
equipped with the weak topology with respect to $\A_K$ and the $G$-topology (cf. \cite[9.1.4]{BGR}).
This functor first appeared in \cite{MSMF_1974__39-40__319_0}.
It is known that this functor preserves fiber products.
Let
$f: \fX \to \mathfrak{Y}$
be a morphism
 between formal $R$-schemes locally of finite type.
 If $f: \fX \to \mathfrak{Y}$ is a finite morphism
 (resp. closed immersion, open immersion, immersion, separated morphism), then
$f_{\rig}: \fX_{\rig} \to \mathfrak{Y}_{\rig}$ is a finite morphism
(resp. closed immersion, open immersion, immersion, separated morphism).
\end{sss}

\begin{sss}
Berkovich gave the fully faithful functor

$-_{0}:$ \{separated strictly $K$-analytic spaces \} $\to$ \{rigid $K$-spaces\}

\noindent in the process of basing his analytic spaces (cf.
\cite[\S 3.3]{Berk90}).
This functor preserves fiber products. In addition,
the following also holds.
\begin{Prop}[{\cite[Proposition 3.3.2]{Berk90}}]\label{RB}
  Let $f:X \to Y$ be a morphism between separated strictly $K$-analytic spaces,
$f: X \to Y$ is a finite morphism
(resp. closed immersion, open immersion, immersion, separated morphism)
if and only if
$f_{0}: X_{0} \to Y_{0}$ is a finite morphism
(resp. closed immersion, open immersion, immersion, separated morphism).
\end{Prop}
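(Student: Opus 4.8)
The plan is to reduce each of the five listed properties to a statement about strictly $K$-affinoid algebras, where it becomes essentially a tautology, since $-_0$ restricts to the equivalence $\M(\mathcal{A})\mapsto\rsp\mathcal{A}$ between strictly $K$-affinoid spaces and rigid $K$-affinoid spaces and, as recalled above, is fully faithful and preserves fibre products. I shall use two structural inputs from \cite[\S1.6, \S3.3]{Berk90}: (i) for a strictly affinoid domain $V$ of a separated strictly $K$-analytic space $X$, the associated rigid space $V_0$ is an admissible open of $X_0$, and in this way analytic domains of $X$ correspond to admissible opens of $X_0$ --- and affinoid coverings to admissible affinoid coverings --- in both directions; (ii) $-_0$ preserves fibre products. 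A first consequence of (i) is that a property of $f$ which is local on the target, i.e.\ holds iff it holds for each $f^{-1}(V)\to V$ as $V$ ranges over a strictly affinoid covering of $Y$, transfers to and from $f_0$ once one knows the corresponding rigid property is local on the target in the same sense.

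I first treat finite morphisms. Finiteness is local on the target on both sides, so by (i) we may take $Y=\M(\mathcal{B})$, hence $Y_0=\rsp\mathcal{B}$. Then $f$ is finite iff $X=\M(\mathcal{A})$ with $\mathcal{A}$ module-finite over $\mathcal{B}$; conversely, if $f_0$ is finite then $X_0$ is affinoid, hence so is $X$ (full faithfulness together with the affinoid equivalence), say $X=\M(\mathcal{A})$ with $X_0=\rsp\mathcal{A}$, and $f_0$ finite is again the condition that $\mathcal{A}$ be module-finite over $\mathcal{B}$. So $f$ is finite iff $f_0$ is. A closed immersion is a finite morphism for which, locally on the target, the map $\mathcal{B}\to\mathcal{A}$ is in addition surjective; this extra condition is manifestly preserved and reflected by $-_0$, so closed immersions transfer in both directions. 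For open immersions I invoke (i) directly: $f$ is an open immersion iff it identifies $X$ with an open analytic subdomain of $Y$, which under the dictionary of (i) holds iff $f_0$ identifies $X_0$ with an admissible open of $Y_0$, i.e.\ iff $f_0$ is an open immersion. An immersion is, locally on $Y$, the composite of a closed immersion with an open immersion, and so transfers by combining the two previous cases. Finally, $f$ is separated iff the diagonal $\Delta_f\colon X\to X\times_Y X$ is a closed immersion; by (ii), $(X\times_Y X)_0=X_0\times_{Y_0}X_0$ and $(\Delta_f)_0=\Delta_{f_0}$, so the closed-immersion case shows $f$ is separated iff $f_0$ is.

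I expect the main obstacle to be input (i): one must make precise the dictionary between analytic domains of a separated strictly $K$-analytic space $X$ and admissible opens of $X_0$, and --- crucially for the ``reflection'' halves of the statement --- show that every admissible open, and every admissible covering, of $X_0$ arises from an analytic domain, resp.\ covering, of $X$. This is exactly the comparison of the two Grothendieck topologies carried out by Berkovich; granting it, the remainder is the bookkeeping above, and for the purposes of this paper we simply cite the result as \cite[Proposition 3.3.2]{Berk90}.
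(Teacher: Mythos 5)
There is no in-paper proof to compare against: the paper states this proposition purely as a citation to \cite[Proposition 3.3.2]{Berk90} and uses it as a black box. Your reconstruction is a sensible outline of how one would prove it, correctly isolating the two ingredients (full faithfulness of $-_0$ plus preservation of fibre products, and the dictionary between strictly analytic domains of $X$ and admissible opens of $X_0$), and the reductions for finiteness, closed immersion, and separatedness are sound. Two caveats, though. For immersions, the phrase ``locally on $Y$'' is misleading: an immersion factors globally through a single open subdomain of $Y$, not via a covering of $Y$; the argument still goes through because the factorization itself transfers across $-_0$ by the preceding two cases. More seriously, for open immersions your input (i) exchanges \emph{arbitrary} strictly analytic domains with admissible opens --- but an analytic domain of a Berkovich space need not be topologically open (affinoid domains typically are compact, not open). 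So in the reflection direction you obtain only that $X$ is an analytic domain of $Y$, not a priori an open one, and an extra argument is needed to see that the domain corresponding to an admissible open immersion of rigid spaces is genuinely open in $Y$. Closing that gap requires the finer comparison of topologies that Berkovich carries out under the separatedness and paracompactness hypotheses, and is precisely the content one does not wish to reproduce --- which is why the paper simply cites the result.
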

For a flat  formal $R$-scheme $\fX$ locally of finite type,
there is a unique strictly $K$-analytic space $X$ such that
$\fX_\rig \cong X_0.$
For simplicity of notation, we use the letter $\fX_\ber$ for this $X$.
In particular, a $K$-affinoid space $\rsp \A_K$ corresponds to the Berkovich Spectrum $\M (\A_K)$,
where $\M (\A_K)$ is the set of all bounded multiplicative seminorm on $\A_K$
equipped with the weak topology with respect to $\A_K$ and  the $G$-topology (cf. \cite[\S2, \S3]{Berk90}).
We note that $\M (\A_K)\subset X$ is closed but not necessarily open,
although $\rsp \A_K\subset X_0$ is a closed and open set.
That is, we regard the Raynaud generic fiber as the functor from the category of  flat formal $R$-schemes
locally of finite type
to the category of separated strictly $K$-analytic spaces.
By abuse of notation, we write $\cX_\ber$ for $(\cX_\f)_\ber$
for  a flat $R$-scheme $\cX$ locally of finite type.
\end{sss}
\begin{Def}\label{def of reduction}
  Let
  $\fX$
  be
 a flat formal $R$-scheme
  locally of finite type.
    Then we can consider \emph{the reduction map $\red_\fX : \fX_\ber \to \fX$}.
    Locally this map
    $\red_\fX|_{\M(\A_K)} : \M(\A_K) \to \spf \A=\spec \A_k$
    is defined as follows:
A point $x\in \M(\A_K)$ can be seen as a
multiplicative seminorm on $\A_K$ that is bounded by the equipped norm on $\A_K$. Since $\A$ is an admissible $R$-algebra, the restriction of
the equipped norm on $\A_K$ to $\A$ is bounded by 1.
Hence, the restriction of $x$ to $\A$ is also bounded by 1.
Then,
$$\mathfrak{p}_x:=\{f\in \A\ |\ |f(x)|<1\} \subset \A$$
is a prime ideal of $\A$. It is clear that $\mathfrak{p}_x \in \spec \A_k =\spf \A$.
Then we denote by $\red_\fX(x)$ the point corresponding to this prime ideal $\mathfrak{p}_x$.
If $\fX=\cX_\f$ for some flat $R$-scheme $\cX$ locally of finite type,
we write $\red_\cX$ instead of $\red_{\fX}$.
    In the author's previous work \cite{goto2020berkovich},
    the image $\red_\cX(x)$ by the reduction map is called
    the center of $x$.

\end{Def}
\begin{sss}
  Let
  $\fX$
  be
 a flat formal $R$-scheme
  locally of finite type.
    Then the reduction map
$\red_\fX : \fX_\ber \to \fX$ is
 anti-continuous
 and
 surjective.

Please refer to \cite[\S 2.4]{Berk90} for details.

\end{sss}
\begin{Def}
\label{affine}
Let $B$ be an real $n$-dimensional manifold.
An \emph{affine structure} (resp.   \emph{integral affine structure}) on $B$ is an atlas $\{(U_i,\psi_i)\}$ of $B$
consisting of coordinate charts $\psi_i:U_i\rightarrow \R^n$,
whose transition functions $\psi_i\circ\psi_j^{-1}$ lie in
${\rm Aff}(\R^n):=\R^n\rtimes {\rm GL}(\R^n)$ (resp. ${\rm Aff}(\Z^n):=\Z^n\rtimes {\rm GL}(\Z^n)$).
A pair of $B$ and an affine structure
 (resp. integral affine structure)
 on $B$
 is called an \emph{affine manifold} (resp. an  \emph{integral affine manifold}).
Further, $B$ is called an \emph{integral affine
manifold with singularities} (\emph{IAMS}, for short) if
$B$ is a $C^0$-manifold with an open set $B^{\rm sm} \subset B$ that has an
integral affine structure, and such that $Z:=B\setminus B^{\rm sm}$ is a locally
finite union of locally closed submanifolds of codimension $\geq 2$.
Here, $Z$ is called a \emph{singular locus} of $B$.
The pair of this integral affine manifold $B^{\rm sm}$ and $B$ is called \emph{IAMS structure} of $B$.
\end{Def}

\section{Degenerations of Kummer surfaces}
First, we introduce some important results from \cite{Kun} (cf. \cite{FC}).
\begin{sss}
Let $G$ be a \emph{semiabelian scheme} over $R$. That is, \emph{$G$ is a smooth separated
group scheme of finite type over $S$ whose geometric fibers are extensions of
Abelian varieties by algebraic tori}. We assume
that $G_\eta$ is Abelian variety. Let $\sL$ be a line bundle on $G$
such that $\sL_\eta$ is ample on $G_\eta$. Then we obtain \emph{the Raynaud extension}
\[0\to T\to \tilde{G} \xrightarrow{\pi} A \to 0\]
associated with $G$ and $\sL$, where $T$ is an algebraic torus, $A$ an Abelian
scheme, and $\tilde{G}$ a semiabelian scheme over $S$.
If the abelian part $A$ is trivial, $G$ is called \emph{maximally degenerated}.
We note that we need to choose such a line bundle $\sL$ to obtain this Raynaud extension.
However, this extension is independent of the choice of $\sL$.
The line bundle $\sL$ induces a line bundle $\tilde{\sL}$ on $\tilde{G}$.
We assume that all line bundles have cubical structures as well as
\cite[(1.7)]{Kun}.
In this paper, we shall use the categories $\DEG$ and $\DD$ introduced by
\cite{Kun}.
Each category is a subcategory
of ${\rm \bf DEG}_{\rm ample}$ and ${\rm \bf DD}_{\rm ample}$ as constructed in \cite{FC}, respectively.
In particular, there is an equivalence of categories
${\rm \bf M}_{\rm ample}: {\rm \bf DD}_{\rm ample} \to {\rm \bf DEG}_{\rm ample}$
(See \cite[Chapter III, Corollary 7.2]{FC}).
We denote $F_{\rm ample}$ by the inverse of this functor.
Originally, $F_{\rm ample}$ is a more naturally determined functor,
and its inverse, ${\rm \bf M}_{\rm ample}$, is the non-trivial functor.

Objects of the category $\DEG$ of split ample degenerations are triples $(G,\sL, \sM)$,
where $G$ is a semiabelian scheme over $S$ such that $T$ is a \emph{split torus} over $S$,
$\sL$ a cubical invertible sheaf on $G$ such that $\sL_\eta$ is ample on $G_\eta$,
 and $\sM$ a cubical ample invertible sheaf on $A$ such that
 $\tilde{\sL} = \pi^* \sM$.
In particular, $\sM$ is trivial when $G$ is maximally degenerated.
By definition of the algebraic torus,
 every ample degeneration $(G,\sL)$ becomes split after a finite extension of
 the base scheme $S$.

 On the other hand, objects of the category $\DD$ of split ample degeneration data
 are tuples
 \[(A, M, L, \phi, c, c^t, \tilde{G} , \iota, \tau, \tilde{\sL}, \sM, \lambda_A,\psi, a, b).
 \]
 Here, $M$ and $L$ are free Abelian groups of the same finite rank $r$, and $\phi : L\to M$
 is an injective homomorphism.
  Functions $a:L\to \Z$ and $b:L\times M \to \Z$  are
 determined by $\psi$ and $\tau$, respectively.
 We note that $M$ reflects the information
 of the Raynaud extension (or more precisely, its split torus part),
 $\phi$ reflects the information of polarization,
 $a$ and $b$ reflect the information of $G_\eta$-action.
 In particular,
 $(G,\mathscr{L})$ is called \emph{principally polarized} if
 the morphism $\phi$ induced by ${\rm \bf M}_{\rm ample}$ is an isomorphism.
 Since we will not use the rest in this paper,
 the rest is omitted.
 Please refer to \cite{Kun} for more details.

  We note that there is an equivalence of categories
 $F:\DEG \to \DD$ (cf. \cite[(2.8)]{Kun}). This functor is defined by
 the restriction of $F_{\rm ample}={\rm \bf M}_{\rm ample}^{-1} : {\rm \bf DEG}_{\rm ample} \to  {\rm \bf DD}_{\rm ample} $ to $\DEG$.

\end{sss}
 \begin{sss}
 The key idea of \cite{Kun} is to construct rational polyhedral cone decompositions
 that give us the relatively complete model as in \cite{CM_1972__24_3_239_0}.
 To construct them, we shall use the category $\cC$ introduced
 by \cite[\S3]{Kun} (cf. \cite{overkamp2021}).

Objects of the category $\cC$ are tuples $(M, L, \phi, a, b)$, where $M$ and $L$ are free
Abelian groups of the same finite rank, $\phi :L\to M$ is an injective homomorphism, $a: L\to \Z$
is a function with $a(0)=0$, and $b:L\times M \to \Z$ is a bilinear pairing such that $b(-,\phi(-))$
is symmetric, positive definite, and satisfies
\[
a(l+l')-a(l)-a(l')=b(l,\phi (l')).
\]

There is a natural forgetful functor
$\For : \DD \to \cC$.
This function extracts the information necessary to construct
rational polyhedral cone decompositions from the degeneration data $\DD$.
\end{sss}
\begin{sss}\label{base change}
    We set $S'=\spec R'$, where
  $R'$ is another cDVR and $\eta'$ is its generic point.
  Let $f:S'\to S$ be a finite flat morphism, let $\nu$ be the degree of
  $f^*: K=\str_{S,\eta}\hookrightarrow K'=\str_{S',\eta'}$.

  In fact, $\DEG$ and $\DD$ depend on the base field $K$.
  That is, $\DEG$ (resp. $\DD$) should have been written as ${\rm \bf DEG}_{{\rm ample},K}^{\rm split}$
  (resp. ${\rm \bf DD}_{{\rm ample},K}^{\rm split}$).
  In particular, these categories are not closed under base change along $f:S'\to S$.
  However, since we are dealing with degenerations after
  sufficient finite extension, these abbreviations do not cause any problem.

  On the other hand, $\cC$ does not depend on the base field $K$.
  Let us see what happens when we take the base change along $f:S'\to S$.

  Given $(G,\mathscr{L},\M)\in {\rm \bf DEG}_{{\rm ample},K}^{\rm split}$, let
  $(G',\mathscr{L}',\M')\in {\rm \bf DEG}_{{\rm ample},K'}^{\rm split}$
  be the base change
  of $(G,\mathscr{L},\M)$
  along $f:S'\to S$.
  If $\For(F(G,\mathscr{L},\M))=(M, L, \phi, a, b)\in\cC,$
  then $\For(F(G',\mathscr{L}',\M')) \cong (M, L, \phi, \nu \cdot a,\nu\cdot  b)$ (cf. \cite[(2.9)]{Kun}).
\end{sss}
\begin{sss}\label{act}
Let $H$ be a finite group acting on $(G,\mathscr{L},\M)\in \DEG$.
It means that we can regard each $h\in H$ as the
$S$-automorphism $$h: (G,\mathscr{L},\M)\to (G,\mathscr{L},\M)$$
and these morphisms are compatible in a natural way.
Further, we also define the action of $H$ on $F((G,\mathscr{L},\M)) \in \DD$
 (resp. $\For(F((G,\mathscr{L},\M)))\in\cC$).
  See \cite[(2.10)]{Kun} for details.
 Note that we assume that $H$ acts trivially on $S$ in this paper.
\end{sss}
\begin{sss}\label{how to act}
  Given an object $\For(F(G,\mathscr{L},\M))=(M,L, \phi, a, b)\in \mathcal{C}$ on which the finite
  group $H$ acts as \ref{act}, we obtain an action (from the left) of $H$ on $L$, and
  an action (from the right) of $H$ on $M$. We set $\Gamma:=L \rtimes H$ and
 $\tilde{M}:=M\oplus \Z$.
 Then we denote by $N$ (resp. $\tilde{N}$) the dual of $M$ (resp. $\tilde{M}$).
 Let $\langle -,-\rangle : \tilde{M}\times \tilde{N} \to \Z $ be \emph{the canonical pairing}.

  Now we define the action of $\Gamma$ on $\tilde{N}=N\oplus \Z$ via
  $$S_{(l,h)}((n,s)):=(n\circ h+sb(l,-), s),$$ as in \cite[p.181]{Kun}.
  As we will now explain, this action reflects the natural
  action of $\Gamma$ on $T_\eta =\spec K[M]$, where $T$ is a split torus part of $\tilde{G}$.
 At first,  we identify $\tilde{m}=(m,k)\in \tilde{M}$ with $t^k X^m\in K[M]$.
  In the proof of \cite[Lemma 3.7]{Kun},
  the action of $L$ on $T_\eta = \spec K[M]$
  induced by the natural action of $T_\eta$
  is defined as follows:
  $$l : \tilde{M} \to \tilde{M},  \ \  \ (m,s) \mapsto (m,b(l,m)+s) . $$
  We can easily verify that
  this action is dual to the action $S_{(l,\mathrm{Id})}$
  in the sense of $\langle l\cdot \tilde{m},\tilde{n}	\rangle
  =\langle \tilde{m},S_{(l,\mathrm{Id})}(\tilde{n})\rangle$.
  In the same way, we can easily check that the action of $h\in H=\{\pm 1\}$ on $T$ is dual to $S_{(0,h)}$.
  Hence, the action of $\gamma \in \Gamma$ on $T$ corresponds to $S_\gamma$ on $\tilde{N}$.

  In addition, we consider the function $\chi: \Gamma \times \tilde{N}_\R\to \R$ defined by
  $$\chi ((l,h), (n,s))=sa(l)+n\circ\phi\circ h^{-1}(l)$$
as in \cite[p.181]{Kun}.

  In $\tilde{N}_\R = N_{\R} \oplus \R,$ we have the cone
  $\mathscr{C}:=(N_{\R}\oplus\R_{>0})\cup\{0\}.$
  The cone $\sC$ is stable under the action of $\Gamma$.
  We shall consider a smooth $\Gamma$-admissible rational polyhedral cone
  decomposition $\Sigma :=\{\sigma_\alpha\}_{\alpha\in I}$
  which admits a $\Gamma$-admissible $\kappa$-twisted polarization
  function $\varphi\colon \mathscr{C}=\bigcup_{\alpha\in I} \sigma_{\alpha}\to \R$
for some $\kappa \in \N$.
Let us take a moment to recall these definitions.
\begin{Def}
A rational polyhedral cone
decomposition $\Sigma:=\{\sigma_\alpha\}_{\alpha\in I}$
of $\sC$
is said to be
\emph{$\Gamma$-admissible} if 
the above action of $\Gamma$ on $\sC$ 
causes a bijection from $I$ to itself
(that is, for any $\sigma_\alpha \in \Sigma$ and any $\gamma \in \Gamma,$ there exists $\beta\in I$ such that $\sigma_\beta=\gamma (\sigma_\alpha)$ in $\sC$.)
  and
  we can take a system of finitely many representatives $\{\sigma_\alpha\}$
  for the action of $\Gamma$ (that is,
  there are at most finitely many orbits).
  
For any cone $\sigma_\alpha \in \Sigma$, there exists  $l_1,\dots l_d \in \tilde{N}$
such that $$\sigma_\alpha=\sum_{i=1}^d\R_{\geq 0}l_i.$$
In particular, 
each $l_i\in \tilde{N}$ corresponds to a $1$-dimensional face of $\sigma_\alpha$.
Here, note that each $l_i\in \tilde{N}$ is not necessarily primitive.
The cone $\sigma_\alpha=\sum_i\R_{\geq 0}l_i$ is said to be a \emph{simplex} if $l_1,\dots, l_d\in \tilde{N}$ are linearly independent in $\tilde{N}$.
Further, the cone $\sigma_\alpha=\sum_i\R_{\geq 0}l_i$ is said to be \emph{smooth} if 
$\sigma_\alpha$ is a simplex and
$l_1,\dots, l_d\in \tilde{N}$ form a part of a basis of $\tilde{N}$.
Similarly, the decomposition 
$\Sigma$ is said to be \emph{simplicial} if each $\sigma_\alpha\in \Sigma$ is simplex.
Further, the decomposition
$\Sigma$ is said to be \emph{smooth} if each simplex $\sigma_\alpha \in \Sigma$ is smooth.

A function $\varphi\colon \sC=\bigcup_{\alpha\in I} \sigma_{\alpha}\to \R$
is called \emph{polarization function}
associated with $\Sigma$ if it satisfies the following properties:
\begin{itemize}
  \item $\varphi$ is continuous  function that satisfies $\varphi (\tilde{N}\cap  \sC) \subset \Z$
  \item $\varphi (rx)=r\varphi (x)$, for any $r\in \R_{\geq 0}$
  \item The restriction $\varphi |_{\sigma_\alpha}$ to each cone $\sigma_\alpha$ is a linear function
  \item
  $\varphi$ is strictly convex function for $\Sigma$.
  That is, for any $\sigma \in \Sigma$,
  there exists $r\in \N$ and $\tilde{m}\in \tilde{M}$
  such that $\langle \tilde{m}, \tilde{n} \rangle  \geq r\varphi (\tilde{n})$ for all $\tilde{n}\in \sC$ and
   $\sigma = \{\tilde{n} \in \sC \ |\  \langle \tilde{m}, \tilde{n} \rangle = r\varphi (\tilde{n}) \}$

\end{itemize}
A polarization function $\varphi\colon \sC \to \R$ is called \emph{$\kappa$-twisted $\Gamma$-admissible}
 for some $\kappa\in \N$
if it satisfies
$\varphi(x) - \varphi \circ S_{\gamma}(x) =\kappa \chi (\gamma, x)$
for all $\gamma\in \Gamma$, $x\in \sC$.
When $\kappa$ is not important, it is often referred to as \emph{$\Gamma$-admissible polarization} for short.
\end{Def}
We denote by $I^d \subset I$ the set of the indices corresponding to the $d$-dimensional cones of $\Sigma$.
We set $I^+:= \bigcup_{d>0} I^d$. Since $\Sigma$ is $\Gamma$-admissible, the group $\Gamma$ acts on each $I^d$.
  Overkamp combines various Theorems and Propositions in \cite{Kun} into  the following result  \cite[Theorem 2.2]{overkamp2021}:
\end{sss}
\begin{Thm}[{\cite{Kun}}, {\cite[Theorem 2.2]{overkamp2021}}]
  We set a semiabelian degeneration
   $(G, \mathscr{L}, \mathscr{M})\in \DEG$ and assume that $H$ acts on this object as \eqref{act}.
   We denote by $\A$ the N\'{e}ron model of the Abelian variety $A:=G_\eta$.
    Let $(M,L,\phi, a,b):=\For(F((G, \mathscr{L}, \mathscr{M})))$ and suppose we have a smooth $\Gamma$-admissible rational polyhedral cone decomposition $\Sigma :=\{\sigma_{\alpha}\}_{\alpha\in I}$ of
  $\sC \subset \tilde{N}_\R$.
  Furthermore we assume that this decomposition $\Sigma$ has the following properties:

  \begin{enumerate}[(a)]
    \item     There exists a $\kappa$-twisted  $\Gamma$-admissible polarization function $\varphi$ for the decomposition $\Sigma$.
    \item  The decomposition $\Sigma$ is \emph{semistable}. That is, the primitive element of any one-dimensional cone of the decomposition $\Sigma$ is of the form $(n,1)$ for some $n\in N.$
    \item   The cone $\sigma_T=\{0\}\times\mathbf{R}_{\geq0}$ is contained in the decomposition $\Sigma$.
    \item   For all $l\in L\backslash \{0\}$ and $\alpha\in I$, it holds that
    $$\sigma_{\alpha}\cap S_{(l,\mathrm{Id})}(\sigma_{\alpha})=\{0\}.$$
  \end{enumerate}

  Then there exists a projective snc model $\Pm$ of $A$ over $S$ associated to $\Sigma$
 and a line bundle $\mathscr{L}_\Pm$
 such that the following holds:
 \begin{enumerate}[(i)]
  \item
  The canonical morphism
  $\Pm^{\rm sm}\to \mathscr{A}$ is an isomorphism.
  \item
  The action of $H$ on $G=\mathscr{A}^0$ extends uniquely to $\Pm$, and the restriction of $\mathscr{L}_\Pm$ to $G$ is isomorphic to $\mathscr{L}^{\otimes\kappa}$,
  where $\A^0$ means the identity component of $\A$.
  \item
   Let $I^+_L$ be the set of orbits $I^+_L:=I^+/L$. Then the reduced special fiber of $\Pm$ has a stratification indexed by $I^+_L$. This stratification is preserved by the action of $H$, and the induced action of $H$ on the set of strata is determined by the action of $H$ on $I^+_L.$
\end{enumerate}
\label{model for AV}
\end{Thm}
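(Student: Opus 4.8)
The plan is to reduce the statement to Künnemann's original construction in \cite{Kun} plus the technical modifications encoded in Lemma \ref{key subdivision}, and then verify that the extra properties (i)--(iv) survive. Concretely, I would proceed as follows.

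First, I would observe that conditions (a)--(d) are precisely the hypotheses under which Künnemann's relatively complete model exists and can be compactified: the $\kappa$-twisted $\Gamma$-admissible polarization function $\varphi$ from (a) furnishes the line bundle data and guarantees projectivity; semistability (b) is what forces the snc (semistable) structure of the central fiber; the presence of the torus cone $\sigma_T$ in (c) ensures the compactification contains the closure of the split torus $T$; and (d) is the standard condition that guarantees the $L$-action on the relatively complete model is free, so that the quotient by $L$ — which produces the model $\Pm$ of $A=G_\eta/L$ — is well-behaved and étale locally trivial. Invoking \cite[\S 3]{Kun} (the construction of $\widetilde{P}$ as a relatively complete model, its polarization, and the descent along the $L$-action) yields a projective model $\Pm$ of $A$ over $S$ together with $\mathscr{L}_\Pm$. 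One then reads off (i) from Künnemann's identification of the smooth locus with the Néron model $\mathscr{A}$ (this is \cite[Theorem 3.10]{Kun} or its analogue; the smooth locus of the Mumford-type model is exactly $\mathscr{A}$), and (ii) from the fact that $\mathscr{L}_\Pm$ is built from $\varphi$ with twist $\kappa$, so its restriction to $G=\mathscr{A}^0$ is $\mathscr{L}^{\otimes\kappa}$ by construction.

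Next, for the $H$-equivariance, I would use that $\Gamma = L\rtimes H$ acts on $\Sigma$ and on $\widetilde{N}_\R$ compatibly with the action of $H$ on $(G,\mathscr{L},\mathscr{M})$ described in \ref{act}; since the cone decomposition and the polarization function are $\Gamma$-admissible (in particular $H$-equivariant modulo the $\kappa$-twist, which is absorbed by passing to $\mathscr{L}^{\otimes\kappa}$), the whole construction of $\widetilde{P}$ and its descent is $H$-equivariant, so the $H$-action on $G$ extends to $\Pm$, uniquely because $G$ is dense and $\Pm$ is separated. For (iii) and (iv): the strata of the special fiber of a toroidal/Mumford-type model are in bijection with the cones of $\Sigma$ of positive dimension, and modding out by $L$ gives the indexing set $I^+_L = I^+/L$; the residual $H = \Gamma/L$ action on $I^+_L$ is induced from the $\Gamma$-action on $I^+$, which is exactly the compatibility asserted. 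Smoothness of the one-dimensional strata (iv) follows because $\Sigma$ is a \emph{smooth} cone decomposition, so the toric charts attached to rays are smooth, hence the corresponding closed strata (which are the "first-order" orbit closures) are smooth over $k$.

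The main obstacle — and the reason Lemma \ref{key subdivision} is needed — is that Künnemann's construction as stated does not automatically produce a model that is simultaneously (1) an honest \emph{scheme} (not merely an algebraic space), (2) \emph{snc} rather than just toroidal, and (3) compatible with a \emph{given} $H$-action without further subdivision; a naive $\Gamma$-admissible subdivision may fail semistability or may fail condition (d) after refinement. So the real work is: take the cone decomposition supplied by hypothesis, apply Lemma \ref{key subdivision} to refine it $\Gamma$-equivariantly while preserving (a)--(d), and then check that refinement does not destroy the projectivity or the stratification bijection — i.e. that the polarization function extends to the refinement (possibly after increasing $\kappa$) and that the new strata still biject with $I^+_L$ for the refined $I$. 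I expect the bulk of the proof to be this bookkeeping: tracking how $\varphi$, the $\Gamma$-action, and the four conditions behave under the subdivision of Lemma \ref{key subdivision}, and invoking \cite{Kun} and \cite{overkamp2021} for the parts that are formally identical to the non-equivariant case.
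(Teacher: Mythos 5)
The paper does not prove this theorem at all: it is stated verbatim as a citation, preceded by the sentence ``Overkamp combines various Theorems and Propositions in \cite{Kun} into the following result \cite[Theorem 2.2]{overkamp2021}.'' Your first two paragraphs are a broadly reasonable reconstruction of how Künnemann's machinery (relatively complete model $\tilde{\mathscr{P}}$ from a $\Gamma$-admissible polarized decomposition, formal quotient by $L$, descent of the ample sheaf via $\varphi$, identification of the smooth locus with the N\'{e}ron model, stratification by cones mod $L$, smoothness of one-dimensional strata from smoothness of $\Sigma$) underlies this statement, and that sketch is compatible with what Künnemann and Overkamp actually do. One small slip: you write $A = G_\eta/L$, but in the paper's notation $A = G_\eta$ is already the abelian variety; the quotient by $L$ happens at the level of the formal/rigid uniformization ($\Pm_\f \cong \Pt_\f/L$), not on $A$ itself.

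The genuine error is in your third paragraph. You claim Lemma \ref{key subdivision} is ``the real work'' of proving Theorem \ref{model for AV} and that one must ``apply Lemma \ref{key subdivision} to refine'' the hypothesized $\Sigma$. This inverts the paper's logical order. Theorem \ref{model for AV} takes a decomposition $\Sigma$ satisfying (a)--(d) as a \emph{hypothesis} and produces a model \emph{associated to that $\Sigma$}; if you refined $\Sigma$, the index set $I$ changes and the stratification promised in (iii) would be indexed by the refined set, contradicting the conclusion. Lemma \ref{key subdivision} is stated and proved \emph{after} Theorem \ref{model for AV}; it refines Proposition \ref{decomp1} (existence of a $\Sigma$ satisfying (a)--(d)) so as to secure the extra conditions (e)--(g), which the paper needs later for the covering arguments in \S 5, not for the existence of $\Pm$. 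In short, Theorem \ref{model for AV} is self-contained given its hypotheses and is imported wholesale from \cite{Kun}/\cite{overkamp2021}; Lemma \ref{key subdivision} plays no role in it, and the ``bookkeeping'' you describe (tracking $\varphi$, $\kappa$, and the four conditions through a refinement) would prove a different statement about a different decomposition.
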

\begin{sss}\label{Pt}
  Let us discuss $\Pm$, which appears in Theorem \ref{model for AV}.
  For each cone $\sigma\in \Sigma$, we define the affine scheme $U_\sigma := \spec R[\sigma^\vee \cap \tilde{M}]$,
  where  $\sigma^\vee:= {\rm Hom}_{\rm monoid}( \sigma, \R_{\geq 0})$ and
  we identify $\tilde{m}=(m,k)\in \tilde{M}$ with $t^k X^m\in K[M]$.
  Then we can define $\Pt$ by glueing  these  $U_\sigma$ together as in \cite[1.13]{Kun}.
  In particular, we obtain the toroidal embedding
  $T_\eta =\spec k[M]\hookrightarrow \Pt$ as in \emph{loc.cit}.
  This $\Pt$ is called the \emph{toroidal compactification} of $T_\eta = \spec K[M]
  $ over $R$ associated with $\Sigma$.
  Further, the cone $\sigma_T$ induces the embedding $T_\eta \hookrightarrow T=U_{\sigma_T}=\spec R[M]$.
  It implies that the troidal embedding $T_\eta \hookrightarrow \Pt$
  extends to a $T$-equivariant embedding $T\hookrightarrow \Pt$.
   The special fiber of $\Pt$ is a reduced divisor with strict normal crossings on $\Pt$ and has a stratification indexed by $I^+$.

\emph{
If $(G, \mathscr{L}, \mathscr{M})\in \DEG$ is maximally dagenerated, then the above $\Pm$ of Theorem \ref{model for AV}
satisfies $\Pm_\f \cong \Pt_\f / L$.
}
Then, this $\Pt$ is also called relatively complete model as in \cite{CM_1972__24_3_239_0}.
In general, the above $\Pm$ is constructed by taking a
contraction product  $\tilde{G}\times^T\Pt$, which we do not use
in this paper. See \cite[\S3.6]{Kun} for the details.
\end{sss}
\begin{sss}\label{kulikov for Abelian}
  In \cite[Theorem 5.1.6]{Halle2017MotivicZF}, they proved this $\Pm$ is a Kulikov model of $A$
  (cf. \cite[Corollary 2.8]{overkamp2021}).
\end{sss}

\begin{sss}\label{action}
  For the tuple $(M,L, \phi, a,b):=\For(F((G, \mathscr{L}, \mathscr{M})))$,
  $b$ gives the injective homomorphism $\tilde{b}: L\to N=M^\vee$ defined by $\tilde{b}(l)=b(l,-)$.
  We identify $L$ with $\tilde{b}(L)$. That is, we regard $L$ as the sublattice of $N$.
  As we see before, $\Gamma$ act on $\tilde{N}$ as follows:
    $$S_{(l,h)}((n,s))=(n\circ h+s\tb(l), s)$$
    In particular,
      $$S_{(l,h)}((n,1))=(n\circ h+\tb(l), 1)$$
\end{sss}

\begin{sss}
K\"{u}nnemann proved the existence of the cone decomposition $\Sigma$
which satisfies the assumption of Theorem \ref{model for AV} as follows:
\end{sss}
\begin{Prop}[{\cite[Proposition 3.3 and Theorem 4.7]{Kun}}]
We set the tuple  $(G, \mathscr{L}, \mathscr{M})\in\DEG$ ,
and assume that the finite group $H$ acts on this object.
 Let $(M,L, \phi, a,b):=\For(F((G, \mathscr{L}, \mathscr{M}))).$
 After taking a base change along $f: S'\to S$ as in \eqref{base change} if necessary,
 there exists a smooth rational polyhedral cone
 decomposition $\Sigma := \{\sigma_{\alpha}\}_{\alpha\in I}$
 which has the properties (a)-(d) listed in
 Theorem \ref{model for AV}. \label{decomp1}
\end{Prop}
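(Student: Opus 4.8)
The plan is to reduce the statement to a combinatorial construction on the lattice $N$ equipped with the positive definite form attached to $b(-,\phi(-))$, following Mumford's relatively complete model technique \cite{CM_1972__24_3_239_0} in the $\Gamma$-equivariant form of \cite[\S3--4]{Kun}. First I would identify $L$ with the finite-index sublattice $\tb(L)\subset N$ as in \ref{action}, and use the relation $a(l+l')-a(l)-a(l')=b(l,\phi(l'))$ together with positive definiteness of $b(-,\phi(-))$ to extend $a$ over $\R$ to a strictly convex quadratic function $\bar a\colon N_\R\to\R$. Taking in $N_\R\oplus\R$ the convex hull of $\{(\tb(l),s):l\in L,\ s\ge a(l)\}$, its lower boundary is the graph of a convex, $\tb(L)$-interpolating piecewise linear function $\psi\colon N_\R\to\R$ whose maximal domains of linearity form a locally finite, $\tb(L)$-periodic decomposition $\mathfrak D$ of $N_\R$ into bounded lattice polytopes — the Delaunay decomposition of the form. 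Since $H$ acts on $L$ and on $M$ compatibly with $a$, $b$ and $\phi$, both $\mathfrak D$ and $\psi$ are invariant under the height-one action $S_{(l,h)}(n,1)=(n\circ h+\tb(l),1)$ of $\Gamma=L\rtimes H$ recalled in \ref{action}; in particular $0=\tb(0)$ is a vertex of $\mathfrak D$.

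Next I would cone off: placing $\mathfrak D$ at height $1$, the family $\Sigma_0:=\{\R_{\ge0}\cdot(\delta\times\{1\}):\delta\in\mathfrak D\}\cup\{\sigma_T\}$ is a rational polyhedral decomposition of $\sC\subset\tilde N_\R$, and since $0$ is a vertex of $\mathfrak D$ the ray $\sigma_T=\{0\}\times\R_{\ge0}$ is one of its cones, which is (c). On the height-one slice $S_{(l,h)}$ acts as translation by $\tb(l)$ composed with $h$, so $\Gamma$ permutes the cones of $\Sigma_0$, and periodicity and local finiteness of $\mathfrak D$ give finitely many orbits, so $\Sigma_0$ is $\Gamma$-admissible. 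I would then put $\varphi\colon\sC\to\R$, $\varphi(n,t):=t\,\psi(n/t)$ for $t>0$ and $\varphi(0):=0$: by construction it is continuous, positively homogeneous of degree one, linear on each cone of $\Sigma_0$ and strictly convex for $\Sigma_0$, and after a sufficiently divisible base change along some $f\colon S'\to S$ as in \eqref{base change} — replacing $(a,b)$ by $(\nu a,\nu b)$ — it becomes integral on $\tilde N\cap\sC$, hence a polarization function. Finally, because $\psi$ interpolates the quadratic $\bar a$ on a $\tb(L)$-periodic set and $\bar a(n+\tb(l))-\bar a(n)$ is affine-linear in $n$, the translation behaviour of $\psi$, together with its $H$-variant, is precisely the cocycle identity $\varphi(x)-\varphi(S_\gamma(x))=\kappa\,\chi(\gamma,x)$ for a suitable $\kappa\in\N$, which gives (a).

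It remains to upgrade $\Sigma_0$ to a smooth decomposition that in addition is $L$-free, since $\Sigma_0$ satisfies neither: a Delaunay cell can share a face with a $\tb(L)$-translate of itself, so (d) fails, and the cones over lattice polytopes need not be unimodular. I would refine $\Sigma_0$ $\Gamma$-equivariantly in two stages leaving (a)--(c) intact. For (d), I would pass to a $\Gamma$-equivariant lattice refinement of the height-one slice whose cells are small enough that each one injects into $N_\R/\tb(L)$, forcing $\delta\cap(\delta+\tb(l))=\emptyset$ for all $l\neq0$; here a sufficiently divisible base change as in \eqref{base change} is again used, because it makes $\tb(L)\subset N$ proportionally sparser and thus provides enough lattice points of $N$ inside a $\tb(L)$-fundamental domain to build such a fine refinement. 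For (b), I would then take a $\Gamma$-equivariant toric resolution of the resulting fan, carried out so that every new ray stays primitive at height one, so that semistability is retained; when $\mathrm{rk}\,L\le 2$ — the case of abelian and Kummer surfaces — this last point is automatic, since a lattice triangulation of a lattice polygon is unimodular, so one simply triangulates the height-one cells through their existing lattice points of $N$. I expect this step to be the main obstacle: carrying out the $L$-freeness refinement and the smoothing simultaneously and $\Gamma$-equivariantly while keeping the primitive generators at height one and re-establishing that a modification of $\varphi$ is strictly convex and $\kappa$-twisted for the final fan — in effect, equivariantly resolving a polarized toric singularity without moving any ray off height one — which is the technical heart of \cite[\S3--4]{Kun}.
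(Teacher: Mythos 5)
Your route is a genuine alternative to the paper's. The paper, following K\"unnemann's proof of \cite[Proposition 3.3]{Kun}, begins with the $1$-twisted $\Gamma$-admissible polarization function $\varphi(\tilde n)=\min_{l\in L}\chi((l,\mathrm{Id}),\tilde n)$ and the decomposition of $\sC$ by its domains of linearity (cones over a Voronoi-type decomposition of $N_\R$ in which $\sigma_{\{0\}}$ is a maximal cone), and then refines in three stages (see (\ref{sketch of subdivision})): a subdivision of $\sigma_{\{0\}}$ trivializing the $H$-stabilizers, a further $\Gamma$-equivariant subdivision arranging (c), (d) while recovering a $\kappa$-twisted polarization function via \cite[I.2, Theorem 11]{kempf1973toroidal}, and a final smoothing via the semistable reduction theorem \cite[II.2, Theorem 11]{kempf1973toroidal}. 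You instead start from the cones over the Delaunay decomposition of $\tb(L)\subset N_\R$, which is the polar of K\"unnemann's initial decomposition and has the pleasant feature that $\sigma_T$ is automatically a cone, so (c) holds from the outset. The two starting fans really differ: for $L=\Z$, $\phi=2$, $a(l)=l^2$, the height-one cells of $\min_l(l^2+2nl)$ are $[k-\tfrac{1}{2},k+\tfrac{1}{2}]$ whereas the Delaunay cells are $[k,k+1]$. Either works as a launch pad for the same refinement program, and your geometric argument for (d) (sparsify $\tb(L)$ by a base change so that small cells inject into $N_\R/\tb(L)$) is a nice concrete substitute for the paper's appeal to ``we consider a subdivision that satisfies (c), (d).''

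Two caveats. First, there is a sign error: with the paper's conventions a polarization function is bounded \emph{above} by linear forms on each cone and the twist $\kappa$ must be positive. Your $\psi$ is a \emph{lower} convex envelope, hence convex, and the induced $\varphi(n,t)=t\psi(n/t)$ satisfies $\varphi(x)-\varphi(S_{(l,\mathrm{Id})}(x))=-\chi((l,\mathrm{Id}),x)$, i.e.\ $\kappa=-1$; replacing $\psi$ by $-\psi$ (the upper concave envelope of $\{(\tb(l),-a(l))\}$) fixes both issues without changing the cell decomposition. Second, and more substantively, you correctly flag the remaining obstacle --- carrying out a $\Gamma$-equivariant, semistable, $L$-free refinement while \emph{re-establishing a $\kappa$-twisted polarization function} --- but you do not close it: your unimodular-triangulation observation handles (b) for $\mathrm{rk}\,L\le2$, yet says nothing about producing an integral, strictly convex, $\kappa$-twisted $\varphi$ on the refined fan. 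This is precisely what \cite[Theorem 4.5--4.7]{Kun}, via the two applications of \cite[Theorem 11]{kempf1973toroidal}, supplies, and the paper's own proof simply cites it. Your writeup should do the same explicitly rather than leaving the ``re-establishing'' claim open as a gap.
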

\begin{sss}\label{sketch of subdivision}
Now we recall K\"{u}nnemann's proof of the above Proposition \ref{decomp1}.
Please refer to \emph{loc.cit.} for more details. We consider the function $\varphi : \sC \to \R$ defined by
$$\tilde{n}\mapsto \min_{l\in L} \chi (l, \tilde{n}),$$
where $\chi (l, \tilde{n})$ means $\chi ((l,{\rm Id}), \tilde{n})$.
This $\varphi$ gives the decomposition $\Sigma=\{\sigma_\alpha\}$ defined as follows:
Let $\alpha=\{\alpha_i\}$ be a finite set of $L$. For such an $\alpha$, we consider
$$\sigma_\alpha:=\{\tilde{n} \in \sC \ |\ \varphi (\tilde{n})=\chi(\alpha_i, \tilde{n}) \ {}^\forall \alpha_i\in \alpha\}.$$
Here, for two finite set $\alpha$ and $\beta$ of $L$,
we define $\alpha \sim \beta$  by $\sigma_\alpha=\sigma_\beta$.
It is an equivalence relation.
Let $I$ be the set of equivalence classes of finite set of $L$.
Then $\Sigma:=\{\sigma_\alpha\}_{\alpha \in I}$ is a desired one.

Now $\varphi$ is a $1$-twisted polarization function associated with this $\Sigma$ as in \cite[Proposition 3.2]{Kun}.
In particular, it holds that $S_{(l,h)}(\sigma_\alpha)=\sigma_{h(\alpha)-l}$.
Now we consider the cone
$$\sigma_{\{0\}}=\{\tilde{n} \in \sC \ |\ \varphi(\tilde{n})=\chi (0,\tilde{n})=0\}.$$
It is clear that $\sC= \bigcup S_l (\sigma_{\{0\}})$.

\emph{First step} :
For this cone $\sigma_{\{0\}}$, we can subdivide it and obtain an $H$-invariant finite cone decomposition $\{\tau_\beta\}$ of $\sigma_{\{0\}}$
 such that each
cone $\tau_\beta$ is a simplex and the stabilizer of $\tau_\beta$ in $H$ acts trivially on $\tau_\beta$.
Further we can subdivide the whole $\Sigma$ by transporting
the above subdivision on $\sigma_{\{0\}}$ via $L$-action on $\sC$ and
 obtain an $H$-invariant cone decomposition $\{\tau_\alpha\}$ of $\sC$.
In addition, we can modify the polarization function $\varphi$ and obtain a $1$-twisted polarization function for
this subdivision $\{\tau_\alpha\}$ after replacing $K$ by a finite extension.

\emph{Second step} :
We choose a system $\{\tau_1,...,\tau_n\}$ of representatives for the action of $\Gamma$ on
the decomposition $\{\tau_\alpha\}$.
According to \cite[I.2, proof of Theorem 11]{kempf1973toroidal},
\emph{for any subdivision $\Sigma_i$ of each $\tau_i$}, there is a subdivision of the subdivision $\Sigma_i$
such that  it has a $\kappa$-twisted polarization function on $\tau_i$ for sufficiently large $\kappa\in \N$.
In the same way as above, we can extend these subdivisions to the whole via $L$-action.
Further, we can modify the polarization function on $\sC$ and obtain a $\kappa$-twisted polarization function for this subdivision $\Sigma'$
after replacing $K$ by a finite extension.
Hence, we consider a subdivision that satisfies (c), (d) to obtain a subdivision that satisfies (a), (c), (d).

\emph{Third step} :
We choose a system $\{\tau_1,...,\tau_n\}$ of representatives for the action of $\Gamma$ on
the decomposition $\Sigma'$.
By using the semistable reduction theorem \cite[II.2, proof of Theorem11]{kempf1973toroidal}, we can subdivide each $\tau_i$ so that
the resulting decomposition $\Sigma''$ is smooth.
In addition, we can obtain a $\kappa'$-twisted polarization function for this subdivision $\Sigma''$
after replacing $K$ by a finite extension.
Hence, the desired decomposition is constructed.
$\square$
\end{sss}

\begin{sss}
Let $B$ be a topological space endowed with a simplicial complex structure.
We denote by $\Sigma := \{\sigma_\alpha\}_{\alpha \in I}$ the set of all faces of $B$.
Let $\sigma^\circ$ be the relative open set of $\sigma \in \Sigma$.
We define the open star $\Star (\sigma_\alpha)$ of $\sigma_\alpha \in \Sigma$ as follows:
 $$\Star (\sigma_\alpha):= \bigcup_{\beta \succ \alpha} \sigma_\beta^\circ ,$$
 where $\beta \succ \alpha$ means that $\sigma_\alpha$ is a face of $\sigma_\beta$.
 Then $\Star(\sigma)$ is a open set of $B$. In particular, $\{\Star(\sigma_\alpha)\}_{\alpha \in I}$ is a open cover of $B$.
\end{sss}
\begin{sss}\label{dual intersection1}
  The decomposition $\Sigma:= \{\sigma_{\alpha}\}_{\alpha\in I}$ of $\sC$ as Theorem \ref{model for AV} gives
  the smooth rational polyhedral decomposition $\ol{\Sigma}$ in $N_\R$ obtained by intersectiong the cones in $\Sigma$ with $N_\R \times \{ 1\}$.
  Let $\overline{\sigma}_\alpha \in \ol{\Sigma}$ be the intersection of $\sigma_\alpha$ with $N_\R \times \{ 1\}$.
  Then this decomposition $\ol{\Sigma}= \{ \overline{\sigma}_\alpha \}_{\alpha \in I}$ gives a simplicial complex structure to $N_\R$. Moreover the dual intersection complex $\Delta (\Pt) $ of $\Pt_k$ coincides with $\ol{\Sigma}$ as we see in (\ref{Pt}).
  Theorem \ref{model for AV} implies that the dual intersection complex $\Delta(\Pm)$ of $\Pm_k$ has
  the simplicial complex structure of $\ol{\Sigma}/L:=\{ \overline{\sigma}_\alpha \}_{\alpha \in I^+_L}$.
\end{sss}
\begin{sss}
  To make it easier to see the covering map, which is the key to this paper
  and which we will look at later, we refine Proposition \ref{decomp1}.
\end{sss}

\begin{Lem}\label{fixed locus}
  Let $F$ be the fixed locus of $H$-action on $N_\R/L$.
  Then $N_\R/L$ has a simplicial complex structure such that any 0-vertex of $N_\R/L$ is included in $N_\Q/L$ and $F$ is a compact sub simplicial complex.

\end{Lem}
\begin{proof}
It follows from Proposition \ref{decomp1} that $N_\R/L$ has a simplicial complex structure such that
the canonical projection $N_\R\twoheadrightarrow N_\R /L$ is a simplicial map
and any 0-simplex in $N_\R/L$ is included in $N_\Q/L$.
We show that $F$ is compact and has a simplicial complex structure such that any 0-simplex of $F$ is
included in $N_\Q/L$. Note that $F$ might not be connected.
Then the assertion follows from
\cite[2.12 Addendum]{RS82}.

From now on, we prove the claim.
By definition, $F$ is denoted as follows: $$F=\bigcup_{h\in H\setminus \{0\}} F_h,$$
where $F_h:=\{x\in N_\R/L\ |\ x=h(x) \}$.
Under the setting as we considered in \eqref{how to act}, $H$ acts on $N_\R/L$
via
$H\to \left( {\rm GL}(L) \ltimes L \right) \cap \left( {\rm GL}(N)\ltimes N\right)$.
In other words, $h$ is determined by $\tilde{h}\in \left( {\rm GL}(L) \ltimes L \right) \cap \left( {\rm GL}(N)\ltimes N\right)$ via the canonical projection $N_\R\to N_\R/L$.
Here, we identify $h$ with $\tilde{h}$.
In particular, denote $h:N_\R\to N_\R$ by $h(y)=Ay+b$, where $A\in {\rm GL}(L)\cap {\rm GL}(N)$ and $b\in L(\subset N)$.
Set $g:N_\R\to N_\R$ as $g(y):=h(y)-y$.
For any $y_1, y_2\in N_\R$, $g(y_1+y_2)=g(y_1)+Ay_2-y_2$ holds. It implies that
 $g(y)\in L$ is equivalent to $g(y+a)\in L$ for some $a\in L$.
 Taka a basis $\{l_i\}$ of $L$ in $N_\R$ and set a fundamental domain $D$ of $N_\R/L$ as follows: $$D:=\sum_{i=1}^{\dim N} [0,1]\cdot l_i\subset N_\R.$$
 Then it holds that
 $$F'_h:=\{y\in D\ |\ g(y)\in L\} \twoheadrightarrow F_h$$
by the canonical projection $N_\R\to N_\R/L$.
Since $L$ is discrete in $N_\R$ and $g(D)$ is compact, $V:=g(D)\cap L$ is a finite set.
Here, we can denote $F'_h$ by $$F'_h=\bigcup_{v\in V}F'_{h,v},$$
where $F'_{h,v}:=\{ y\in D\ |\ g(y)=v\}$.
Since $g$ is an integral affine map (or $g\in {\rm Hom}(N,N)\ltimes N$), $F'_{h,v}$ is a closed set of
a subaffine space in $N_\R$ with rational slopes for the coordinates of $N_\R$ containing some point in $N_\Q$.
It implies that $F'_{h,v}$ has a simplicial structure such that each vertex of $F'_{h,v}$ is in $N_\Q$.
In particular, the inclusion $F'_{h,v} \to N_\R$ is a piecewise linear map.
Since the canonical projection $N_\R\twoheadrightarrow N_\R /L$ is a simplicial map,
the composition $F'_{h,v} \to N_\R \twoheadrightarrow N_\R/L$ is also a piecewise linear map.
By \cite[2.14 Theorem]{RS82}, there are subdivisions of $N_\R /L$ and $F'_{h,v}$ such that
$F'_{h,v} \to N_\R /L$ is a simplicial map and
any 0-simplex of the subdivision of $N_\R /L$ is included in $N_\Q /L$.
Hence, the image $F_{h,v}$ of $F'_{h,v}$ has
a simplicial structure induced by the simplicial map $F'_{h,v} \to N_\R /L$.
Here, any intersection (as a cell complex) between two cell complexes is also a cell complex.
Since $F'_{h,v}$ is an intersection of $D$ and
some subaffine space in $N_\R$ with rational slopes intersecting $N_\Q$, for any $F_{h,v}$ and $F_{h',v'}$,
it holds that any 0-cell of the cell complex of the intersection  is included in $N_\Q/L$.
Hence, the union of $F_{h,v}$ and $F_{h',v'}$ is a cell complex such that all 0-cells are in $N_\Q/L$ by gluing together along the intersection cell complex.
It is well-known that any cell complex can be subdivided to a simplicial complex without introducing any new vertices.
That is, the union of $F_{h,v}$ and $F_{h',v'}$ has a simplicial complex structure such that all 0-simplexes are in $N_\Q/L$.
Since $F$ is a finite union of simplicial complexes,
more precisely $$F=\bigcup_{h\in H\setminus \{e\}}\bigcup_{v\in V} F_{h,v},$$
then it follows inductively that $F$ has a simplicial structure such that any vertex of $F$ is included in $N_\Q /L$.
Besides, since $F_{h,v}$ is compact, so is $F$.
Hence, the claim follows.
\end{proof}
\begin{Lem}\label{key subdivision}
  Let $F$ be the fixed locus of $H$-action on $N_\R/L$.
  Let $\tilde{F}$  be the inverse image of the fixed locus $F$ by the quotient map $N_\R\to N_\R/L$.
  After taking a base change along $f: S'\to S$ as in \eqref{base change} if necessary, there exists a smooth rational polyhedral cone decomposition
  $\Sigma = \{\sigma_{\alpha}\}_{\alpha\in I}$
  which has not only the properties (a)-(d) listed in Theorem \ref{model for AV}
  but also the following (e)-(g).
  \begin{enumerate}
  \item[(e)]
    For all $l\in L\backslash \{ 0\}$ and $\alpha\in I_+$, we have
  $$\Star(\overline{\sigma}_\alpha)\cap S_{(l,\mathrm{Id})}(\Star(\overline{\sigma}_\alpha))=\emptyset.$$
  \item[(f)]  $F$ has a simplicial structure $\mathscr{F}$ such that $F$ is a subcomplex of the complex $\overline{\Sigma}/ L$ as appeared in \eqref{dual intersection1}. In particular,
  for any simplex $\tau$ of the induced simplicial structure $\tilde{\mathscr{F}} $ on $\tilde{F}$,
  a cone generated by $(\tau, 1)\subset \tilde{N}_\R:=N_\R \times \R$
  corresponds to some index in $I$.
We denote by $I_{\rm sing}\subset  I_+$ the set of indices corresponding to  $\tilde{\mathscr{F}}$.

  \item[(g)]   For all $\gamma \in \Gamma \backslash \{ 0\}$ and $\alpha \in I_+\setminus  I_{\rm sing}$, we have
  $$\Star(\overline{\sigma}_\alpha)\cap S_\gamma(\Star(\overline{\sigma}_\alpha))=\emptyset.$$
\end{enumerate}
\end{Lem}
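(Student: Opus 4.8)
The plan is to take the decomposition $\Sigma$ produced by Proposition \ref{decomp1} (which already has (a)--(d)) and refine it further by applying the three-step subdivision machinery recalled in \ref{sketch of subdivision}, at each step arranging that the new conditions (e), (f), (g) hold, and checking that they survive the later subdivisions. The key point is that (e), (f), (g) are all conditions that are \emph{stable under further subdivision}: once a star of a cone is small enough to be disjoint from its $\Gamma$-translates (or $L$-translates), any subdivision only shrinks stars, so disjointness is preserved; and once a ray $(n,1)$ with $n\in\tilde F$ is a cone of $\Sigma$, it remains a cone (a vertex of $\ol\Sigma$) in any subdivision. So the strategy is: first achieve (f), then achieve (e) and (g) by taking a sufficiently fine barycentric-type subdivision, then run the K\"unnemann smoothing/polarization steps, which cannot destroy (e)--(g).

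First I would handle (f). The fixed locus $F$ of the $H=\{\pm1\}$-action on $N_\R/L$ is a union of affine subspaces (in the Kummer case, the $2$-torsion points, so $\tilde F=\frac12 L$, a discrete set); in any case $\tilde F$ is $L$-periodic and $H$-invariant. I would subdivide $\sigma_{\{0\}}$ (and transport via $L$ as in the first step of \ref{sketch of subdivision}) so that every point of $\tilde F\cap \overline{\sigma_{\{0\}}}$ becomes a vertex of the decomposition of $N_\R\times\{1\}$ — equivalently, the ray through $(n,1)$ is a $1$-cone of $\Sigma$ for each such $n$. Since $\tilde F\cap\overline{\sigma_{\{0\}}}$ is finite (as $\overline{\sigma_{\{0\}}}$ is a compact fundamental-domain-type region and $\tilde F$ is discrete in the Kummer situation, or more generally one uses that only finitely many $\Gamma$-orbits of cones occur), this is a finite $H$-equivariant subdivision, and it can be done compatibly with the simplex/trivial-stabilizer requirement of the first step. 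This gives (f), with $I_{\rm sing}$ the resulting set of indices.

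Next, (e) and (g): these say the open star of $\overline{\sigma}_\alpha$ is disjoint from all its nontrivial $L$-translates (for (e), all $\alpha\in I_+$) resp. all its nontrivial $\Gamma$-translates (for (g), all $\alpha\notin I_{\rm sing}$). Note (e) is exactly the "open star" strengthening of condition (d) ($\sigma_\alpha\cap S_{(l,\mathrm{Id})}\sigma_\alpha=\{0\}$), and (g) is the analogous strengthening involving the $H$-part as well, but it is \emph{allowed to fail} precisely on the cones in $I_{\rm sing}$, which is necessary since those are $H$-fixed so their stars meet their own $H$-images. The standard way to force stars to be small is to pass to the second barycentric subdivision (or iterate barycentric subdivision): after barycentric subdivision the open star of any simplex lies in an arbitrarily small neighborhood of that simplex, and since the $L$-action on $N_\R$ is free and properly discontinuous with compact quotient, and $\Gamma$ acts on $N_\R$ properly discontinuously with the $H$-stabilizers supported exactly on $\tilde F$, a uniform injectivity radius exists away from $\tilde F$; one barycentric subdivision refining enough relative to that radius makes (e) hold everywhere and (g) hold on the complement of a neighborhood of $\tilde F$, which by (f) is $I_+\setminus I_{\rm sing}$ after possibly one more subdivision. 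Barycentric subdivision is $H$-equivariant and transports via $L$, so we stay in the admissible class.

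Finally I would run the \emph{second} and \emph{third} steps of \ref{sketch of subdivision} — producing a $\kappa$-twisted polarization function and then a smooth decomposition — noting that those steps only subdivide cones and (after a finite base change) adjust $\varphi$, and therefore preserve (e), (f), (g) by the stability-under-subdivision observation above; in particular the rays in $I_{\rm sing}$ are untouched, and stars only get smaller. The resulting $\Sigma$ has (a)--(g), which is the claim.

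The main obstacle I expect is making the "uniform smallness of stars" argument precise in a way that is genuinely compatible with all of (a)--(d) \emph{and} with the trivial-stabilizer/simplex conditions that K\"unnemann's first step needs — i.e. verifying that a single subdivision can be chosen $\Gamma$-admissibly (finitely many orbits), $H$-equivariantly, simplicial, with trivial stabilizers, \emph{and} fine enough for the star-disjointness; bookkeeping the interaction between the $H$-fixed locus $\tilde F$ (where (g) must be waived) and the requirement (f) that $\tilde F$ sit on the $1$-skeleton is the delicate part. Everything else is a routine, if lengthy, application of the toroidal/Kempf–Knudsen–Mumford–Saint-Donat subdivision lemmas already invoked in \ref{sketch of subdivision}.
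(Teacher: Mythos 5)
Your proposal is correct and follows essentially the same route as the paper: take a decomposition with (a)--(d) from Proposition~\ref{decomp1}, insert a further $\Gamma$-equivariant refinement in the second step of \eqref{sketch of subdivision} to force (e)--(g), and then observe (as you do explicitly via the stability-under-subdivision argument, and as the paper leaves implicit) that the remaining polarization and smoothing steps only refine further and hence preserve (e)--(g). The paper compresses your star-shrinking and $\tilde F$-vertex arguments into the single remark that $H$ acts trivially on each representative $\tau$ and $L$ acts by translation, so your write-up is simply a more detailed version of the same proof.
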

\begin{proof}
It follows from Proposition \ref{decomp1} that there is a smooth rational polyhedral cone decomposition $\Sigma$ which satisfies the conditions (a)-(d) after replacing $K$ by a finite extension.
Then we refine $\Sigma$ to obtain a desired decomposition as follows:
In the second step of (\ref{sketch of subdivision}),
we consider a subdivision that satisfies (e), (f), (g).
Note that each stabilizer of $H$ on  each $\tau\in \Sigma$ acts trivially on the cone $\tau$ and $L$ acts on $\tau$ by transporting via $\tilde{b}(L)$. Then
 it is easily verified that such subdivisions exist by Lemma \ref{fixed locus}.
Afterward, we apply the third step of (\ref{sketch of subdivision})
to this decomposition.
Then the resulting decomposition is a desired one.
\end{proof}
\begin{Ex}
If $H=\{\pm 1\}$, then $\tilde{F}=\frac{1}{2}L$ and $F=\frac{1}{2}L/L$.
In particular, it holds that $|F|=2^{\dim N}$. Further, $|F/H|=2^{\dim N}$ follows.
\end{Ex}

\begin{sss}
  For the rest of this section, we assume that \emph{the residue field $k$
  of $R$ is of characteristic $p \neq 2$},
We set that
$H=\{\pm 1\}$
   and the action of $H$ on $M$   is determined by $-1: m\mapsto -m$.
  In particular,  $H=\{\pm 1\}$ also acts on $N=M^\vee$ by $-1: n\mapsto -n$.
\end{sss}
\begin{sss}\label{Kulikov for Kummer}
  Let $\Pm$ be the projective model of $A$ and $\A$ be the N\'{e}ron model of $A$ as Theorem \ref{model for AV}.
  For an abelian variety $Z$, we denote by $Z[2]$ the 2-torsion of $Z$, that is the kernel of the morphism $[2]: Z \to Z$ defined by $x\mapsto 2x$.
  After replacing $K$ by finite extension, we may assume that $A[2]$ is constant over $K$ without loss of generality.
  Overkamp proved this $\A[2]$ coincides with the fixed locus of the action of $H$ on $\Pm$
  when $A$ is of $2$-dimensional \cite[Theorem 3.7]{overkamp2021}. Then the action of $H=\{\pm 1\}$ on $\Pm$ extends to the blow-up $\cXt:= \mathrm{Bl}_{\A[2]}\Pm$ along the closed subscheme $\A[2]$. Hence we obtain $\cX:=\cXt/H$.
  Let $X$ be the Kummer surface associated with $A$.
  Overkamp proves this $\cX$ is a Kulikov model of $X$ \cite[Theorem 3.12]{overkamp2021}.
\end{sss}
\begin{sss}\label{dual intersection2}
We fix the same notation as (\ref{dual intersection1}) and (\ref{Kulikov for Kummer}).
  The dual intersection complex $\Delta ( \cXt)$ of $\cXt_k$ has the same stratification as the dual intersection complex $\Delta (\Pm)$ of $\Pm_k$. Indeed, Overkamp proved that the special fiber $\cXt$ is $ \mathrm{Bl}_{\A_k[2]}\Pm_k$ \cite[Lemma 3.10]{overkamp2021}
  and $\A_k[2]$ is a finite set lying on top dimensional strata of $\Pm_k$
  \cite[Lemma 3.6]{overkamp2021}.
  We can also check the latter by using Lemma \ref{key subdivision}.
  Hence, the blow-up along $\A_k[2]$ does not change the dual intersection complex. It implies that $\Delta (\Pm) \cong \Delta (\cXt)$ as simplicial complexes.

  We denote by $I^+_\Gamma$ the set of orbits $I^+_\Gamma :=I^+/\Gamma$.
  Theorem \ref{model for AV} says that $H$ acts on $\Delta (\Pm) \cong \Delta (\cXt)$ preserving  the simplicial complex structure.
  It implies that the map $\Delta (\cXt) \twoheadrightarrow \Delta (\cX)$ is double branched cover as simplicial complexes.
The dual intersection complex $\Delta ( \cX)$ of $\cX_k$ has a stratification indexed by $I^+_\Gamma$.
In particular, $\Delta ( \cX)$   has the simplicial complex structure of $\ol{\Sigma}/\Gamma :=\{ \overline{\sigma}_\alpha \}_{\alpha \in I^+_\Gamma}$.
\end{sss}

\section{Non-Archimedean SYZ Fibration}

In this section, we introduce some important results from \cite{nicaise_xu_yu_2019}.
\begin{sss}
For the rest of this paper, we assume that \emph{the characteristic of the residue field $k$ is $0$}.
\end{sss}
\begin{Def}\label{ess}

Let $X$ be a Calabi-Yau variety over $K$ and $\omega$ be a volume form on $X$.
Then we can define the \emph{weight function}
$$ \wtf : X^\an \to \R\cup \{\infty\}.$$
Please refer to \cite[\S 4.5]{MuNi} for details.
The \emph{essential skelton} $\Sk (X)$ of $X$ is the subset of $X^\an$ consisting of points where
 $\wtf$ reaches its minimal value.
Since $X$ is Calabi-Yau, $\omega$ is uniquely determined up to a scalar multiple.
Multiplying $\omega$ with a scalar changes the weight function by a constant.
Therefore,  $\Sk (X)$ depends only on $X$ not on $\omega$.
\end{Def}
\begin{sss}
Let $X$ be a smooth connected $K$-variety and
let $\cX$ be an snc-model of $X$ over $S$.
The dual intersection complex $\Delta (\cX)$ of $\cX_k$ is canonically embedded into $X^\an$ \cite[Theorem 3.1]{BFJ}.
We denote by $\Sk(\cX)$ its image of $\Delta (\cX)$.
$\Sk(\cX)$ is called the \emph{Berkovich skelton} of $\cX$ and has the simplicial structure induced by $\Delta (\cX)$.
If $X$ is a Calabi-Yau variety over $K$, then the essential skelton $\Sk (X)$ as in Definition \ref{ess}
is canonically homeomorphic to the subcomplex of  $\Sk (\cX)$.
    Since $\cX$ is snc,
      it follows from \cite[3.3.3]{ddae55dd7a7343b590c46f80664c6a79} that
     the image of this embedding is exactly the essential skeleton $\Sk(X)$.
     In particular, we give a simplicial complex structure to $\Sk (X)$ by the one of $\Sk(\cX)$.
\end{sss}

\begin{Def}\label{B-retraction}
  Let $X$ be a smooth connected $K$-variety and
  let $\cX$ be an snc-model of $X$ over $S$.
  We assume that $X^\an = \cX_\ber$.
  In particular, if $\cX$ is projective over $S$,
  then $X$ is projective over $K$ and
   this assumption holds.
Here, we construct the \emph{Berkovich retraction} associated with an snc-model $\cX$ of $X$ in accordance with \cite[(2.4)]{nicaise_xu_yu_2019} (or \cite[\S 3]{BFJ}).

   Let $x$ be a point in $X^{\an}$ and let $\red_{\cX}(x)$ be its reduction on $\cX_k$ as we saw in Definition \ref{def of reduction}.
  We denote by $Z$ the smallest stratum containing $\red_{\cX}(x)=\xi$. 
  Then 
  it determines a unique face $\sigma$ of the dual intersection complex $\Delta(\cX)$.
  Let $D_1,...,D_r$ be the irreducible components of $\cX_k$ that contain $Z$, and let $N_1,...,N_r$ be their multiplicities in $\cX_k$.
   Then $D_1,...,D_r$ correspond to the vertices $v_1,...,v_r$ of $\sigma$.
  We choose a positive integer $m$ such that $mD_i$ is Cartier at the point $\red_{\cX}(x)$ for every $i$, and we choose a local equation $f_i=0$ for $mD_i$ at $\red_{\cX}(x)$.
  Then $\rho_{\cX}(x)$ is defined as the point of the simplex $\sigma$ with barycentric coordinates
   $$\alpha=\frac{1}{m}(-N_1\log|f_1(x)|,\ldots,-N_r\log|f_r(x)| )$$ with respect to the vertices $(v_1,...,v_r)$.
    The image $\rho_{\cX}(x)$ of $x$ corresponds to the monomial point represented by
    $(\cX,(D_1,...,D_r),\xi)$ and the tuple $$\frac{1}{m}(-\log|f_1(x)|,\ldots,-\log|f_r(x)|),$$ in the terminology of \cite[2.4.5]{MuNi}
    via the embedding of $\Delta(\cX)$ into $X^{\an}$.
    We can easily verify that this definition does not depend on the choices of $m$ and the local equations $f_i$
   and check that  $\rho_{\cX}$ is continuous, and that it is a retraction onto
     the skelton $\Sk(\cX)=\Delta(\cX)$.
\end{Def}

\begin{Def}\label{SYZ}
Let $X$ be a Calabi-Yau variety over $K$. If an snc-model $\cX$ of $X$ is
a good minimal dlt-model of $X$ with a technical assumption
 as in \cite[(1.11)]{nicaise_xu_yu_2019}.
 Then we call the map $\rho_{\cX}\colon X^{\an}\to \Sk(X)$ constructed in Definition
 \ref{B-retraction} the \emph{non-Archimedean SYZ fibration} associated with $\cX$.
\end{Def}
\begin{sss}
  \label{int str}
We note that, even though the subspace $\Sk(X)$ of $X^{\an}$ only depends on $X$,
  the simplicial complex structure on $\Sk(X)$
  and the non-Archimedean SYZ fibration $\rho_{\cX}: X^{\an}\to \Sk(X)$ depend on
  the choice of the good minimal dlt-model $\cX$.
In \cite[\S3.2]{MuNi}, the authors discussed
the canonical piecewise integral affine structure of $\Sk (X)$
and revealed that this piecewise integral affine structure coincides
with the one induced by $\Delta (\cX)$.
In other words, the piecewise integral affine structure induced by $\Delta (\cX)$ does not depend on
  the choice of the good minimal dlt-model $\cX$.
However, this is closer to the topological structure than to the integral affine structure.
We note that we focus on the integral affine structure (more precisely, IAMS structure) in this paper.
\end{sss}

\begin{sss}

Let $T$ be a split algebraic $K$-torus of dimension $n$ with its character group $M$.
We denote by $N=M^{\vee}$ the dual module of $M$.  We define the \emph{tropicalization map} $\rho_T : T^{\an}\to N_{\R}$ of $T$ by
$$ T^\an \ni x\mapsto (m \mapsto -\log |m(x)|)\in M_\R^\vee =N_\R.$$
Then $\rho_T$ is continuous, and its fibers are (not necessarily strictly) $K$-affinoid tori.
 Further, the tropicalization map $\rho_T$ has a canonical continuous section $s\colon N_{\R}\to T^{\an}$
 that sends each $n\in N_{\R}$ to the Gauss point of the
 affinoid torus $\rho^{-1}_{T}(n)$. The image of $s$ is called the \emph{canonical skeleton} of $T$, and denoted by $\Delta(T)$.
 The map $s$ induces a homeomorphism $N_{\R}\to \Delta(T)$.
 We identify $\Delta(T)$ with $N_{\R}$ via this homeomorphism.
\end{sss}
\begin{Def}\label{afd torus fib} Let $Y$ be a $K$-analytic space, let $B$ be a topological space and
  let $f\colon Y\to B$ be a continuous map.
  Then  $f$ is called an $n$-dimensional \emph{affinoid torus  fibration}
  if there is a open covering $\{U_i\}$  of $B$ such that,   for each $U_i$, there is an open subset $V_i$ of $N_{\R} \cong \R^n$ and a commutative diagram
$$\xymatrix{
f^{-1}(U_i)\ar[r] \ar[d]_{f} & \rho_T^{-1}(V_i) \ar[d]^{\rho_T}
\\ U_i \ar[r] & V_i\ar@{}[lu]|{\circlearrowright}
}$$
where the upper horizontal map is an isomorphism of $K$-analytic spaces and the lower horizontal map is a homeomorphism.
\end{Def}

\begin{sss} If $f\colon Y\to B$ is an affinoid torus  fibration, then $f$ induces
  an integral affine structure on the base $B$
  as follows: For each open set $U$ in $B$ as
  in Definition \ref{afd torus fib}, we consider an invertible analytic function $h$ on $f^{-1}(U)$.
  Then
  the absolute value of $h$ is constant along the fibers of $f$ \cite[\S4.1, Lemma 1]{Kontsevich2006}.
   Hence $h$ implies a continuous function
$|h|\colon U\to \R_{>0}$ by taking $|h(b)|$ as $|h(y)|$ for some $y\in f^{-1}(b)$.
 We can define the integral affine functions on $U$ as
 the functions of the form $-\log|h|$.
 If $U$ is connected,
 then we can identify
 the ring of integral affine functions on $U$
  with the ring of polynomial functions of degree $1$ with $\Z$-coefficients
 on $V\subset N_{\R}$ so that this construction indeed defines an integral affine structure
 on $B$ via
  the homeomorphism $U\to V$ \cite[\S4.1, Theorem 1]{Kontsevich2006}.
  More precisely, in \emph{loc.cit.}, they considered affine functions
   whose coefficients are in $\R$,
 rather than $\Z$.
However, that's because they allowed the base field $K$ to be a general nontrivial valued field.
Under the condition that $K$ is a discrete-valued field as in our setting, we can obtain affine functions
 whose coefficients are in $\Z$ as above. That is, we can give
  the integral affine structure to $B$ in this way.
   We call it \emph{non-Archimedean SYZ Picture}.
\end{sss}

\section{Affine Structures for  Degenerations of Kummer Surfaces}
\subsection{Non-Archimedean SYZ Picture}
\begin{sss}
First, we prepare two settings, one for general use and one for Kummer surfaces.
If it is too complicated,  it is enough to just consider the latter setting \eqref{setting},
which is a special case of the former \eqref{general setting}.
\end{sss}
\begin{sss}[general setting]\label{general setting}
Let $A$ be an Abelian variety over $K$ and
 $\A$ be the N\'{e}ron model of $A$.
After taking a  base change along $f: S'\to S$ as in \eqref{base change} if necessary, there is a triple $(G,\mathscr{L}, \sM)\in \DEG$
such that $A=G_\eta$ and $G=\A^0$
by the semiabelian reduction \cite[Expos\'{e} I, Th\'{e}or\`{e}me 6.1]{semiabelian}.
In addition, we may assume that a finite group $H$ acts on $(G,\mathscr{L}, \sM)$ such that
the fixed locus of $H$ on $A$ is constant $K$-group scheme by taking a further base change along $f: S'\to S$ as above,
without loss of generality.
 We assume that $G$ is maximally degenerated, which is the same as $\A$ being.
For the tuple $(M,L,\phi, a,b)=\For(F((G, \mathscr{L}, \mathscr{M})))$, there is a decomposition $\Sigma$ as
Lem \ref{key subdivision}
after taking a  base change along $f: S'\to S$ as above.
In particular, the decomposition $\Sigma$ is $\Gamma=L\rtimes H$-admissible.

  Let $\Pt$ be the toroidal compactification of $T =\spec K[M]$ over $R$ associated with $\Sigma$ as constructed in (\ref{Pt}) and
$\Pm$ be the projective model of $A$ as Theorem \ref{model for AV}.
This $\Pt$ is an snc model of $T$.
$\Pm$ is a Kulikov model of $A$ as we see in (\ref{kulikov for Abelian}).
By definition, this Kulikov model $\Pm$ is a good minimal dlt model with a technical assumption
as in \cite[(2.3)]{nicaise_xu_yu_2019}.
Hence, it follows that $\Sk (A)=\Sk (\Pm)$.
Further, we replace $\mathscr{L}$ by $\mathscr{L}^{\otimes \kappa}$ so that $\mathscr{L}$ extends to the ample
line bundle $\mathscr{L}_\Pm$ on $\Pm$. Since $\M$ is trivial in our setting,
there is no need to consider $\M$ in particular.
Since it holds that $T^\an =\Pt_\ber$ and $A^\an=\Pm_\ber$,
we can define the Berkovich retractions for these snc-models $\Pt$ and $\Pm$.
We denote by $\rho_{\Pt}$ (resp. $\rho_\Pm$)
the Berkovich retraction associated with $\Pt$ (resp. $\Pm$)
as in Definition \ref{B-retraction}.
In particular, $\rho_\Pm$ is a non-Archimedean SYZ fibration.
Let $\rho_T$ be the tropicalization map of $T$.
\end{sss}
\begin{sss}[setting for Kummer surfaces]\label{setting}
Let $A$ be an Abelian surface over $K$ and $X$ be the Kummer surface associated with $A$.
We denote by $\A$ the N\'{e}ron model of $A$.
After taking a  base change along $f: S'\to S$ as in \eqref{base change} if necessary, there is a $(G,\mathscr{L}, \sM)\in \DEG$
such that $A=G_\eta$ and $G=\A^0$
by the semiabelian reduction \cite[Expos\'{e} I, Th\'{e}or\`{e}me 6.1]{semiabelian}.
In addition, we may assume that
 the group $H=\{\pm 1\}$ acts on $(G,\mathscr{L}, \sM)$ so that
 the $K$-group scheme $A[2]$ is constant by taking a further base change  along $f: S'\to S$ as above, without loss of generality.
 We assume that $G$ is maximally degenerated, which is the same as $\A$ being.
For the tuple $(M,L,\phi, a,b)=\For(F((G, \mathscr{L}, \mathscr{M})))$, there is a decomposition $\Sigma$ as
Lem \ref{key subdivision}
after taking a  base change along $f: S'\to S$ as above.
In particular, the decomposition $\Sigma$ is $\Gamma=L\rtimes H$-admissible.

  Let $\Pt$ be the toroidal compactification of $T =\spec K[M]$ over $R$ associated with $\Sigma$ as constructed in (\ref{Pt}) and
$\Pm$ be the projective model of $A$ as Theorem \ref{model for AV}.
This $\Pt$ is an snc model of $T$.
$\Pm$ is a Kulikov model of $A$ as we see in (\ref{kulikov for Abelian}).
Further, we replace $\mathscr{L}$ by $\mathscr{L}^{\otimes \kappa}$ so that $\mathscr{L}$ extends to the ample
line bundle $\mathscr{L}_\Pm$ on $\Pm$. Since $\M$ is trivial in our setting,
there is no need to consider $\M$ in particular.
We denote by $\cX$ the Kulikov model of $X$ associated with $\Sigma$ as in (\ref{Kulikov for Kummer}).
By definition, these Kulikov models $\Pm$ and $\cX$ are good minimal dlt models with a technical assumption
as in \cite[(2.3)]{nicaise_xu_yu_2019}.
Hence, it holds that $\Sk (A)=\Sk (\Pm)$ and $\Sk (X)=\Sk (\cX)$.
In addition, we note that $T^\an =\Pt_\ber$, $A^\an=\Pm_\ber$ and $X^\an=\cX_\ber$.
Hence, we can define the Berkovich retractions for these snc-models $\Pt,\Pm$ and $\cX$.
We denote by $\rho_{\Pt}$ (resp. $\rho_\Pm$, $\rho_\cX$ )
 the Berkovich retraction associated with $\Pt$ (resp. $\Pm$, $\cX$) as in Definition \ref{B-retraction}.
 In particular, $\rho_\Pm$ and $\rho_\cX$ are non-Archimedean SYZ fibrations.
 Let $\rho_T$ be the tropicalization map of $T$.
\end{sss}
\begin{Rem}
As we can see, the setting \eqref{general setting} is a generalization of \eqref{setting}.
Under the setting \eqref{general setting}, we
consider a general dimensional abelian variety with an action of a general finite group.
However, we do not consider the quotient $X$ under this setting \eqref{general setting} in this paper.
It is because
we are not sure that an analog of what Overkamp proved on Kummer surfaces in \cite{overkamp2021} also works.
\end{Rem}
\begin{Prop}\label{SYZ for torus}

  Under the setting as in \eqref{general setting}, the Berkovich retraction
  $\rho_{\Pt}$ of $\Pt$ is equal to the tropicalization map $\rho_{T}$.
  In particular, $\rho_{\Pt}$ is
an
  affinoid torus fibration.
\end{Prop}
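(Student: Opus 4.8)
The plan is to identify both maps $\rho_{\Pt}$ and $\rho_T$ with the retraction onto the canonical skeleton $\Delta(T) \cong N_\R$, working locally on the charts $U_\sigma = \spec R[\sigma^\vee \cap \tilde M]$ of $\Pt$ from \eqref{Pt}. The key observation is that $\Pt$ is a toric scheme over $R$ associated to the fan $\Sigma$ in $\tilde N_\R = N_\R \oplus \R$, with the cone $\sigma_T = \{0\}\times \R_{\geq 0}$ giving the chart $U_{\sigma_T} = \spec R[M] = T$; so $\Pt$ is a $T$-equivariant compactification and the combinatorics of its stratification is exactly $\Sigma$, with $\Delta(\Pt) = \ol\Sigma$ as recalled in \eqref{dual intersection1}.

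First I would recall that for an snc-model $\cX$ of $X$, the Berkovich retraction $\rho_{\cX}$ of Definition \ref{B-retraction} sends $x \in X^\an$ to the point of the face $\sigma$ (dual to the stratum $\overline{\{\red_{\cX}(x)\}}$) with barycentric coordinates built from $-\log|f_i(x)|$, where $f_i = 0$ is a local equation for $m D_i$ at $\red_{\cX}(x)$. For the toric scheme $\Pt$, a point $x$ reducing into the stratum corresponding to a cone $\sigma = \langle (n_1,1),\dots,(n_r,1)\rangle$ lies in the chart $U_\sigma$, and the irreducible components $D_i$ through that stratum are the torus-invariant divisors corresponding to the rays $(n_i,1)$; each $D_i$ has multiplicity $1$ in $\Pt_k$ by the semistability condition (b). A local equation for $D_i$ is the monomial $t^{k_i} X^{m_i}$ for a suitable $\tilde m_i = (m_i,k_i) \in \tilde M$ that pairs to $1$ with $(n_i,1)$ and to $0$ with the other generators of $\sigma$, so $-\log|f_i(x)| = \langle \tilde m_i, \rho_{\tilde T}(x)\rangle$ where $\rho_{\tilde T}$ is the tropicalization map into $\tilde N_\R$. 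Since the restriction of this tropicalization to the divisor $t = 1$ (i.e. the last coordinate equal to $1$ after normalizing by the valuation of $t$) is exactly $\rho_T(x) \in N_\R \cong N_\R \times \{1\}$, the barycentric coordinates of $\rho_{\Pt}(x)$ with respect to $(\ol\sigma)$'s vertices $\ol{n}_i = (n_i,1)$ are precisely the coordinates expressing $\rho_T(x)$ as a convex combination of the $\ol n_i$ inside $\ol\sigma$. Thus $\rho_{\Pt} = \rho_T$ under the identification $\Delta(\Pt) = \ol\Sigma \subset N_\R$.

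For the last assertion, once $\rho_{\Pt} = \rho_T$ is established, the fact that $\rho_T$ is an affinoid torus fibration is essentially definitional: $\rho_T \colon T^\an \to N_\R$ has the commutative-square description of Definition \ref{afd torus fib} with $U_i = V_i = N_\R$ and the identity upper horizontal map, its fibers being $K$-affinoid tori as recalled in the paragraph preceding Definition \ref{afd torus fib}. I expect the main obstacle to be the bookkeeping in the previous paragraph: one must check carefully that for a smooth cone $\sigma$ the dual lattice vectors $\tilde m_i$ dual to the ray generators $(n_i,1)$ indeed give \emph{Cartier} local equations for the $D_i$ (so $m = 1$ works in Definition \ref{B-retraction}), and that the affine-linear coordinates coming from barycentric coordinates on $\ol\sigma$ match the linear functionals $\langle \tilde m_i, - \rangle$ restricted to $N_\R \times \{1\}$ — i.e. that the two a priori different affine structures (from the toric charts versus from the simplicial barycentric coordinates) coincide. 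This is where smoothness and semistability of $\Sigma$ (conditions (a)--(d) of Theorem \ref{model for AV}) are used, and it is a local computation on each $U_\sigma$.
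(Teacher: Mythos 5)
Your proposal is correct and takes essentially the same approach as the paper: a local computation on each toric chart $U_\sigma$, identifying local equations for the boundary divisors with dual lattice vectors $\tilde m_i$, using semistability to get multiplicity one, and comparing $-\log|f_i(x)| = \langle \tilde m_i, (\rho_T(x),1)\rangle$ with the barycentric coordinates. The paper's proof spells out the one step you leave implicit, namely that $\red_{\Pt}(x)$ is the generic point of the stratum dual to the smallest cone of $\Sigma$ containing $(\rho_T(x),1)$, but this follows from the same local computation you describe.
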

\begin{proof}
We set $d:=\dim N$.
  The decomposition $\Sigma$ gives the smooth rational polyhedral decomposition $\overline{\Sigma}$ in $N_\R$ obtained by intersectiong the cones in $\Sigma$ with $N_\R \times \{ 1\}$.
  As we saw in (\ref{dual intersection1}),
  the Berkovich skelton $\Sk (\Pt)$ coincides with $N_\R$.
  Moreover, simplicial structure of $\Sk (\Pt)$ coincides with $\ol{\Sigma}$.
  Let $\sigma\in \Sigma$ be the smallest cone containing $\rho_T(x) \in N_\R\cong N_\R\times \{1\}$.

  We set $\sigma =\R_{\geq 0} \tilde{n}_0+\cdots +\R_{\geq 0} \tilde{n}_s$, where $\tilde{n}_i=(n_i,1)$.
  We extend these elements to a $\Z$-basis $\tilde{n}_0,...,\tilde{n}_{d}$ of $\tilde{N_\R}$.
  Let $\tilde{m}_i=(m_i,r_i)\in \tilde{M}$ be the dual basis of $\tilde{M}$.
  We may assume that $$\rho_T(x) =\sum_{i=0}^s a_i  \tilde{n}_i=:\tilde{n}=(n,1)\in N_\R \times \{ 1\} \cong N_\R,$$
  where $\sum a_i=1$ and $a_i>0$ for all $0\leq i\leq s$.

  We set $A_\sigma = R[\tilde{M}\cap \sigma^\vee] \cong R[Y_0,...,Y_s,Y_{s+1}^{\pm} ,...,Y_d^{\pm}]/(Y_0 \cdots Y_s -t)$, where $Y_i:= t^{r_i}X^{m _i}$.
  Then $U_\sigma := \spec A_\sigma \subset \Pt$.

  It follows that $-\log |Y_j(x)|= \langle\tilde{m_j}, \tilde{n}\rangle = \langle\tilde{m_j}, \sum a_i  \tilde{n_i} \rangle =a_j$ for $x\in \rho_T^{-1}(n)$ and for all $0\leq j\leq d$.
  Therefore $\red_\cX(x)$ coincides with the generic point $\xi_\sigma$ of
  the toric stratum $D_\sigma$ corresponding to $\sigma$.
  Moreover, each irreducible component $D_i$ of $\Pt_0$ that contains $D_\sigma$ corresponds to each one dimensional face
  $\tau_i =\R_{\geq 0} \tilde{n}_i$ of $\sigma$.
  Therefore, it follows that $$\rho_\Pt (x) = \sum_{i=0}^s a_i  \tilde{n}_i = \rho_T(x).$$
\end{proof}
\begin{Cor}\label{SYZ for 2-torus}
  Under the setting as in \eqref{setting}, the Berkovich retraction
  $\rho_{\Pt}$ of $\Pt$ is equal to the tropicalization map $\rho_{T}$.
  In particular, $\rho_{\Pt}$ is
  a $2$ dimensional
  affinoid torus fibration.

\end{Cor}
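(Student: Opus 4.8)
The plan is to deduce this directly from Proposition \ref{SYZ for torus}, since the corollary is merely its specialization. First I would note that the setting \eqref{setting} for Kummer surfaces is literally an instance of the general setting \eqref{general setting}: one takes the finite group to be $H = \{\pm 1\}$ acting on $M$ by $m \mapsto -m$, and the abelian variety $A = G_\eta$ to be two-dimensional, so that every hypothesis imposed in \eqref{general setting} (maximal degeneration, existence of a decomposition $\Sigma$ as in Lemma \ref{key subdivision}, $\Gamma$-admissibility, $T^\an = \Pt_\ber$, etc.) holds in \eqref{setting}. Consequently Proposition \ref{SYZ for torus} applies without change and gives $\rho_{\Pt} = \rho_T$ as maps $T^\an \to N_\R$.

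For the ``in particular'' clause I would recall that the tropicalization map $\rho_T \colon T^\an \to N_\R$ is, essentially by its construction, an affinoid torus fibration: its fibers are the $K$-affinoid tori $\rho_T^{-1}(n)$, and one may take for the open cover of $N_\R$ in Definition \ref{afd torus fib} the single chart $N_\R = N_\R$ with the identity, so the required commutative diagram is tautological. It then only remains to pin down the dimension. Since $G$ is maximally degenerated and $A = G_\eta$ has dimension $2$, the split torus part of the Raynaud extension has rank $2$, i.e.\ $M$ is free of rank $2$; hence $N = M^\vee$ has rank $2$ and $N_\R \cong \R^2$. Therefore $\rho_{\Pt} = \rho_T$ is a $2$-dimensional affinoid torus fibration.

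I do not anticipate any real obstacle here: the content is entirely supplied by Proposition \ref{SYZ for torus}. The one point that deserves a sentence of justification is the dimension bookkeeping, namely that ``abelian surface, maximally degenerated'' forces $\dim N = 2$, which is precisely the integer $d$ occurring in the proof of Proposition \ref{SYZ for torus}; everything else is a direct quotation of that result.
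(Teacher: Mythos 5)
Your proposal is correct and takes essentially the same approach as the paper, which simply notes that the corollary follows by the same argument as Proposition \ref{SYZ for torus}; your observation that the setting \eqref{setting} is literally a special case of \eqref{general setting} (so the proposition applies verbatim), together with the dimension bookkeeping $\dim N = \dim A = 2$ for a maximally degenerated abelian surface, is exactly the intended content.
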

\begin{proof}
It follows by exactly the same argument as above Proposition \ref{SYZ for torus}.
\end{proof}
\begin{sss}\label{ret cover}
  Under the setting as in Definition \ref{B-retraction},
  let $\rho_\cX\colon X^{\an}\twoheadrightarrow \Sk (\cX)\subset X^\an$ be the Berkovich retraction, where the simplicial structure of
  $\Sk (\cX)$ is given by a decomposition $\ol{\Sigma}/L=\{ \overline{\sigma}_\alpha \}_{\alpha \in I^+_L}$ under the notation as in \eqref{dual intersection2}.
  Since the retraction $\rho_\cX$ is continuous, the inverse image $\rho_\cX^{-1}(\Star (\sigma_\alpha))$ is an open set.
  In particular, it holds that $$X^\an = \bigcup_{\alpha \in I}\rho_\cX^{-1}(\Star (\sigma_\alpha)).$$
  We call this covering \emph{ the retraction covering of $X^\an$ associated with $\cX$.}
  In other words, we can regard taking an snc-model of $X$ as taking a retraction covering of $X^\an$.
  To be precise, the stratification of the formal completion $\cX_\f$ gives the retraction covering.
  We note that $\rho_\cX^{-1}(\Star (\sigma_\alpha))=\red_\cX^{-1}(D_\alpha)$, where $D_\alpha$ is the
  scheme-theoretic intersection of the irreducible components corresponding to $1$-dimensional faces of $\sigma_\alpha$.
  Let $\xi_\alpha$ be a stratum of $\cX_k$
  corresponding to $\sigma_\alpha$.
  Then $D_\alpha=\ol{\{\xi_\alpha\}}$.
\end{sss}

\begin{sss}\label{simplicial covering}

  For the decomposition $\Sigma = \{\sigma_\alpha\}_{\alpha \in I} $ as in \eqref{general setting},
the Berkovich skelton $\Sk (\Pt)$ is described as follows:
  $$\Sk(\Pt)=\bigcup_{\alpha \in I^+}\ol{\sigma}_\alpha \cong N_\R \cong N_\R\times \{1\},$$
  where $\ol{\sigma}_\alpha:= \sigma_\alpha \cap (N_\R\times\{ 1\} )$ as in \eqref{dual intersection1}.
  Theorem 3.3 implies that $\Gamma = L\rtimes H$ acts on $\Sk (\Pt)$  as follows:
  $$S_{(l,h)}((n,1))=(n\circ h+\tb (l), 1).$$
Moreover,
  $\Sk(\Pm)=\bigcup_{\alpha \in I^+_L}\ol{\sigma}_\alpha$ (resp.
  $B:=\bigcup_{\alpha \in I^+_\Gamma}\ol{\sigma}_\alpha$)
  is isomorphic to $\Sk (\Pt)/ L$  (resp. $\Sk (\Pt) / \Gamma$) as simplicial complex.
  By Lemma \ref{key subdivision}, the morphism $\Sk (\Pt) \to \Sk (\Pm)$ is an unbranched cover
  such that its fundamental group is isomorphic to $L$, and
   the morphism $\Sk (\Pm) \to B$ is a branched double cover.
   Under the more concrete condition \eqref{setting}, the
 ramification locus of this
   morphism $\Sk (\Pm) \to B=\Sk (\cX)$
   is $Z:=\frac{1}{2}L/\Gamma$.
   In particular, $Z$ consists of 4 points.
 \end{sss}

   \begin{sss}\label{equiv}
   Under the setting as in \eqref{general setting},
   the action of $\Gamma$ on $\Pt$ induces $\Gamma$-action on $T^\an$ via the Raynaud generic fiber.
   In partucular, the reduction map $\red_\Pt$ and the Berkovich retraction map $\rho_\Pt$ are $\Gamma$-equivariant.
   That is, it holds that  $\rho_\Pt(\gamma \cdot x)=S_\gamma(\rho_\Pt(x))$ for all $x\in T^\an$ and $\gamma \in \Gamma$.
   Further, we can also verify that the Berkovich retraction  $\rho_\Pm$ of $\Pm$ is $H$-equivaliant, similarly.

\end{sss}

\begin{Lem}\label{commute}
  Under the setting \eqref{general setting},
  the following diagram commutes.

  $$\xymatrix{
  T^\an \ar[r]^{/L}
  \ar[d]_{\rho_\Pt} & A^\an \ar[d]_{\rho_\Pm}
  \\ \Sk(\Pt)
   \ar[r]^{/L} &  \Sk(\Pm) \ar@{}[lu]|{\circlearrowright}
  }$$
\end{Lem}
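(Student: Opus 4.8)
The plan is to reduce the commutativity of the square to a statement that can be checked fiberwise over the base, using the fact that both vertical maps are retractions and that the quotient maps on source and target are compatible. First I would observe that the quotient morphism $T^\an \to A^\an$ is induced (via the Raynaud generic fiber functor) by the étale quotient $\Pt_\f \to \Pt_\f/L \cong \Pm_\f$; this is where I invoke (\ref{Pt}), which tells us $\Pm_\f \cong \Pt_\f/L$ in the maximally degenerate case, together with the fact that $-_\rig$ (hence $-_\ber$) preserves the relevant structure. Consequently the reduction maps are compatible: $\red_\Pm \circ (/L) = (/L) \circ \red_\Pt$, where the bottom $/L$ here denotes the induced map on special fibers, identifying strata of $\Pm_k$ with $L$-orbits of strata of $\Pt_k$ as in Theorem \ref{model for AV}(iii) and (\ref{dual intersection1}).

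Next I would unwind the definition of the Berkovich retraction (Definition \ref{B-retraction}). For $x \in T^\an$ with image $\bar{x} \in A^\an$, the point $\rho_\Pt(x)$ lies in the face $\sigma$ of $\Delta(\Pt)$ determined by the stratum $\overline{\{\red_\Pt(x)\}}$, with barycentric coordinates read off from $-\log|f_i(x)|$ for local equations $f_i$ of the components $D_i \ni \red_\Pt(x)$. By the compatibility of reduction maps just established, $\red_\Pm(\bar x)$ lies in the stratum that is the image of $\overline{\{\red_\Pt(x)\}}$, which corresponds to the face $\sigma/L$ (i.e.\ the image of $\sigma$ under $\Sk(\Pt) \to \Sk(\Pm) = \Sk(\Pt)/L$). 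Moreover the irreducible components of $\Pm_k$ through $\red_\Pm(\bar x)$ are precisely the images of the $D_i$, with the same multiplicities, and a local equation for $mD_i$ on $\Pt$ at $\red_\Pt(x)$ descends to (or pulls back from) a local equation for the corresponding component on $\Pm$; since the quotient map is étale, $|f_i(x)| = |f_i^{\Pm}(\bar x)|$. Hence the barycentric coordinates of $\rho_\Pm(\bar x)$ in $\sigma/L$ agree with those of $\rho_\Pt(x)$ in $\sigma$, which is exactly the statement that $\rho_\Pm(\bar x) = (\rho_\Pt(x))/L$, i.e.\ the square commutes. One should note that the freeness of the $L$-action on $\Pt_\f$ — guaranteed by property (d) of Theorem \ref{model for AV} (and (e) of Lemma \ref{key subdivision}) — is what makes the quotient étale and the local-equation comparison clean.

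The main obstacle I anticipate is bookkeeping around the descent of local equations and multiplicities through the étale quotient: one must check that the positive integer $m$ and the local equations $f_i$ can be chosen $L$-equivariantly (or simply downstairs and pulled back), so that the numbers $-N_i \log|f_i(x)|$ computed upstairs and downstairs literally coincide rather than merely being proportional. This is essentially formal once one knows the quotient is étale and the stratifications match — Theorem \ref{model for AV}(iii) and (\ref{dual intersection1}) do the combinatorial matching — but it requires care to phrase without circularity. An alternative, cleaner route that avoids local equations entirely: use the description $\rho_\Pt^{-1}(\Star(\bar\sigma_\alpha)) = \red_\Pt^{-1}(D_\alpha)$ from (\ref{ret cover}) together with the fact (Proposition \ref{SYZ for torus}) that $\rho_\Pt = \rho_T$ is the tropicalization map, which is manifestly $L$-equivariant by (\ref{equiv}) with $S_{(l,\mathrm{Id})}((n,1)) = (n + \tilde b(l), 1)$; then commutativity of the square is just the statement that the tropicalization map descends to the quotient by the lattice $\tilde b(L) \subset N_\R$, which is immediate, and one identifies the quotient retraction with $\rho_\Pm$ using the uniqueness of the retraction onto $\Sk(\Pm)$ compatible with $\red_\Pm$. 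I would present the second route as the primary argument and remark that the first gives an alternative verification.
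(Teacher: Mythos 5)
Your first route is essentially the paper's own proof: both unwind Definition \ref{B-retraction} and match, term by term, the reduction point, the incident components, and the barycentric coordinates on the two sides of the square. You are in fact more explicit than the paper on the delicate step: the paper simply declares that $\rho_\Pm(f_\ber(x))$ has the same barycentric coordinates as $\rho_\Pt(x)$ (and appeals to $L$-equivariance somewhat loosely), whereas you supply the mechanism --- the \'{e}tale quotient $\Pt_\f \to \Pm_\f$ (free $L$-action, by property (d) of Theorem \ref{model for AV} and (e) of Lemma \ref{key subdivision}) lets you pull a local equation $f_i^{\Pm}$ back to a valid local equation on $\Pt$ at $\xi$, and the independence of the retraction from the choice of local equations then gives $|f_i(x)|=|f_i^{\Pm}(\bar x)|$. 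This is the right way to close the paper's terse argument.

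The second route --- the one you propose to present as primary --- has a genuine gap. You descend $\rho_T = \rho_{\Pt}$ through its $L$-equivariance to a map $\bar\rho\colon A^\an \to N_\R/L = \Sk(\Pm)$ and then claim $\bar\rho = \rho_\Pm$ by ``the uniqueness of the retraction onto $\Sk(\Pm)$ compatible with $\red_\Pm$''. No such uniqueness statement is established in the paper, and as stated it is not obviously true: the natural reading of ``compatible with $\red_\Pm$'', namely $\rho^{-1}(\Star(\ol{\sigma}_\alpha)) = \red_\Pm^{-1}(D_\alpha)$ as in (\ref{ret cover}), only pins down which open star each point is sent to, not where it lands inside that star, so this condition alone cannot force two retractions to agree on the interior of a positive-dimensional face. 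To make this route watertight you would need to formulate and prove a sharper characterization of $\rho_\Pm$ --- for instance, that it is the unique retraction preserving the values $-\log|f_i|$ for local equations of the components of $\Pm_k$ --- and verifying that $\bar\rho$ satisfies that characterization brings you right back to the local-equation comparison of the first route. I would therefore present the first route as primary and drop, or clearly demote, the second.
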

\begin{proof}
Since $G$ is maximally degenerated, it holds that $\Pm_\f \cong \Pt_\f /L$ as in \eqref{Pt}.
In particular, we obtain the morphism $f : \Pt_\f \to \Pm_\f$.
Then $f_\ber :T^\an\to A^\an$ is the morphism appearing in the above diagram.
Let
 $g: \Sk(\Pt)\to\Sk(\Pm)$
be the morphism appearing in the above diagram, similarly.
Here, the proof is completed by showing the commutativity
$\rho_\Pm \circ f_\ber =g\circ\rho_\Pt$.
By definition, the image $\rho_\Pt(x)$ of $x\in T^\an$ is determined by the point $\xi=\red_\Pt(x)$ coresponding to the cone $\sigma_\xi \in \Sigma$, the irreducible components $D_1,...,D_r$
containing $\xi$
and the barycentric coordinates $(v_1,...,v_r)$ with respect to the vertices corresponding to these $D_i$,
where each $D_i$ corresponds to the $1$-dimensional face $\sigma_{\alpha_i}$ of the cone $\sigma$ for some $\alpha_i\in I^1$.
Note that each $f(D_i)$ is an irreducible component since $f_\ber$ is a covering map.
Then the image $\rho_\Pm(f_\ber(x))$ is determined by the point $f(\xi) =\red_\Pm(f_\ber(x))$, the irreducible components $f(D_1),...,f(D_r)$
and the barycentric coordinates $(v_1,...,v_r)$ with respect to the vertices corresponding to these $f(D_i)$,
where each $f(D_i)$ corresponds to $1$-dimensional cone $\sigma_{\ol{\alpha}_i}$
for some $\ol{\alpha}_i\in I^+_L$ as in Theorem \ref{model for AV}, where $\alpha_i \in I^1$
is the one above.
On the other hand,  $g(\rho_\Pt(x))$ is determined by the simplex $g(\ol{\sigma}) \in \ol{\Sigma}/L$ and the
barycentric coordinates $(v_1,...,v_r)$ with respect to the vertices $g(\ol{\sigma}_{\alpha_i})$,
where $\alpha_i\in  I^1$
is the one above.
Here, the retraction $\rho_\Pt: T^\an\to\Sk (\Pt)$ is $L$-equivaliant as we see in \eqref{equiv}.
Hence we obtain $\rho_\Pm(f_\ber(x))=g(\rho_\Pt(x))$.
That is, the above diagram commutes.
\end{proof}
\begin{Prop}\label{commute for Kummer}
  Under the setting \eqref{setting}, let $\pi$ be the blow up $\pi : \mathrm{Bl}_{A[2]}A \to A$.
  The following diagram commutes.

  $$\xymatrix{
  & (\mathrm{Bl}_{A[2]}A)^\an  \ar[d]_{\pi^\an} \ar[rd]^{H\backslash}& \\
  T^\an \ar[r]^{/L}
  \ar[d]_{\rho_\Pt} & A^\an \ar[d]_{\rho_\Pm}  & X^\an \ar[d]_{\rho_\cX}
  \\ \Sk(\Pt)
  \ar@/_15pt/[rr]_{/\Gamma}
   \ar[r]^{/L} &  \Sk(\Pm) \ar@{}[lu]|{\circlearrowright} \ar[r]^{ H\backslash }& \Sk(\cX) \ar@{}[luu]|{\circlearrowright}
  }$$
\end{Prop}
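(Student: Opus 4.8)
The plan is to verify the diagram face by face, reducing the genuinely new content to a compatibility of Berkovich retractions under a blow-up followed by a finite quotient. The left-hand square is precisely Lemma \ref{commute}, so nothing new is required there. The bottom curved arrow is formal: by \eqref{simplicial covering} and Theorem \ref{model for AV} one has $\Sk(\Pt)/L\cong\Sk(\Pm)$ and $\Sk(\Pm)/H\cong\Sk(\cX)=B$, and since $\Gamma=L\rtimes H$ this gives $\Sk(\Pt)/\Gamma\cong(\Sk(\Pt)/L)/H\cong\Sk(\cX)$, the two displayed maps agreeing by construction. The top triangle merely records the two morphisms $\pi^\an$ and $q^\an$, where $q\colon Y:=\mathrm{Bl}_{A[2]}A\to X=Y/H$ is the quotient map and $q^\an$ is the slanted arrow marked $H\backslash$ (analytification commuting with the finite quotient by $H$); there is nothing to check there. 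Thus everything reduces to the commutativity of the right-hand part, i.e. to the identity
$$\rho_\cX\circ q^\an \;=\; (H\backslash)\circ\rho_\Pm\circ\pi^\an ,$$
where $(H\backslash)\colon\Sk(\Pm)\to\Sk(\cX)$ is the branched double cover of \eqref{dual intersection2}.

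To prove this I would bring in the intermediate snc-model $\cXt=\mathrm{Bl}_{\A[2]}\Pm$ of $Y$ from \eqref{Kulikov for Kummer}, together with the blow-down $\tilde\pi\colon\cXt\to\Pm$ (whose generic fibre is $\pi$) and the quotient map $\cXt\to\cX=\cXt/H$ (whose generic fibre is $q$), and the associated Berkovich retraction $\rho_{\cXt}\colon Y^\an\to\Sk(\cXt)$. Recall from \eqref{dual intersection2} that $\A_k[2]$ is a finite set of closed points lying in the \emph{interiors} of the top-dimensional strata of $\Pm_k$; consequently $\tilde\pi$ is an isomorphism away from these points, $\cXt_k=\mathrm{Bl}_{\A_k[2]}\Pm_k$ acquires no new irreducible component, and $\Delta(\cXt)\cong\Delta(\Pm)$ as simplicial complexes, so that $\Sk(\cXt)=\Sk(\Pm)$ inside $Y^\an$.

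The heart of the argument is then the following two compatibilities, each of which I would check by unwinding Definition \ref{B-retraction} pointwise, using that reduction maps are functorial for morphisms of $R$-models. \textbf{(a)} $\rho_{\cXt}=\rho_\Pm\circ\pi^\an$: for $x\in Y^\an$ with $\red_{\cXt}(x)$ outside the exceptional locus of $\tilde\pi_k$, the morphism $\tilde\pi$ is a local isomorphism near $\red_{\cXt}(x)$ and the defining data (reduction point, incident components with multiplicities, local equations, barycentric coordinates) of $\rho_{\cXt}(x)$ and of $\rho_\Pm(\pi^\an(x))$ coincide; for $x$ with $\red_{\cXt}(x)$ on the exceptional locus, $\red_\Pm(\pi^\an(x))\in\A_k[2]$ lies in the interior of a single component $D$, so $\rho_\Pm(\pi^\an(x))=v_D$, while $\red_{\cXt}(x)$ lies on the strict transform $\tilde D$ and on no other component of $\cXt_k$, so $\rho_{\cXt}(x)=v_{\tilde D}=v_D$ under $\Delta(\cXt)\cong\Delta(\Pm)$. \textbf{(b)} $\rho_\cX\circ q^\an=(H\backslash)\circ\rho_{\cXt}$: as in \eqref{equiv}, $H$ acts compatibly on $\cXt$, on $\cXt_k$, on $\Delta(\cXt)$ and on $Y^\an$, and $\rho_{\cXt}$ is $H$-equivariant, so it descends through $q^\an$; to identify the descended map with $\rho_\cX$ one uses that away from the $H$-fixed locus of $\cXt_k$ the morphism $q$ is étale, whence multiplicities and local equations match and the barycentric computation is preserved, while the $H$-fixed locus of $\cXt_k$ consists (by \eqref{Kulikov for Kummer} and the blow-up picture) of the exceptional $\mathbb{P}^1$'s over $\A_k[2]$, each lying in the interior of a single top-dimensional component $\tilde D$ that $H$ fixes setwise, where again $\rho_{\cXt}(x)=v_{\tilde D}$ maps to $v_{q_k(\tilde D)}=(H\backslash)(v_{\tilde D})$. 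Combining (a) and (b) gives the displayed identity, and together with the left square and the bottom arrow this proves the diagram commutes.

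The main obstacle is exactly the verification of (a) and (b): a careful bookkeeping of reductions, incident components and barycentric coordinates near the exceptional divisor of $\tilde\pi$ and near the $H$-ramification locus. What makes it go through is the geometric input, supplied by \eqref{dual intersection2} and Overkamp's results cited there, that $\A_k[2]$ and the exceptional curves above it all lie in the interiors of top-dimensional strata, so that neither the blow-up nor the quotient alters any stratum of positive codimension; everything else is a routine unwinding of the definition of the Berkovich retraction.
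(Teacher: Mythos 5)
Your argument is correct and follows essentially the same route as the paper: both reduce the new content to the right-hand part of the diagram, both introduce the intermediate snc-model $\cXt=\mathrm{Bl}_{\A[2]}\Pm$ with its retraction $\rho_{\cXt}$, both use the identification $\Sk(\cXt)\cong\Sk(\Pm)$ coming from the fact that $\A_k[2]$ sits in interiors of top-dimensional strata (so the blow-up does not alter the dual complex), and both use $H$-equivariance of $\rho_{\cXt}$ to descend through the quotient. The only difference is one of exposition: the paper asserts $\rho_{\cXt}=\rho_{\Pm}\circ\pi^{\an}$ directly from $\Sk(\Pm)=\Sk(\cXt)$ and then says the rest is checked "as in the proof of Lemma \ref{commute}," whereas you unwind Definition \ref{B-retraction} pointwise, separating the cases near and away from the exceptional locus and from the $H$-ramification locus; this is a more explicit version of the same computation, not a different strategy.
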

\begin{proof}
It follows by the same argument as above Lemma \ref{commute}
that the left part of the above diagram commutes.
Hence it is enough to show that the right part of the above diagram commutes.
We set $\cXt:= \mathrm{Bl}_{\A[2]}\Pm$ as in (\ref{Kulikov for Kummer}).
This $\cXt$ is an snc model of $\mathrm{Bl}_{A[2]}A$. We denote by $\rho_\cXt$ the Berkovich retraction.
Since $\Sk (\Pm) = \Sk (\cXt)$
as we see in (\ref{dual intersection2}),
it holds that
$\rho_\cXt = \rho_\Pm \circ\pi^\an$.
Since $\pi$ is the blow-up along the fixed locus of $H$,
the blow-up $\pi$ is $H$-equivaliant. In particular,
$H$-equivaliant retraction $\rho_\Pm$ implies that $\rho_\cXt$ is $H$-equivaliant.
After that, we can check the commutativity directly by representing the two images concretely as in the proof of Lemma \ref{commute}.
Hence, the right part of the above diagram commutes.
\end{proof}
\begin{Prop}[cf.{\cite[Proposition 3.8]{nicaise_xu_yu_2019}}]\label{covering1}
Under the setting \eqref{general setting}, the morphism $T^\an \to A^\an$  is an unbranched cover.
Moreover the open sets of the form $\rho_\Pm^{-1}(\Star (\ol{\sigma}_\alpha))$
for any $\alpha \in I^+$ are evenly covered neighborhoods.
In particular, $\rho_\Pm$ is
an
affinoid torus fibration.
\end{Prop}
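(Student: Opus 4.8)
The plan is to descend the affinoid torus fibration structure of $\rho_\Pt$ --- already identified with the tropicalization map $\rho_T$ in Proposition \ref{SYZ for torus} --- along the $L$-quotient $T^\an\to A^\an$, using that this quotient is \'etale and that condition (e) of Lemma \ref{key subdivision} forces the retraction coverings of \eqref{ret cover} to spread out over the $L$-translates. This follows the pattern of \cite[Proposition 3.8]{nicaise_xu_yu_2019}.

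\emph{First, that $f_\ber\colon T^\an\to A^\an$ is an unbranched cover.} Since $G$ is maximally degenerated, \eqref{Pt} gives $\Pm_\f\cong\Pt_\f/L$ together with the morphism $f\colon\Pt_\f\to\Pm_\f$; in K\"unnemann's relatively complete model this $f$ is an \'etale quotient by a free, properly discontinuous $L$-action (this is exactly what conditions (d)--(e) of Lemma \ref{key subdivision} encode, i.e. the non-archimedean uniformization of $A$), so $f_\ber$ is \'etale, in particular a local isomorphism of $K$-analytic spaces. Fix $\alpha\in I^+$, write $\bar\alpha\in I^+_L$ for its class and $V_\alpha:=\rho_\Pt^{-1}(\Star(\ol{\sigma}_\alpha))$, and let $q\colon\Sk(\Pt)\to\Sk(\Pm)$ be the quotient. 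By Lemma \ref{commute}, $f_\ber^{-1}(\rho_\Pm^{-1}(\Star(\ol{\sigma}_{\bar\alpha})))=\rho_\Pt^{-1}(q^{-1}(\Star(\ol{\sigma}_{\bar\alpha})))$; since $q$ is the $L$-quotient one has $q^{-1}(\Star(\ol{\sigma}_{\bar\alpha}))=\bigcup_{l\in L}S_{(l,\mathrm{Id})}(\Star(\ol{\sigma}_\alpha))$, and by condition (e) of Lemma \ref{key subdivision} this union is disjoint. Combining this with the $L$-equivariance of $\rho_\Pt$ from \eqref{equiv} gives $f_\ber^{-1}(\rho_\Pm^{-1}(\Star(\ol{\sigma}_{\bar\alpha})))=\bigsqcup_{l\in L}S_{(l,\mathrm{Id})}(V_\alpha)$, a disjoint union of $L$-translates of $V_\alpha$. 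Each $S_{(l,\mathrm{Id})}(V_\alpha)$ is disjoint from all its nontrivial $L$-translates, so $f_\ber$ restricts on it to an injective \'etale morphism, hence an open immersion, whose image is $\rho_\Pm^{-1}(\Star(\ol{\sigma}_{\bar\alpha}))$ by the commutativity in Lemma \ref{commute}. Thus $\rho_\Pm^{-1}(\Star(\ol{\sigma}_{\bar\alpha}))$ is evenly covered, and as $\alpha$ ranges over $I^+$ these open sets cover $A^\an$ (because $\rho_\Pm$ retracts onto $\Sk(\Pm)$, which is covered by the open stars $\Star(\ol{\sigma}_{\bar\alpha})$, $\bar\alpha\in I^+_L$), so $f_\ber$ is an unbranched cover.

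\emph{Second, that $\rho_\Pm$ is an affinoid torus fibration.} By Proposition \ref{SYZ for torus}, $\rho_\Pt=\rho_T$, and the restriction of $\rho_T$ over any open subset of $N_\R$ --- in particular over $\Star(\ol{\sigma}_\alpha)$ --- is an affinoid torus fibration by definition. The quotient $q$ restricts to a homeomorphism $\Star(\ol{\sigma}_\alpha)\to\Star(\ol{\sigma}_{\bar\alpha})$ (again by condition (e)), and over this homeomorphism $f_\ber$ restricts to the isomorphism $V_\alpha\to\rho_\Pm^{-1}(\Star(\ol{\sigma}_{\bar\alpha}))$ produced above; transporting the affinoid-torus-fibration chart of $\rho_T$ over $\Star(\ol{\sigma}_\alpha)$ through this pair of isomorphisms yields such a chart for $\rho_\Pm$ over $\Star(\ol{\sigma}_{\bar\alpha})$. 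Since $\{\Star(\ol{\sigma}_{\bar\alpha})\}_{\bar\alpha\in I^+_L}$ is an open cover of $\Sk(\Pm)$, this is exactly the data required by Definition \ref{afd torus fib}, so $\rho_\Pm$ is an affinoid torus fibration.

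\emph{Main obstacle.} The only step that is not pure diagram-chasing is the claim that $f\colon\Pt_\f\to\Pm_\f$ is \'etale --- equivalently, that the $L$-action on $\Pt_\f$ is free and properly discontinuous in the formal category, so that $f_\ber$ is a genuine local isomorphism of analytic spaces rather than merely a continuous bijection onto a quotient; this is the content of conditions (d)--(e) of Lemma \ref{key subdivision} and should be read off from K\"unnemann's relatively complete model (\cite[\S3.6]{Kun}, cf. \eqref{Pt}). Once that is in hand, the rest reduces to Lemma \ref{commute}, \eqref{equiv}, and Proposition \ref{SYZ for torus}.
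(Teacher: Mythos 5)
Your proof is correct and takes essentially the same route as the paper's: both rely on property (e) of Lemma \ref{key subdivision} to get disjointness of the $L$-translates of $\Star(\ol{\sigma}_\alpha)$, the $L$-equivariance of $\rho_\Pt$ from \eqref{equiv}, the commutativity of Lemma \ref{commute} to pass from $T^\an$ to $A^\an$, and Proposition \ref{SYZ for torus} ($\rho_\Pt=\rho_T$) for the final affinoid-torus-fibration claim. The step you flag as the ``main obstacle'' --- that $\Pt_\f\to\Pm_\f$ is the quotient by a free, properly discontinuous $L$-action, so $f_\ber$ is a local isomorphism --- is taken for granted in the paper's proof (it is built into the Mumford--K\"unnemann relatively complete model and the identification $\Pm_\f\cong\Pt_\f/L$ of \eqref{Pt}), so your extra care there is an elaboration rather than a gap.
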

\begin{proof}

By the property (e) of Lemma \ref{key subdivision},
$\Star (\ol{\sigma}_\alpha)\subset \Sk (\Pm)$ is an evenly covered
neighborhood with respect to $\Sk (\Pt)\to \Sk (\Pm)$,
where we identify $\Star (\ol{\sigma}_\alpha)\subset \Sk (\Pm)$ with one of the sheets
$\wt{\Star} (\ol{\sigma}_\alpha) \subset \Sk (\Pt)$.
For each $l\in L\setminus \{ 0\}$, the following diagram holds.
$$\xymatrix{
\rho_\Pt^{-1}(\wt{\Star}  (\ol{\sigma}_\alpha)) \ar[r]^{\simeq}_{l}
\ar[d]_{\rho_\Pt} &l\cdot\rho_\Pt^{-1}(\wt{\Star}  (\ol{\sigma}_\alpha))\ar[d]_{\rho_\Pt}
\\ \Star (\ol{\sigma}_\alpha)
 \ar[r]_{S_{l}}^{\simeq} &  S_{l}(\Star (\ol{\sigma}_\alpha))
}$$
In particular,
 the upper horizontal map is an isomorphism of $K$-analytic spaces
 and the lower horizontal map is a homeomorphism.
The property (e) of Lemma \ref{key subdivision} says that
$ \wt{\Star}  (\ol{\sigma}_\alpha)\cap S_{l}(\wt{\Star}  (\ol{\sigma}_\alpha))=\emptyset$.
It implies that
$\rho_\Pt^{-1}(\wt{\Star}  (\ol{\sigma}_\alpha)) \cap
l\cdot\rho_\Pt^{-1}(\wt{\Star}  (\ol{\sigma}_\alpha))=\emptyset$.
By Lemma \ref{commute},
 we obtain $\rho_\Pt^{-1}(\wt{\Star}  (\ol{\sigma}_\alpha))\cong \rho_\Pm^{-1}(\Star (\ol{\sigma}_\alpha))$.
That is, we can identify $\rho_\Pm^{-1}(\Star (\ol{\sigma}_\alpha))$ with one of the sheets
 $\rho_\Pt^{-1}(\wt{\Star}  (\ol{\sigma}_\alpha))$.
Hence,  $\rho_\Pm^{-1}(\Star (\ol{\sigma}_\alpha))$ is an evenly covered neighborhoods.
By Proposition \ref{SYZ for torus}, $\rho_\Pt=\rho_T$ follows.
It implies the last assertion.
\end{proof}
\begin{Cor}
Under the setting \eqref{setting}, the morphism $T^\an \to A^\an$  is an unbranched cover.
Moreover the open sets of the form $\rho_\Pm^{-1}(\Star (\ol{\sigma}_\alpha))$
for any $\alpha \in I^+$ are evenly covered neighborhoods.
In particular, $\rho_\Pm$ is
a 2-dimensional
affinoid torus fibration.
\end{Cor}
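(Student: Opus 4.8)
The plan is to reduce everything to Proposition \ref{covering1}. The setting \eqref{setting} is, by construction, exactly the special case of \eqref{general setting} in which $A$ is an abelian surface and $H=\{\pm 1\}$ acts on $(G,\mathscr{L},\sM)$ by inversion; in particular the decomposition $\Sigma$ supplied by Lemma \ref{key subdivision} is $\Gamma=L\rtimes H$-admissible and has properties (a)--(g), exactly as required in Proposition \ref{covering1}. So the first step is simply to record that all hypotheses of Proposition \ref{covering1} are met in the Kummer setting.

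Next I would invoke Proposition \ref{covering1} verbatim: it gives that $T^\an\to A^\an$ is an unbranched cover, that each $\rho_\Pm^{-1}(\Star(\ol{\sigma}_\alpha))$ with $\alpha\in I^+$ is an evenly covered neighborhood, and — combining Lemma \ref{commute} with the identification $\rho_\Pt=\rho_T$ from Corollary \ref{SYZ for 2-torus} — that over each such neighborhood $\rho_\Pm$ agrees, via the isomorphism of $K$-analytic spaces identifying $\rho_\Pm^{-1}(\Star(\ol{\sigma}_\alpha))$ with one sheet of $\rho_\Pt^{-1}(\Star(\ol{\sigma}_\alpha))$, with the restriction of the tropicalization map $\rho_T$. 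Hence $\rho_\Pm$ is an affinoid torus fibration in the sense of Definition \ref{afd torus fib}.

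The only extra point is the dimension count: since $G$ is maximally degenerated and $A=G_\eta$ has dimension $2$, the split torus $T$ has character lattice $M$ of rank $2$, so $N_\R\cong\R^2$ and the affinoid torus fibration $\rho_\Pm$ is $2$-dimensional. There is no real obstacle here; the entire content is already carried by Lemma \ref{key subdivision} and Proposition \ref{covering1}, and this Corollary merely specializes them to the Kummer situation in the form that will be used in the remainder of \S5.
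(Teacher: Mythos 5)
Your proposal is correct and matches the paper's own proof, which simply reads ``It follows by the same argument as above Proposition \ref{covering1}''; you have merely spelled out the specialization (that setting \eqref{setting} is the special case of \eqref{general setting} with $\dim A=2$ and $H=\{\pm1\}$, so Proposition \ref{covering1} applies and the rank of $N$ is $2$), which is exactly the intended reasoning.
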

\begin{proof}
  It follows by the same argument as above Proposition \ref{covering1}.
\end{proof}
\begin{sss}
  In \cite[Proposition 3.8]{nicaise_xu_yu_2019}, they used the decomposition $\Sigma$
  which is constructed in Proposition \ref{model for AV} and proved that
  the Berkovich retraction $\rho_\Pm$ does not depend
  on the choice of such decomposition.
  On the other hand, the reason why we adopted the decomposition which is constructed in
  Lemma \ref{key subdivision}
  is to show directly that $\rho_\Pm$ is an affinoid torus fibration
  by looking at the covering map concretely.

\end{sss}
\begin{Cor}\label{covering2}
  Under the setting \eqref{setting},
  the morphism $T^\an \setminus \rho_T^{-1}(\frac{1}{2}L) \to X^\an \setminus \rho_\cX^{-1}(Z)$  is an unbranched cover.
  Moreover the open sets of the form $\rho_\cX^{-1}(\Star (\ol{\sigma}_\alpha))$
  for any $\alpha \in I^+\setminus I_{\rm sing}$ are evenly covered neighborhoods.
  In particular, the restriction of $\rho_\cX $ to the open set $ X^\an \setminus \rho_\cX^{-1}(Z)$ is a 2-dimensional affinoid torus fibration.
\end{Cor}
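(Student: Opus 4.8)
The plan is to run the proof of Proposition~\ref{covering1} once more, now with the full group $\Gamma=L\rtimes H$ in place of $L$, with property (g) of Lemma~\ref{key subdivision} in place of property (e), with Proposition~\ref{commute for Kummer} in place of Lemma~\ref{commute}, and with Corollary~\ref{SYZ for 2-torus} (which identifies $\rho_{\Pt}$ with the tropicalization map $\rho_T$, an affinoid torus fibration) in place of Proposition~\ref{SYZ for torus}. Before that I would dispose of two bookkeeping points. First, exactly as in~\eqref{equiv}, the $\Gamma$-action on $\Pt$ makes the reduction map and the retraction $\rho_{\Pt}=\rho_T$ into $\Gamma$-equivariant maps. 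Second, $\{\Star(\ol{\sigma}_\alpha)\}_{\alpha\in I^+\setminus I_{\rm sing}}$ is an open cover of $\Sk(\cX)\setminus Z$ whose members, once each is identified with a single sheet inside $\Sk(\Pt)=N_\R$, avoid $\tfrac12 L$: this is because $\Star(\ol{\sigma}_\alpha)$ contains no vertex of $\ol{\Sigma}$ other than $\ol{\sigma}_\alpha$ itself, while a point of $\Sk(\cX)$ lies in none of these stars only when it is a vertex coming from $I_{\rm sing}$, i.e.\ only when it is one of the four points of $Z=\tfrac12 L/\Gamma$ (cf.~\eqref{simplicial covering}).

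The step I expect to be the real obstacle --- the only place where the Kummer case differs substantially from Proposition~\ref{covering1} --- is to check that over $X^\an\setminus\rho_{\cX}^{-1}(Z)$ the map in the statement is honestly the quotient by a \emph{free} action of $\Gamma$, the issue being that $X$ is the quotient of $\mathrm{Bl}_{A[2]}A$, not of $A$, by $H$. Here I would first compute $\rho_{\Pm}(A[2])=\tfrac12 L/L$, using the rigid-analytic uniformization $A^\an=T^\an/L$: the group $A[2]$ is an extension of $\tfrac12 L/L$ by $T[2]$, the tropicalization map sends $T[2]$ to $0\in N_\R$ and sends the $T[2]$-coset over a class represented by $\tfrac12 l$ to $\tfrac12\tb(l)$, so modulo $L$ its image is precisely $\tfrac12 L/L$. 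Combining this with the commutativity of the right-hand part of the diagram of Proposition~\ref{commute for Kummer} --- which expresses $\rho_{\cX}$ precomposed with the $H$-quotient $(\mathrm{Bl}_{A[2]}A)^\an\to X^\an$ in terms of $\rho_{\cXt}=\rho_{\Pm}\circ\pi^\an$ and the $H$-quotient $\Sk(\Pm)=\Sk(\cXt)\to\Sk(\cX)$ --- shows that both the exceptional locus of $\pi^\an$ (which lies over $A[2]$) and the ramification locus of $(\mathrm{Bl}_{A[2]}A)^\an\to X^\an$ (which lies over the exceptional curves of $\cXt$) are carried by $\rho_{\cX}$ into $Z$. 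Therefore, over $X^\an\setminus\rho_{\cX}^{-1}(Z)$, the morphism $\pi^\an$ is an isomorphism and $H$ acts without fixed points, so the composition of the $L$-quotient $T^\an\setminus\rho_T^{-1}(\tfrac12 L)\to A^\an\setminus\rho_{\Pm}^{-1}(\tfrac12 L/L)$ with the inverse of $\pi^\an$ and the $H$-quotient $(\mathrm{Bl}_{A[2]}A)^\an\to X^\an$ is a $\Gamma$-Galois covering.

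With this input secured, the conclusion follows exactly as in Proposition~\ref{covering1}. Fixing $\alpha\in I^+\setminus I_{\rm sing}$, for every $\gamma\in\Gamma\setminus\{0\}$ translation by $\gamma$ fits into a commutative square relating $\rho_{\Pt}^{-1}(\Star(\ol{\sigma}_\alpha))$ and $\gamma\cdot\rho_{\Pt}^{-1}(\Star(\ol{\sigma}_\alpha))$ upstairs with $\Star(\ol{\sigma}_\alpha)$ and $S_\gamma(\Star(\ol{\sigma}_\alpha))$ downstairs, the top map an isomorphism of $K$-analytic spaces and the bottom one a homeomorphism; property (g) of Lemma~\ref{key subdivision} gives $\Star(\ol{\sigma}_\alpha)\cap S_\gamma(\Star(\ol{\sigma}_\alpha))=\emptyset$, so the translates $\gamma\cdot\rho_{\Pt}^{-1}(\Star(\ol{\sigma}_\alpha))$ are pairwise disjoint open subsets of $T^\an\setminus\rho_T^{-1}(\tfrac12 L)$, each mapped isomorphically onto $\rho_{\cX}^{-1}(\Star(\ol{\sigma}_\alpha))$ by the $\Gamma$-covering just established together with Proposition~\ref{commute for Kummer}. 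Hence $\rho_{\cX}^{-1}(\Star(\ol{\sigma}_\alpha))$ is an evenly covered neighbourhood, and since $\rho_{\Pt}=\rho_T$ restricted over the open set $\Star(\ol{\sigma}_\alpha)\subset N_\R$ is a chart for an affinoid torus fibration (Corollary~\ref{SYZ for 2-torus}), the restriction of $\rho_{\cX}$ to $X^\an\setminus\rho_{\cX}^{-1}(Z)$ is a $2$-dimensional affinoid torus fibration; in particular $T^\an\setminus\rho_T^{-1}(\tfrac12 L)\to X^\an\setminus\rho_{\cX}^{-1}(Z)$ is an unbranched cover.
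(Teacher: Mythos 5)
Your proof is correct and follows essentially the same route as the paper's: both pass from $T^\an$ to $X^\an$ via the diagram of Proposition~\ref{commute for Kummer}, both use property~(f) of Lemma~\ref{key subdivision} to match $\tfrac12 L$ with $I_{\rm sing}$ and property~(g) to get evenly covered stars, and both fall back on Proposition~\ref{covering1} for the $L$-covering part. The main difference is that you spell out two steps the paper leaves implicit --- the tropical computation $\rho_{\Pm}(A[2])=\tfrac12 L/L$ via the uniformization $A^\an=T^\an/L$, and the observation that the $H$-fixed locus of $(\mathrm{Bl}_{A[2]}A)^\an$ (the exceptional divisor) is carried by $\rho_{\cXt}=\rho_{\Pm}\circ\pi^\an$ into $\tfrac12 L/L$, so that $H$ acts freely over $X^\an\setminus\rho_{\cX}^{-1}(Z)$ and the composite is an honest $\Gamma$-Galois covering. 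These additions are genuine and make the argument more self-contained, but they are justifications of precisely what the paper uses rather than a different method.
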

\begin{proof}
The morphism $(\mathrm{Bl}_{A[2]}A)^\an \to X^\an$ as in Proposition \ref{commute for Kummer} induces the morphism $$A^\an \setminus
\rho_\Pm^{-1}(\frac{1}{2}L/L) \to X^\an \setminus \rho_\cX^{-1}(Z)$$
by restricting to the open set which is isomorphic to $A^\an \setminus
\rho_\Pm^{-1}(\frac{1}{2}L/L)$. By composing with $T^\an\to A^\an$, we consider the morphism
$$T^\an\setminus \rho_T^{-1} (\frac{1}{2}L) \to  X^\an \setminus \rho_\cX^{-1}(Z).$$
By the property (f) of Lemma \ref{key subdivision}, the above exceptional part $\frac{1}{2}L$ corresponds to $ I_{\rm sing}$.
By the property (g) of Lemma \ref{key subdivision},
$\Star (\ol{\sigma}_\alpha)\subset \Sk (\cX)$ is an evenly covered
neighborhood with respect to $\Sk (\Pt)\to \Sk (\cX)$ for all $\alpha \in I^+ \setminus I_{\rm sing}$.
Hence, this morphism $T^\an \setminus \rho_T^{-1}(\frac{1}{2}L) \to X^\an \setminus \rho_\cX^{-1}(Z)$
is an unbranched cover.
Moreover, we obtain
the latter assertion by using
 Proposition \ref{covering1}.
\end{proof}

\begin{Prop}[cf.{\cite[(3.6), Proposition 3.8]{nicaise_xu_yu_2019}}]\label{SYZ for av}
  Under the setting \eqref{general setting},
   the induced integral affine structure on $\Sk(A)$ by $\rho_\Pm$ coincides with the quotient structure on $N_\R /L$.
\end{Prop}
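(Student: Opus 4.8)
The plan is to transport the statement along the covering map $T^{\an}\to A^{\an}$ and reduce to the fact that the tropicalization map $\rho_T$ induces the standard integral affine structure on $N_\R$. Recall that the integral affine structure attached to an affinoid torus fibration $f\colon Y\to B$ is local on $B$: on a chart $U$ as in Definition \ref{afd torus fib} the integral affine functions are precisely the functions of the form $-\log|h|$ with $h$ an invertible analytic function on $f^{-1}(U)$. So it suffices to identify, for each member of a suitable open cover of $\Sk(\Pm)=N_\R/L$, the ring of such functions with the ring of $L$-quotient integral affine functions.

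As a cover of $\Sk(\Pm)$ I would take the open stars $\Star(\ol\sigma_\alpha)$ for $\alpha$ ranging over $I^+$ (equivalently over the orbit set $I^+_L$). By property (e) of Lemma \ref{key subdivision} each $\Star(\ol\sigma_\alpha)\subset\Sk(\Pm)$ is evenly covered by $\Sk(\Pt)\to\Sk(\Pm)$, and by Proposition \ref{covering1} the open set $\rho_\Pm^{-1}(\Star(\ol\sigma_\alpha))$ is an evenly covered neighborhood, identified with one sheet $\rho_\Pt^{-1}(\Star(\ol\sigma_\alpha))$ of the unbranched cover $T^{\an}\to A^{\an}$. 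By Lemma \ref{commute} this identification is compatible with the retractions and with the homeomorphism of the base $\Star(\ol\sigma_\alpha)\subset\Sk(\Pm)$ onto the corresponding sheet in $\Sk(\Pt)$. Hence an invertible analytic function $h$ on $\rho_\Pm^{-1}(\Star(\ol\sigma_\alpha))$ corresponds to an invertible analytic function $h'$ on $\rho_\Pt^{-1}(\Star(\ol\sigma_\alpha))$, and under the base homeomorphism $-\log|h|$ corresponds to $-\log|h'|$. By Proposition \ref{SYZ for torus} we have $\rho_\Pt=\rho_T$, so $-\log|h'|$ is an integral affine function for the standard structure on $N_\R$ (the one induced by the tropicalization map, whose coordinates are $-\log|m(\cdot)|$, $m\in M$), and conversely every such function arises this way. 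Therefore the $\rho_\Pm$-integral affine functions on $\Star(\ol\sigma_\alpha)\subset\Sk(\Pm)$ are exactly the pullbacks, along the sheet homeomorphism, of the standard integral affine functions on the sheet $\Star(\ol\sigma_\alpha)\subset N_\R$.

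It then remains to observe that this is precisely the description of the quotient integral affine structure on $N_\R/L$. Indeed, by \eqref{action} the group $L$ acts on $N_\R\times\{1\}\cong N_\R$ through the translations $n\mapsto n+\tb(l)$, $l\in L$, which lie in ${\rm Aff}(\Z^{\dim N})$; hence the standard structure on $N_\R$ descends, and a function on an open subset of $N_\R/L$ is integral affine for the quotient structure if and only if its restriction to each sheet over it (in particular to a given $\Star(\ol\sigma_\alpha)$) is standard integral affine. Comparing with the previous paragraph yields the claim.

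I expect the one point requiring genuine care to be the compatibility invoked in the second paragraph: that the isomorphism $\rho_\Pm^{-1}(\Star(\ol\sigma_\alpha))\cong\rho_\Pt^{-1}(\Star(\ol\sigma_\alpha))$ furnished by Proposition \ref{covering1} really intertwines $\rho_\Pm$ with $\rho_\Pt$ and the chosen base identification, so that taking $-\log|\cdot|$ of invertible functions matches up on the two bases; this is exactly where Lemma \ref{commute} together with the $L$-equivariance noted in \eqref{equiv} do the work. Everything else is bookkeeping with the open stars and with the translation action of $L$ on $N_\R$.
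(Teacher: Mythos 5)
Your proposal is correct and follows essentially the same approach as the paper's proof: both rely on the commutative diagram of Lemma \ref{commute} relating $\rho_\Pt$ and $\rho_\Pm$ over the cover $T^\an\to A^\an$, the identification $\rho_\Pt=\rho_T$ from Proposition \ref{SYZ for torus}, the affinoid-torus-fibration structure from Proposition \ref{covering1}, and the identification of the $L$-action as translations by $\tb(L)$. You simply spell out, at the level of local charts and $-\log|\cdot|$ of invertible functions, the step the paper phrases compactly as ``the diagram gives a morphism of integral affine manifolds.''
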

\begin{proof}

It follows from  \eqref{kulikov for Abelian} that $\Sk(A)=\Sk (\Pm)$.
By Proposition \ref{covering1}, the non-Archimedean SYZ fibration $\rho_\Pm$ is an affinoid torus fibration. Hence this fibration
$\rho_\Pm$ induces the integral affine structure on $\Sk (A)$.
Then  the following commutative diagram
$$\xymatrix{
T^\an \ar[r]^{/L} \ar[d]_{\rho_T} & A^\an \ar[d]^{\rho_\Pm}
\\ N_\R \ar[r]_{/L} &  \Sk(A) \ar@{}[lu]|{\circlearrowright}
}$$
gives the morphism $N_\R \to \Sk (A)$ between integral affine manifolds.
In particular, this morphism is defined by taking the quotient of $N_\R$ by the lattice $\tb :L\hookrightarrow N_\R$.
Hence, this finishes the proof.
\end{proof}
\begin{Cor}\label{rdc obs}
  Let $T^2=N_\R/L$ be the integral affine manifold  constructed in Proposition \ref{SYZ for av},
  and let $\mathcal{T}_{T^2}$ be the local system on $T^2$ of lattices of tangent vectors.
  Then, the radiance obstruction $c_{T^2} \in {\rm H}^1(T^2,\mathcal{T}_{T^2})$
  (cf.\cite{GH},
  \cite{article})
  coincides with $\tb \in  {\rm Hom}(L, N) \subset {\rm Hom}(L, N_\R)$ via
  the canonical isomorphism
  $ {\rm H}^1(T^2,\mathcal{T}_{T^2})\cong {\rm Hom}(L, N_\R).$
\end{Cor}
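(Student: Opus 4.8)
The plan is to read the radiance obstruction straight off the Goldman--Hirsch definition \cite{GH} (see also \cite{article}) applied to the concrete affine torus $T^2 = N_\R/L$ of Proposition \ref{SYZ for av}. Recall that for an $n$-dimensional affine manifold $B$ with holonomy representation $\rho\colon \pi_1(B)\to {\rm Aff}(\R^n)=\R^n\rtimes {\rm GL}(\R^n)$ one has the \emph{affine holonomy local system}, fitting into a short exact sequence of local systems $0\to \mathcal{T}_B\to \mathcal{E}\to \underline{\R}\to 0$ obtained by viewing ${\rm Aff}(\R^n)$ inside ${\rm GL}(\R^{n+1})$; the radiance obstruction $c_B\in {\rm H}^1(B,\mathcal{T}_B)$ is by definition the image of $1\in\R={\rm H}^0(B,\underline{\R})$ under the connecting homomorphism $\delta\colon {\rm H}^0(B,\underline{\R})\to {\rm H}^1(B,\mathcal{T}_B)$. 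Concretely, writing $\rho(\gamma)=(\tau_\gamma,\lambda_\gamma)$ with translational part $\tau_\gamma$ and linear part $\lambda_\gamma$, the assignment $\gamma\mapsto\tau_\gamma$ is a $1$-cocycle for the $\pi_1(B)$-module on which $\gamma$ acts through $\lambda_\gamma$, and $c_B$ is its cohomology class.

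First I would make the holonomy of $T^2=N_\R/L$ explicit. By Proposition \ref{SYZ for av} the affine structure on $T^2$ is exactly the quotient of the standard structure on $N_\R$ by the lattice $\tb\colon L\hookrightarrow N_\R$; equivalently, the universal cover is $N_\R$ with developing map the identity, $\pi_1(T^2)=L$, and $l\in L$ acts as the deck transformation $n\mapsto n+\tb(l)$, which is the restriction to $N_\R\times\{1\}$ of the action $S_{(l,{\rm Id})}$ of \ref{action}, \ref{simplicial covering}. In particular the linear holonomy is trivial, so $\mathcal{T}_{T^2}$ is the constant local system with fibre the integral tangent lattice $N$, and the canonical identification ${\rm H}^1(T^2,\mathcal{T}_{T^2})\cong {\rm Hom}(H_1(T^2,\Z),N)={\rm Hom}(L,N)\subset {\rm Hom}(L,N_\R)$ of the statement is just evaluation of cocycles on loops.

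Then the computation of $\delta(1)$ is immediate: lifting $1\in\underline{\R}$ to the section $(0,1)$ of $\mathcal{E}=\underline{N_\R}\oplus\underline{\R}$ and applying $l\in L$, which acts by $(v,s)\mapsto (v+s\,\tb(l),s)$, yields $(\tb(l),1)$; hence the coboundary is the cocycle $l\mapsto\tb(l)$. Therefore $c_{T^2}$ is the class of $l\mapsto\tb(l)$, which under the above identification is precisely $\tb\in {\rm Hom}(L,N)\subset {\rm Hom}(L,N_\R)$, as claimed; note $\tb(L)\subseteq N$ by \ref{action}.

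The statement is essentially formal, so the \textbf{only real work} is convention-chasing: one must check that the developing map/holonomy attached by Proposition \ref{SYZ for av} to the fibration $\rho_\Pm$ is indeed the purely translational one above (in particular that no twist by a linear part enters, which is clear since here one only quotients $N_\R$ by $L$, not by all of $\Gamma$), one must reconcile the sheaf $\mathcal{T}_{T^2}$ of lattices with the $\R$-coefficient local system appearing in the Goldman--Hirsch definition, and one must fix orientations and signs so that the answer comes out as $+\tb$ rather than $-\tb$. None of this is deep, but it has to be pinned down for the identification with the degeneration datum $b$ to read as stated.
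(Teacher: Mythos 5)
Your proof is correct and takes essentially the same approach as the paper: the paper's proof is the single line ``It directly follows from Proposition \ref{SYZ for av},'' and your argument is precisely the unpacking of that statement, identifying the holonomy of $T^2=N_\R/L$ as purely translational via $l\mapsto(\cdot+\tb(l))$ and reading off the Goldman--Hirsch connecting-map cocycle as $l\mapsto\tb(l)$. The careful convention-chasing you flag at the end (integral vs.\ real coefficients, sign) is exactly the content the paper is implicitly deferring to \cite{GH}.
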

\begin{proof}
  It directly follows from Proposition \ref{SYZ for av}.
\end{proof}
\begin{sss}
In \cite[Theorem 6.1]{nicaise_xu_yu_2019}, they proved that
for each maximally degenerating projective Calabi-Yau variety $X$ over $K$ and
any good minimal dlt-model $\cX$ over $S$,
the singular locus $Z$ of the essential skeleton $\Sk(X)$ with the IAMS structure induced by $\Sk(\cX)$
is contained in the union of the faces of codimension $\geq 2$ in $\Sk(\cX)$.
In particular, the singular locus is of codimension $\geq 2$.
 Further,
 in
 \emph{loc.cit.},
 they proved that the \emph{piecewise} integral affine structure of $\Sk(X)$
 induced by this IAMS structure of $\Sk(X)$ does not depend on the choice of such dlt-models.

As we state in \eqref{int str}, however, what is called piecewise integral structure
is closer to the topological structure than to the integral affine structure.
In other words, the IAMS structure of $\Sk(X)$
induced by $\Sk(\cX)$ \emph{does} depend on the choice of such dlt-models.
In general, it is difficult to describe its IAMS structure explicitly,
but in the case of Kummer surfaces, it can be described as follows:
\end{sss}
\begin{Thm}[The Affine Structure via non-Archimedean SYZ Picture]\label{nAside}
  Under the setting \eqref{setting}, the restriction  of
  the non-Archimedean SYZ fibration $\rho_{\cX} : X^\an \to
  \Sk(X)$ to the open set $X^\an \setminus \rho_\cX^{-1}(Z)$ is a $2$-dimensional
   affinoid torus fibration.
  Moreover, the integral affine structure on $\Sk (X) \setminus Z$ induced by
  $\rho_{\cX}$ coincides with the restriction of the quotient structure
  on $N_\R / \Gamma$, where $\Gamma=L\rtimes H$.

\end{Thm}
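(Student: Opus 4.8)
The plan is to bootstrap everything from the results already assembled in the preceding subsections, localizing over the stars of the cones. First I would invoke Corollary \ref{covering2}: it already gives, for each $\alpha\in I^+\setminus I_{\rm sing}$, that $\rho_\cX^{-1}(\Star(\ol\sigma_\alpha))$ is an evenly covered neighborhood for the unbranched cover $T^\an\setminus\rho_T^{-1}(\tfrac12 L)\to X^\an\setminus\rho_\cX^{-1}(Z)$, and that the restriction of $\rho_\cX$ to $X^\an\setminus\rho_\cX^{-1}(Z)$ is a $2$-dimensional affinoid torus fibration. So the first assertion of the theorem is essentially just a citation of Corollary \ref{covering2}; I would state it as such and move on. The content to be proved is the identification of the \emph{integral affine structure} that this affinoid torus fibration induces on $\Sk(X)\setminus Z$.

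For the affine structure, the strategy is to compare two integral affine structures on $\Sk(X)\setminus Z\cong (N_\R\setminus\tilde F)/\Gamma$: the one induced by $\rho_\cX$ via the recipe of \S4 (affine functions are $-\log|h|$ for invertible analytic $h$ on preimages of opens), and the quotient structure coming from the standard integral affine structure on $N_\R$ modulo the $\Gamma$-action $S_{(l,h)}(n,1)=(n\circ h+\tilde b(l),1)$, which is integral affine because $H=\{\pm1\}$ acts by $n\mapsto -n\in{\rm GL}(\Z^2)$ and $L$ acts by the integral translation lattice $\tilde b(L)\subset N$. Working locally on $\Star(\ol\sigma_\alpha)$ for $\alpha\notin I_{\rm sing}$, Corollary \ref{covering2} identifies $\rho_\cX^{-1}(\Star(\ol\sigma_\alpha))$ with a single sheet $\rho_T^{-1}(\Star(\ol\sigma_\alpha))\subset T^\an$, compatibly with the retractions; hence an invertible analytic function on $\rho_\cX^{-1}(\Star(\ol\sigma_\alpha))$ pulls back to one on $\rho_T^{-1}(\Star(\ol\sigma_\alpha))$, and by Proposition \ref{SYZ for torus} (i.e.\ $\rho_\Pt=\rho_T$ is the tropicalization map) the invertible analytic functions there are, up to units of constant absolute value, the monomials $X^m$, $m\in M$, whose associated affine function is $n\mapsto -\log|X^m(x)|=\langle m,n\rangle$. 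So on each such star the $\rho_\cX$-induced affine functions are exactly the restrictions of the linear functions $\langle m,-\rangle$, $m\in M$ — which is precisely the local description of the quotient affine structure from $N_\R$. Since the stars $\Star(\ol\sigma_\alpha)$, $\alpha\in I^+\setminus I_{\rm sing}$, cover $\Sk(X)\setminus Z$ (by property (f) of Lemma \ref{key subdivision}, $\tilde F$ is exactly accounted for by $I_{\rm sing}$), the two integral affine structures agree on an open cover, hence agree.

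The main obstacle is the bookkeeping around the branch locus and the group action: one must be careful that $\Star(\ol\sigma_\alpha)$ for $\alpha\notin I_{\rm sing}$ really does avoid the fixed locus $\tilde F/\Gamma$ so that the covering $T^\an\setminus\rho_T^{-1}(\tfrac12 L)\to X^\an\setminus\rho_\cX^{-1}(Z)$ is genuinely unbranched over it — this is exactly what properties (e), (f), (g) of Lemma \ref{key subdivision} were engineered to guarantee (property (g) gives $\Star(\ol\sigma_\alpha)\cap S_\gamma(\Star(\ol\sigma_\alpha))=\emptyset$ for all $\gamma\in\Gamma\setminus\{0\}$), so I would cite those explicitly rather than re-prove anything. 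A second, more subtle point is that the affine structure must be checked on \emph{connected} opens with the transition maps landing in ${\rm Aff}(\Z^2)$: here the passage through a single sheet means no $H$-twist enters within a star, so transitions between overlapping stars are the same as the transitions of the $N_\R$-chart, composed with the deck transformations $S_{(l,h)}$, which lie in $\Z^2\rtimes{\rm GL}(\Z^2)$; this is where one uses that $\tilde b(L)\subset N$ is an integral sublattice (subsection \ref{action}) and that $-1\in{\rm GL}(\Z^2)$. Finally I would remark that this shows the $\rho_\cX$-induced IAMS structure on $\Sk(X)$ is the quotient $N_\R/\Gamma$, completing the proof.
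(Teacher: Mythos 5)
Your proposal is correct and follows essentially the same route as the paper: both cite Corollary~\ref{covering2} for the affinoid-torus-fibration statement and then identify the induced affine structure via the commutative diagram relating $\rho_T$, $\rho_\cX$, and the $\Gamma$-quotient. Your version merely spells out the local detail (pullback of invertible analytic functions through the sheets and agreement with the tropicalization chart) that the paper compresses into the phrase ``in the same manner as above Proposition~\ref{SYZ for av}.''
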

\begin{proof}
It follows from \eqref{Kulikov for Kummer} that $\Sk(X)=\Sk (\cX)$.
By Corollary \ref{covering2}, $\rho_{\cX} |_{X^\an \setminus \rho_\cX^{-1}(Z)}$ is an affinoid torus fibration.
The following commutative diagram
$$\xymatrix{
T^\an \setminus \rho_T^{-1}(\frac{1}{2}L) \ar[r] \ar[d]_{\rho_T} & X^\an \setminus \rho_\cX^{-1}(Z) \ar[d]^{\rho_\cX}
\\ N_\R \setminus \frac{1}{2}L \ar[r]_{/\Gamma} &  \Sk(X)\setminus Z \ar@{}[lu]|{\circlearrowright}
}$$
 gives the unbranched cover $ N_\R \setminus \frac{1}{2}L \to  \Sk(X)\setminus Z$.
In the same manner as above Proposition \ref{SYZ for av}, we obtain the isomorphism
$$  \Sk(X)\setminus Z \cong (N_\R \setminus \frac{1}{2}L)/\Gamma = (N_\R/\Gamma) \setminus \{4 pts\} $$
as an integral affine manifold.
\end{proof}
\begin{Cor}\label{rad obs for Kummer}
Let $S^2=N_\R/\Gamma$ be the IAMS constructed in Theorem \ref{nAside} and
let $\mathcal{T}_{S^2\setminus Z}$ be the local system on $S^2\setminus Z$ of lattices of tangent vectors.
We denote by $\iota :S^2\setminus Z\to S^2$ the natural inclusion.
Then  the radiance obstruction $c_{S^2} \in {\rm H}^1(S^2,\iota_* \mathcal{T}_{S^2\setminus Z})$
  coincides with $\frac{1}{2}\tb \in {\rm Hom}(L, N_\R)$
  via the isomorphism ${\rm Hom}(L, N_\R) \cong  {\rm H}^1(T^2,\mathcal{T}_{T^2})\cong {\rm H}^1(S^2,\iota_* \mathcal{T}_{S^2\setminus Z})$ induced by the
  quotient morphism
  $T^2 \to S^2$ between these IAMS.
  Further, the radiance obstruction $c_{S^2}$ is contained in ${\rm Hom}(L,N)$.
\end{Cor}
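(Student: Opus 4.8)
The plan is to deduce Corollary \ref{rad obs for Kummer} almost entirely from Corollary \ref{rdc obs} together with the structure of the branched double cover $T^2 = N_\R/L \to S^2 = N_\R/\Gamma$ analysed in Theorem \ref{nAside}. First I would recall that in Corollary \ref{rdc obs} the radiance obstruction $c_{T^2}\in{\rm H}^1(T^2,\mathcal{T}_{T^2})$ is identified, under the canonical isomorphism ${\rm H}^1(T^2,\mathcal{T}_{T^2})\cong{\rm Hom}(L,N_\R)$, with the lattice inclusion $\tb : L\hookrightarrow N_\R$. The radiance obstruction is functorial for morphisms of (integral) affine manifolds, so the covering map $q\colon T^2\to S^2$ induces a pullback $q^*\colon {\rm H}^1(S^2,\iota_*\mathcal{T}_{S^2\setminus Z})\to {\rm H}^1(T^2,\mathcal{T}_{T^2})$ sending $c_{S^2}$ to $c_{T^2}$. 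The key point is that this pullback is an isomorphism: away from $Z$ the map $q$ is an unbranched cover with deck group $H=\{\pm1\}$ (by Corollary \ref{covering2} and Theorem \ref{nAside}), and since $Z$ and its preimage $\frac12 L/L$ have codimension $2$, the cohomology groups computing the radiance obstruction on $S^2$ with coefficients in $\iota_*\mathcal{T}_{S^2\setminus Z}$ and on $T^2$ with coefficients in $\mathcal{T}_{T^2}$ are unchanged by removing these points; one then uses that the transfer/pullback for a finite cover becomes an isomorphism after identifying both with ${\rm Hom}(L,N_\R)$ via the universal cover $N_\R$.

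Concretely, I would set up the isomorphism ${\rm Hom}(L,N_\R)\cong{\rm H}^1(S^2\setminus Z,\mathcal{T}_{S^2\setminus Z})\cong{\rm H}^1(S^2,\iota_*\mathcal{T}_{S^2\setminus Z})$ by the same mechanism as in Proposition \ref{SYZ for av}: the universal cover of $S^2\setminus Z$ is $N_\R\setminus\frac12 L$, whose fundamental group surjects onto $\Gamma=L\rtimes H$, and the radiance obstruction is computed as the class of the affine monodromy cocycle, i.e. the translational parts $S_{(l,h)}(n,1)=(n\circ h+\tb(l),1)$ of the $\Gamma$-action. Restricting this cocycle to the subgroup $L\subset\Gamma$ gives exactly $\tb$, but — and this is the crux — the comparison between $T^2$ and $S^2$ involves a factor of $2$. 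The covering $T^2\to S^2$ has degree $2$, and under the identification of both ${\rm H}^1$'s with ${\rm Hom}(L,N_\R)$ the pullback $q^*$ acts as multiplication by $1$ on the nose only if one is careful about which lattice indexes the period; since the affine monodromy of $S^2$ around the $H$-part contributes the fixed-point translations $\frac12 L$, one finds $q^*(c_{S^2}) = c_{T^2}$ forces $c_{S^2}=\frac12\tb$. I would make this precise by writing down the cocycle on a fundamental domain: a generator of $\pi_1(S^2\setminus Z)$ that projects to $l\in L$ lifts on $N_\R$ to a path from a basepoint to $n+\tb(l)$ passing through a half-translate, contributing $\frac12\tb(l)$ twice, which is consistent with $c_{T^2}=\tb$ and yields $c_{S^2}=\frac12\tb$.

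The main obstacle I anticipate is justifying rigorously that the radiance obstruction of the IAMS $S^2$, taken in ${\rm H}^1(S^2,\iota_*\mathcal{T}_{S^2\setminus Z})$, is correctly computed by the affine monodromy cocycle of the action of $\Gamma$ on $N_\R$ — i.e. that passing from the punctured manifold $S^2\setminus Z$ to $S^2$ via $\iota_*$ does not lose information, and that the radiance obstruction is compatible with this. Since $Z$ is a finite set of points (codimension $2$) on the surface $S^2$, and the radiance obstruction is a degree-$1$ class, the relevant restriction map ${\rm H}^1(S^2,\iota_*\mathcal{T}_{S^2\setminus Z})\to{\rm H}^1(S^2\setminus Z,\mathcal{T}_{S^2\setminus Z})$ is injective (indeed the local cohomology obstruction lives in degree $\geq 2$ at points), so the class is determined by its restriction, which is the $\Gamma$-monodromy cocycle. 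Once this identification is in place, the final statement that $c_{S^2}=\frac12\tb$ lies in ${\rm Hom}(L,N)$ — rather than merely ${\rm Hom}(L,N_\R)$ — follows because $\frac12\tb(l)\in N$ for all $l\in L$: this is the content of Example \ref{key subdivision}'s computation $\tilde F=\frac12 L$, i.e. the half-lattice $\frac12\tb(L)$ is itself contained in $N$, which in turn is exactly the semistability/integrality property (b) of the cone decomposition.
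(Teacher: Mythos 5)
The paper's proof of the first assertion is essentially an appeal to authority: it cites Tsutsui's unpublished work \cite{Tsurvey} for the statement that the quotient $q\colon T^2\to S^2$ induces an isomorphism $q_*\colon {\rm H}^1(T^2,\mathcal{T}_{T^2})\cong {\rm H}^1(S^2,\iota_*\mathcal{T}_{S^2\setminus Z})$ satisfying $c_{S^2}=\tfrac{1}{2}c_{T^2}$, and then combines this with Corollary~\ref{rdc obs}. Your proposal instead tries to derive the factor $\tfrac{1}{2}$ directly, and as written that derivation does not hold up. Two specific issues: (i) $N_\R\setminus\tfrac{1}{2}L$ is \emph{not} the universal cover of $S^2\setminus Z$ --- it is $\R^2$ minus a lattice, whose $\pi_1$ is free of infinite rank, so it is only the $\Gamma$-cover, not the universal cover; (ii) the crux sentence ``the affine monodromy of $S^2$ around the $H$-part contributes the fixed-point translations $\tfrac{1}{2}L$, one finds $q^*(c_{S^2})=c_{T^2}$ forces $c_{S^2}=\tfrac{1}{2}\tb$'' does not yield the claimed factor. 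The translational part of the $\Gamma$-holonomy of $(l,h)$ is $\tb(l)$, not $\tfrac{1}{2}\tb(l)$, and a loop encircling the singular point $\tfrac{1}{2}\tb(l)\in Z$ corresponds to the order-two element $(l,-1)\in\Gamma$ whose translational part is again $\tb(l)$; so restricting the $\Gamma$-cocycle to $L$ gives $\tb$, not $\tfrac{1}{2}\tb$. The factor $\tfrac{1}{2}$ must instead come from the normalization of the isomorphism ``induced by $q$'' (a transfer/Gysin map, for which $q_*\circ q^*$ is multiplication by the degree $2$), not from a re-reading of the monodromy cocycle, and your argument conflates the two directions. This is precisely the subtlety the paper defers to Tsutsui; your ``passing through a half-translate, contributing $\tfrac{1}{2}\tb(l)$ twice'' sketch is not a substitute for it.

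For the second assertion, the paper cites Overkamp's result \cite[Proposition 3.5]{overkamp2021} that $b$ takes only even values, which gives $\tfrac{1}{2}\tb(L)\subset N$ intrinsically. Your route via Lemma~\ref{key subdivision} is a genuinely different and also valid argument, modulo one imprecision: you need \emph{both} properties (b) and (f), not property (b) alone. Property (f) forces the ray through $(\tfrac{1}{2}\tb(l),1)$ to lie in $\Sigma$, and semistability (b) then forces its primitive generator to be at height $1$, whence $\tfrac{1}{2}\tb(l)\in N$; neither property by itself suffices. Note also that this argument establishes integrality of the post-base-change $\tb$, which is what is relevant to $c_{S^2}$, whereas Overkamp's proposition gives the cleaner fact that $b$ itself is even before any base change.
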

\begin{proof}
  Tsutsui proved that the quotient morphism $q: T^2\to S^2$ induces the isomorphism
  $q_* :{\rm H}^1(T^2,\mathcal{T}_{T^2})\cong {\rm H}^1(S^2,\iota_* \mathcal{T}_{S^2\setminus Z})$ such
  that $c_{S^2}=\frac{1}{2}c_{T^2}$ holds in his unpublished work\cite{Tsurvey}.
      Hence, the first assertion directly follows
      from Theorem \ref{nAside}, Corollary \ref{rdc obs} and the above Tsutsui's work.

      On the other hand,
      Overkamp proved that the map $b:L\times M\to \Z$ as in \eqref{setting} takes only even values
      \cite[Proposition 3.5]{overkamp2021}. Hence, $\tb: L \to N$ also takes only even values.
      It implies that $c_{S^2}=\frac{1}{2}\tb\in {\rm Hom}(L,N)$.
\end{proof}
\begin{Rem}\label{indep of pol}
  Under the setting \eqref{setting}, these IAMS are uniquely determined by $M$, $L$ and $b$.
  Hence, these IAMS do not depend on the polarization $\phi$.
\end{Rem}

\begin{sss}
In \cite{GO}, which we wrote with Odaka after this paper, we study $K$-trivial finite quotients of abelian varieties by generalizing the discussion of Kummer surfaces which we have seen in this paper.
For the rest of this subsection, we consider the generalized case in advance of that.
\end{sss}

 \begin{Thm}[{\cite[Proof of Corollary 3.3]{GO}}]\label{SNC model for fqav}
 Consider an arbitrary abelian variety $A$ over $K$
 of dimension $g$
 with an action of a finite group $H$ as appeared in \eqref{general setting}.
 Assume that $H$ acts trivially on the canonical bundle $\omega_A$ on $A$ so that the canonical bundle $\omega_{A/H}$ on $A/H$  is trivial.
 Then,
 for the $H$-equivariant SNC model $\Pm$ as appeared in Theorem \ref{model for AV}, the pair
 $(\Pm/H,(\Pm /H)_k)$ is qdlt in the sense of \cite{dFKX}.
 Further,
 we assume that for any nontrivial $h\in H$, the fixed locus of its action on $\Sk (A)$ is 0-dimensional.
 Then $(\Pm/H,(\Pm /H)_k)$ is dlt.

 \end{Thm}
 \begin{proof}
 We write the irreducible decomposition of
 $\Pm_k$ as $\cup_i E_i$. We want to show that
 for any $h$ which is not the identity $e$,
 $h$ does not fix any $E_i$ pointwise.
 Suppose the contrary and take a general point of
 $x\in E_i$. $\Pm$ is smooth over  $R$
 at $x$, where $R$ is the DVR of $K$.
 We take local coordinates
 $(x_1,\cdots,x_g)$ of $x\in E_i$ which we extend to $h$-invariant coordinates
 around
 $x\in \Pm$. Then
 $(x_1,\cdots,x_g,t)$ is a $H$-invariant
 local coordinates of $x\in \Pm$, which contradicts with
 nontriviality of $h$.
 Hence $(\Pm/H)_k$ is reduced.
 In particular, it implies that the quotient $(\Pm/H,(\Pm/H)_k)$ is qdlt.

 Suppose there is $h(\neq e)\in H$
 which preserves a strata $Z$ of $\Pm_k$
 (a log canonical center of
 $(\Pm,\Pm_k)$) pointwise. Then
 the strata of the dual complex $\Sk(A)$
 which corresponds to $Z$ is fixed by $h$, hence
 contradicts with our last assumption.
 It implies that the quotient
 $(\Pm/H,(\Pm/H)_k)$ is dlt.
 \end{proof}

 \begin{Rem}
   Theorem \ref{SNC model for fqav} partially extends a result by Overkamp
   in the Kummer surfaces case
   cf., \cite[\S 2, \S 3]{overkamp2021}).
 \end{Rem}

 \noindent We are now in a position to generalize Theorem \ref{nAside}.
 \begin{sss}[setting for K-trivial finite quotients of abelian varieties]\label{setting fqav}

 Let $A$ be a $g$-dimensional abelian variety over $K$,
 $H$ be a group satisfying the whole conditions as appeared in Theorem \ref{SNC model for fqav}, and
 $X$ be the quotient of the abelian variety $A$ by the group $H$.
 We denote by $\A$ the N\'{e}ron model of $A$.
 After taking a  base change along $f: S'\to S$ as in \eqref{base change} if necessary, there is a $(G,\mathscr{L}, \sM)\in \DEG$
 such that $A=G_\eta$ and $G=\A^0$
 by the semiabelian reduction \cite[Expos\'{e} I, Th\'{e}or\`{e}me 6.1]{semiabelian}.
  Assume that $G$ is maximally degenerated,  which is the same as $\A$ being.
 For the tuple $(M,L,\phi, a,b)=\For(F((G, \mathscr{L}, \mathscr{M})))$, there is a decomposition $\Sigma$ as
 Lem \ref{key subdivision}
 after taking a  base change along $f: S'\to S$ as above.
 In particular, the decomposition $\Sigma$ is $\Gamma=L\rtimes H$-admissible.

   Let $\Pt$ be the toroidal compactification of $T =\spec K[M]$ over $R$ associated with $\Sigma$ as constructed in (\ref{Pt}) and
 $\Pm$ be the projective model of $A$ as Theorem \ref{model for AV}.
 This $\Pt$ is an SNC model of $T$.
 $\Pm$ is a Kulikov model of $A$ as we see in (\ref{kulikov for Abelian}).
 Further, we replace $\mathscr{L}$ by $\mathscr{L}^{\otimes \kappa}$ so that $\mathscr{L}$ extends to the ample
 line bundle $\mathscr{L}_\Pm$ on $\Pm$. Since $\M$ is trivial in our setting,
 there is no need to consider $\M$ in particular.
 We denote by $\cX$ the dlt model of $X$ associated with $\Sigma$ as in Theorem \ref{SNC model for fqav}.
 That is, $\cX:=\Pm /H$.
 By definition, these Kulikov models $\Pm$ and $\cX$ are good minimal dlt models with a technical assumption
 as in \cite[(2.3)]{nicaise_xu_yu_2019}.
 Hence, it holds that $\Sk (A)=\Sk (\Pm)$ and $\Sk (X)=\Sk (\cX)$.
 In addition, we note that $T^\an =\Pt_\ber$, $A^\an=\Pm_\ber$, and $X^\an=\cX_\ber$.
 Hence, we can define the Berkovich retractions for these models $\Pt,\Pm$, and $\cX$.
 We denote by $\rho_{\Pt}$ (resp., $\rho_\Pm$, $\rho_\cX$ )
  the Berkovich retraction associated with $\Pt$ (resp., $\Pm$, $\cX$) as in Definition \ref{B-retraction}.
  In particular, $\rho_\Pm$ and $\rho_\cX$ are non-Archimedean SYZ fibrations.
  Let $\rho_T$ be the tropicalization map of $T$.
  Here, we denote by $Z\subset \Sk (\cX)$ the ramification locus of $\Sk (\Pm) \to \Sk (\cX)$.
 \end{sss}

 \begin{Thm}[NA SYZ picture for K-trivial finite quotients of abelian varieties]\label{nA SYZ for fqav}
   Under the setting \eqref{setting fqav}, the restriction  of
   the non-Archimedean SYZ fibration $\rho_{\cX} : X^\an \to
   \Sk(X)$ to the open set $X^\an \setminus \rho_\cX^{-1}(Z)$ is an
    affinoid torus fibration.
   In partucular, the integral affine structure on $\Sk (X) \setminus Z$ induced by
   $\rho_{\cX}$ coincides with the restriction of the quotient structure
   on $N_\R / \Gamma$, where $\Gamma=L\rtimes H$.
   Moreover, the skelton $\Sk (X)$ is an IAMS, that is ${\rm codim} Z\geq 2$.
 \end{Thm}
 \begin{proof}
   In a similar way as Proposition \ref{commute for Kummer},
   the following diagram commutes.
      $$\xymatrix{
      T^\an \ar[r]^{/L}
      \ar[d]_{\rho_\Pt} & A^\an \ar[r]^-{H\backslash} \ar[d]_{\rho_\Pm}  & X^\an \ar[d]_{\rho_\cX}
      \\ \Sk(\Pt)
      \ar@/_15pt/[rr]_{/\Gamma}
       \ar[r]^{/L} &  \Sk(\Pm) \ar@{}[lu]|{\circlearrowright} \ar[r]^{ H\backslash }& \Sk(\cX) \ar@{}[lu]|{\circlearrowright}
      }$$
 Hence, it induces an isomorphism $\Sk (X) \backslash Z \cong ( N_\R / \Gamma) \backslash Z$
 as an integral affine manifold by the same discussion as Theorem \ref{nAside}.

 To finish the proof, we show  ${\rm codim} Z\geq 2$.
 Since $X$ is $K$-trivial, the ramification divisor $R$ of the finite morphism $f:A\to X$ vanishes.
 If $D$ is a fixed prime divisor on $A$ for some $h\in H$,
 then $D$ is a ramification divisor on $A$.
 Indeed, when we set $\xi$ and $\xi'$ as the generic point of $D$ and $f(D)$,
  the dimension of the finite morphism $\str_{X,\xi'} \to \str_{A,\xi}$ between two DVR's is given by the stabilizer of $H$.
 In particular, $\dim_{\str_{X,\xi'}} (\str_{A,\xi})\geq 2$.
 On the other hand, $\dim_{\str_{X,\xi'}} (\str_{A,\xi})$ is equal to the value of an uniformizing parameter of $\str_{X,\xi'}$ for the discrete valuation on $\str_{A,\xi}$. This is nothing but the ramification index of $D$.
 Hence, $D$ is a ramification divisor.
 From now on, we show that $A$ has a fixed divisor for some $h\in H$ if  ${\rm codim} Z = 1$.
 As in \eqref{how to act}, any action $h$ on $A$ can lift to an action on the split torus $T(=\spec K[M])$.
 In particular, we may assume $h \in {\rm GL}(N) \cap {\rm GL} (L)$, where $N=M^\vee$.
 Then this action descends to the skeleton $\Sk (\Pm) \cong N_\R /L$ via the canonical projection $N_\R \to N_\R/L$.
 By construction, any action $h\in H$ on $\Sk (\Pm)$
 is given in this way.
 If ${\rm codim} Z =1$, then some $h$ fixes some 1 codimensional subspace in $N_\R$.
 Fix such an $h$.
 Here, for the simplicial decomposition of $N_\R$ as in Lemma \ref{key subdivision}, the stabilizer of $H$ on each simplex is trivial.
 It implies that $h$ has $g-1$ linear independent eigenvectors with eigenvalues 1. In particular, $h$ is diagonalizable.
 Further, $h\in  {\rm GL}(N)$ implies
 $\det h=\pm 1$.
 If $\det h= 1$, then $h$ must be trivial.
 Hence, $\det h =-1$.
 That is,
 $h$ is diagonalizable with eigenvalues $(-1,1,\dots, 1)$.
 Since $N=M^\vee$, the same holds for the action of $h$ on $M$.
 Then we can take eigenvectors of $h$ in $M$ since $h \in {\rm GL}(M)$.
 In particular,
 we take a primitive eigenvector $m\in M$ of $h$ with eigenvalue $-1$.
 Here, $z^m-1=z_1^{m_1}\cdots z_g^{m_g}-1$ is a prime element of $K[M]$.
 Indeed, we can take a basis of $M$ such that an element of the basis is $m\in M$
 since $M/m\Z$ is a free $\Z$-module and $0\to m\Z \to M\to M/m\Z\to 0$.
 That is, we may assume that $m=(1,0,\dots, 0)$ after taking some $B\in {\rm GL}(M)$.
 Then
 $z^m-1=z_1-1$ is a prime element of $K[M]$
 since
 $$K[M]/(z_1-1)\cong K[z_2^{\pm},\dots, z_g^{\pm}].$$
 Note that $m$ is an eigenvector of $h$ with eigenvalue $-1$. It implies that
 the prime divisor on $T$ defined by $z^m-1=0$ is invariant for $h$.
 Note that we can take
  an affinoid domain $V$ of $T^{\rm an}$ such that
  the restriction of $T^{\rm an} \to A^{\rm an}$ to $V$ is an isomorphism and
 the interior of $V$ intersects the closed analytic space defined by $z^m-1=0$.
 It implies that $A$ has a prime divisor locally defined by  $z^m-1=0$.
 In particular, the prime divisor on $A$ is invariant for $h$.
 It is a contradiction. Hence ${\rm codim} Z \geq 2$ follows.
 \end{proof}

\subsection{Gromov-Hausdorff limit Picture}
\begin{sss}\label{setting2}
  In this section, we also consider the same situation as (\ref{setting}).
  Furthermore, we assume that $k=\C$ and $(G,\mathscr{L})$ is principally polarized (that is, $\phi:L\to M$ is an isomorphism).
We set $B(l_i,l_j):= b(l_i,\phi (l_j))$, where $\{ l_i\}$ is a basis of $L$.
By definition, $B:L\times L\to \Z$ is a symmetric positive definite quadric form.
We set $\Delta:=\{t\in \C \ |\ |t|<1 \}$ and $\Delta^*:=\Delta\setminus \{0\}$.
For given $(G,\mathscr{L})$, we assume that  $G(\C)\twoheadrightarrow \Delta$, where $G(\C)$ is the
 analytification of $G$ in the sense of complex analytic space.
 For abbreviation, we write $G$ instead of $G(\C)$.
\end{sss}
\begin{sss}
  We recall the Gromov-Hausdorff limit (cf. \cite{BBI}).
  We can define the Gromov-Hausdorff distance $d_{\rm GH}(X,Y)$ between two metric spaces $X$ and $Y$.
  It is known that this distance $d_{\rm GH}$ is a metric function
  on the set $\mathbb{M}$ consisting of the isometry classes of compact metric spaces.
  In Gromov's celebrated paper \cite{gromov1981structures},  he proved that
  the subset $\mathcal{M}$ of $\mathbb{M}$ consisting of the isometry classes of compact Riemannian manifolds
  with Ricci curvature bounded below and diameter bounded above
  is relatively compact with respect to the Gromov-Hausdorff distance.
  It is known as Gromov's compactness theorem.
  That is, the closure $\ol{\mathcal{M}}^{\rm GH}$ of $\mathcal{M}$ in $\mathbb{M}$
  is compact.
Hence we can define a notion of convergence for sequences in $\mathcal{M}$,
called Gromov-Hausdorff convergence.
In particular, for any sequence of Ricci flat
manifolds, we can take a convergent subsequence by rescaling the diameters to be $1$.
A compact metric space to which such a sequence converges is called a \emph{Gromov-Hausdorff limit} of the sequence.
\end{sss}

\begin{sss}
We discuss the existence of special Lagrangian fibrations
near maximally degenerated fibers (`large complex structure limit')
for K-trivial surfaces, as expected in the mirror symmetry context, essentially
after \cite{OO}. For instance, it is called `metric SYZ conjecture' in
\cite{YangLi}.

Before stating the statements for abelian surfaces and their quotients,
we recall the result proven in \cite{OO}. For simplicity of description,
we identify Riemannian metrics and induced distance.
\end{sss}

\begin{Thm}[{\cite[Chapter 4, especially pp.34-35, 46-49]{OO}}]\label{OO.review}
For any maximally degenerating
family of polarized K3 surfaces $(\mathcal{X}|_{\Delta^*},\mathcal{L}|_{\Delta^*})$
over $\Delta^*$, the following hold. Here, we denote the fiber over $t$ as $(X_t,L_t)$.
\begin{enumerate}
\item \label{k3.i} For any $t\in \Delta^*$ with $|t|\ll 1$,
there is a special Lagrangian fibration $X_t \to B_t$
with respect to the K\"ahler form $\omega_t$
of the Ricci-flat K\"ahler metric $g_{\rm KE}(X_t)$
with $[\omega_t]=c_1(\mathcal{L}_t)$
and the imaginary part ${\rm Im}(\Omega_t)$ of a non-zero element
$(0\neq ) \Omega_t\in H^0(X_t,\omega_{X_t})$.
Here, $B_t$ is homeomorphic to $S^2$.

We note that $\omega_t$ and ${\rm Im}(\Omega_t)$ induce
affine structures with singularities on $B_t$ as $\nabla_{A}(t)$  and $\nabla_{B}(t)$ respectively,
as well as its McLean metric $g_t$
(cf., e.g., \cite[\S 3]{Hit}, \cite[\S 1]{gross2013mirror}, \cite[\S 4.3]{OO}).

\item \label{k3.ii}
We assume $|t|\ll 1$.
We consider the obtained $S^2$ associated with an IAMS structure,
and the McLean metric
$(B_t,\nabla_A(t),\nabla_B(t),g_t)$ for $t\neq 0$. When $t\to 0$,
they converge in the natural sense,
without collapsing,
to another $S^2$
again with three additional structures
$(B_0,\nabla_A(0),\nabla_B(0),g_0)$.

In this terminology, the Gromov-Hausdorff limit of $(X_t,g_{\rm KE}(X_t))$ for $t\to 0$
coincides with $(B_0,g_0)$.
\end{enumerate}
\end{Thm}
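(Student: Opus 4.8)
The plan is to trade the special Lagrangian fibration for a holomorphic elliptic fibration by a hyperk\"ahler rotation, then to feed the result into the collapsing theory for Ricci-flat metrics along elliptic fibrations, and finally to read off the limiting affine structures and McLean metric from the variation of Hodge structure. First, fix $t$ with $|t|\ll 1$ and let $g_{\mathrm{KE}}(X_t)$ be the Ricci-flat K\"ahler metric with $[\omega_t]=c_1(\mathcal L_t)$; the real forms $\mathrm{Re}\,\Omega_t$, $\mathrm{Im}\,\Omega_t$, $\omega_t$ span a positive-definite $3$-plane in $H^2(X_t,\R)$, and rotating inside it produces a complex structure $J_t$ on the underlying real manifold for which $\mathrm{Im}\,\Omega_t$ becomes the K\"ahler form. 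By the Harvey--Lawson correspondence on hyperk\"ahler surfaces, a torus is a special Lagrangian fibre for the pair $(\omega_t,\mathrm{Im}\,\Omega_t)$ exactly when it is a holomorphic elliptic curve for $J_t$. Since the family is \emph{maximally} degenerating, the limiting mixed Hodge structure supplies a nonzero monodromy-invariant isotropic class $e$; I would verify, as in \cite{OO}, that $e$ is nef for $J_t$, hence defines an elliptic fibration $X_t\to B_t$ with $B_t\cong\mathbb P^1\cong S^2$ whose fibres are precisely the special Lagrangian tori. Away from the discriminant, integrating the relevant period $1$-forms over vanishing cycles of $\omega_t$ and $\mathrm{Im}\,\Omega_t$ produces the two integral affine structures $\nabla_A(t)$, $\nabla_B(t)$ and the McLean metric $g_t$ on $B_t^{\mathrm{sm}}$ in the usual way (cf.\ \cite{Hit}, \cite{gross2013mirror}), and for a maximal degeneration the singular fibres, their number and their local monodromies are the prescribed ones; this gives (\ref{k3.i}).

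Next, as $t\to 0$ the elliptic fibres shrink, and I would invoke the collapsing theorems for Ricci-flat metrics along elliptic (more generally, abelian) fibrations --- Gross--Wilson for generic $I_1$-fibres, and Tosatti and Gross--Tosatti--Zhang for the remaining Kodaira fibre types occurring here --- to conclude that, after rescaling the diameter to $1$, $(X_t,g_{\mathrm{KE}}(X_t))$ converges in the Gromov--Hausdorff sense to the metric completion of $(B_t^{\mathrm{sm}},g_t)$, a space again homeomorphic to $S^2$. The same local models show that the base itself does \emph{not} collapse (its diameter stays bounded above and below) and that $\nabla_A(t),\nabla_B(t),g_t$ converge to data $\nabla_A(0),\nabla_B(0),g_0$ on $B_0^{\mathrm{sm}}$; continuity of the period map over $\Delta$ then pins down the limiting data explicitly. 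Putting this together yields the convergence statement (\ref{k3.ii}), with the Gromov--Hausdorff limit of $(X_t,g_{\mathrm{KE}}(X_t))$ equal to $(B_0,g_0)$.

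The main obstacle is the collapsing step. One must check that the available Ricci-flat collapsing estimates apply \emph{uniformly} as the complex structure $J_t$ itself degenerates --- not merely for a fixed elliptically fibred K3 with a degenerating K\"ahler class --- and that the normalising diameter stays bounded away from $0$ and $\infty$. This last point requires controlling the elliptic fibre volumes through the polarization $\mathcal L_t$ together with the asymptotics of the periods, and it is exactly the analytic heart of \cite{OO}; by contrast the hyperk\"ahler rotation of the first step and the Hodge-theoretic identification of the limit are comparatively formal.
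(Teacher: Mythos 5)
The paper does not prove Theorem \ref{OO.review}: it is explicitly \emph{recalled} from \cite{OO} (see the surrounding text, ``we recall the result proven in \cite{OO}''), and the present paper simply states it in preparation for the abelian-surface analogue Theorem \ref{GHside}, whose proof is itself deferred to \cite{GO}. So there is no in-paper argument to compare your proposal against.

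That said, as a reconstruction of the strategy in \cite{OO} your outline is on target and correctly identifies the architecture: hyperk\"ahler rotation converting the special Lagrangian torus fibration into a $J_t$-holomorphic elliptic fibration; the monodromy-invariant isotropic class coming from the maximal (Type III) degeneration supplying the fibration $X_t\to\mathbb P^1$; the two affine structures and the McLean metric read off from periods of $\omega_t$ and $\mathrm{Im}\,\Omega_t$; and the passage to $t\to 0$ via Ricci-flat collapsing theory. Two small caveats. First, the phrase ``Harvey--Lawson correspondence'' slightly overstates the provenance of the sLag $\leftrightarrow$ holomorphic-curve dictionary --- on a hyperk\"ahler surface this is the elementary linear-algebra fact about the $S^2$ of complex structures, and \cite{OO} treat it as such --- though nothing hinges on the attribution. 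Second, and more substantively, you are right to flag the uniformity of the collapsing estimates across a \emph{degenerating} family of complex structures as the genuine analytic difficulty; the literal statements of Gross--Wilson and Gross--Tosatti--Zhang are for a fixed elliptically fibred K3 with a shrinking K\"ahler class, and one of the main contributions of \cite[Ch.~4]{OO} is precisely to organize the argument (via moduli compactness and period-map control near the Type~III boundary) so that those local estimates combine into a global Gromov--Hausdorff statement with non-collapsed, diameter-normalized base. Your proposal correctly locates that step as the heart of the matter, which is the right level of detail for a citation review, but if one were to turn the sketch into a full proof it would need to import the compactness-and-normalization bookkeeping from \cite{OO} rather than appeal to the collapsing theorems off the shelf.
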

\begin{Rem}
  Since hyperK\"ahler rotation is performed in the process of obtaining
  a special Lagrangian fibration, the complex dimension of $X_t$ must be $2$ (or even).
\end{Rem}
Below we discuss the case of abelian surfaces and their quotients.
In that case, we can apply similar methods as below
but more explicitly as \eqref{ab.iii} and \eqref{ab.iv} below.
The proof follows from essentially the same
method as \cite{OO} and is easier.
The details of the proof appears in \cite{GO}, which we wrote with Odaka after this paper.

\begin{Thm}[{\cite[Theorem 2.1]{GO}}]\label{GHside}
We take an arbitrary maximally degenerating
family of polarized abelian surfaces $(G|_{\Delta^*},\mathcal{L}|_{\Delta^*})$
over $\Delta^*$ with a fiber-preserving symplectic action of finite group
$H$ on $G|_{\Delta^*}$ together with linearization on
$\mathcal{L}|_{\Delta^*}$ (e.g., $H$ can be trivial or simple $\{\pm 1\}$-multiplication).
We denote the quotient by $H$ as
$(G'|_{\Delta^*},\mathcal{L}'|_{\Delta^*})\to \Delta^*$.
Then, the following hold:
\begin{enumerate}

\item \label{ab.i} For any $t\in \Delta^*$ with $|t|\ll 1$,
there is a special Lagrangian fibration $f_t\colon G_t \to B_t$
with respect to the K\"ahler form $\omega_t$
of the flat metric $g_{\rm KE}(G_t)$
with $[\omega_t]=c_1(\mathcal{L}_t)$
and the imaginary part ${\rm Im}(\Omega_t)$
of a non-zero element
$ \Omega_t\in H^0(G_t,\omega_{G_t})$.
Here, $B_t$ is a $2$-torus and so are all fibers of $f_t$.
Note that $\omega_t$ and ${\rm Im}(\Omega_t)$ again induce
affine structures on $B_t$ as $\nabla_{A}(t)$  and $\nabla_{B}(t)$ respectively,
as well as its (flat) McLean metric $g_t$.

Below, we assume $|t|\ll 1$.

\item \label{ab.ii}
We consider the obtained base associated with an integral affine structure and a flat metric
$(B_t,\nabla_A(t),\nabla_B(t),g_t)$ for $t\neq 0$. They converge to another
a $2$-torus with the same additional structures
$(B_0,\nabla_A(0),\nabla_B(0),g_0)$ in the natural sense, when $t\to 0$.

In this terminology, the Gromov-Hausdorff limit of $(G_t,g_{\rm KE}(G_t))$ for $t\to 0$
coincides with $(B_0,g_0)$.

\item \label{ab.iii}
The $H$-action on $G_t$ preserves the fibers of $f_t$.
Thus, there is a natural induced action of $H$ on $B_0$, which preserves the
three structures $\nabla_A(0),\nabla_B(0)$ and $g_0$.
The natural quotient of $f_t$ by $H$ denoted as
$f'_t\colon G'_t\to B'_t$ is again a special Lagrangian fibration
with respect to the descents of $\omega_t$
and $(0\neq ) \Omega_t\in H^0(G_t,\omega_{G_t})$.

\item \label{ab.iv}
If $\mathcal{L}|_{\Delta^*}$ is principally polarized and $H$ is trivial,
the integral point of $B_0$ with respect to the integral affine structure $\nabla_A(0)$
consists of only $1$ point, which automatically determines $\nabla_A(0)$.
The corresponding Gram matrix of $g_0$ is the matrix $(cB(l_i,l_j))$, where $c\in\R$ is
 a correction term to make the diameter to $1$.
Also the transition function of the integral basis of $\nabla_A(0)$ to that of $\nabla_B(0)$
is given by the same matrix $(cB(l_i,l_j))$.

\end{enumerate}
\end{Thm}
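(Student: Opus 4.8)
The plan is to transplant the argument of \cite[Chapter 4]{OO} to abelian surfaces, where two simplifications occur: every Ricci-flat metric is already flat (so no semi-flat approximation is needed), and every special Lagrangian fibration is linear (so $G_t$ itself carries no singular fibres). \emph{First} I would make the degeneration explicit: using the uniformization attached to the split ample degeneration data $(M,L,\phi,a,b)$ of \eqref{setting2}, write $G_t\cong\C^2/\Lambda_t$, where $\Lambda_t$ is generated by a fixed integral lattice together with the columns of a period matrix whose imaginary part is $(-\log|t|)\cdot\big(B(l_i,l_j)\big)$ up to a bounded real-analytic correction, with $B(l_i,l_j)=b(l_i,\phi(l_j))$. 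The polarization pins down the flat K\"ahler form $\omega_t$ with $[\omega_t]=c_1(\mathcal{L}_t)$, and $\Omega_t=dz_1\wedge dz_2$ is, up to scalar, the holomorphic $2$-form.

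\emph{Next} I would produce the special Lagrangian torus fibration. Performing the hyperK\"ahler rotation that exchanges $\omega_t$ with a normalization of $\mathrm{Re}(\Omega_t)$ turns the pair $(\omega_t,\mathrm{Im}\,\Omega_t)$ into the K\"ahler and holomorphic data of a new abelian-surface complex structure on $G_t$, in which a linear elliptic fibration $f_t\colon G_t\to B_t$ over a $2$-torus exists, its existence in the degeneration limit being forced by the monodromy; the fibres of $f_t$ are the associated subtori and are therefore special Lagrangian for the original $(\omega_t,\mathrm{Im}\,\Omega_t)$. The transverse periods of $\omega_t$ and of $\mathrm{Im}\,\Omega_t$ give the affine structures $\nabla_A(t)$ and $\nabla_B(t)$ on $B_t$, and the transverse part of the flat metric gives the flat McLean metric $g_t$; all three are written in closed form in terms of $\Lambda_t$, which yields \eqref{ab.i}.

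\emph{Then} I would pass to the limit. Normalizing diameters to $1$ and letting $t\to0$, the rescaled quadruples $(B_t,\nabla_A(t),\nabla_B(t),g_t)$ converge to a flat $(B_0,\nabla_A(0),\nabla_B(0),g_0)$ because they depend real-analytically on the controlled (bounded, resp.\ logarithmically divergent) entries of the period matrix; since the fibres of $f_t$ shrink to points, the standard collapsing estimate for flat torus fibrations gives $(G_t,g_{\rm KE}(G_t))\to(B_0,g_0)$ in the Gromov--Hausdorff sense, establishing \eqref{ab.ii}. For the $H$-action I would note that $H$ acts on $(G_t,\mathcal{L}_t)$ compatibly with the linearization, hence by affine symplectic automorphisms preserving the unique flat metric and $\Omega_t$ up to a constant, and that $H$ acts on the degeneration data, hence on the relevant sublattice of $\Lambda_t$ (cf.\ \ref{act}); choosing $f_t$ to be $H$-equivariant makes its quotient $f'_t\colon G'_t\to B'_t$ again a special Lagrangian fibration for the descended forms, now possibly with singular fibres over the fixed points, and taking the limit produces the induced $H$-action on $(B_0,\nabla_A(0),\nabla_B(0),g_0)$, proving \eqref{ab.iii}. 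When $(G,\mathcal{L})$ is principally polarized and $H$ is trivial, $\phi\colon L\xrightarrow{\ \sim\ }M$ and the explicit formulas of the previous steps specialize to \eqref{ab.iv}: the limiting base has a unique $\nabla_A(0)$-integral point, and the Gram matrix of $g_0$ and the $\nabla_A(0)\to\nabla_B(0)$ transition matrix are both $\big(cB(l_i,l_j)\big)$, where $c$ is the diameter-normalizing constant.

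\emph{The main obstacle} will be \eqref{ab.ii} and \eqref{ab.iv}: making rigorous the simultaneous, non-collapsing convergence of $B_t$ together with all three additional structures, and in particular verifying that the limiting McLean metric is exactly $\big(cB(l_i,l_j)\big)$. The essential difficulty is one of translation --- matching the analytic period matrix of $\Lambda_t$ with the algebraic pairing $b$ of the degeneration data of \cite{Kun} (cf.\ \cite{FC}) --- but it is where the bookkeeping must be carried out with care; the only other delicate point is the descent in the $H$-equivariant step through the orbifold points of $G'_t$, which is handled exactly as the corresponding passage in \cite{OO} (cf.\ also \cite{overkamp2021}). Full details are deferred to \cite{GO}.
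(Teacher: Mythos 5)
The paper does not prove this theorem: it cites \cite{GO} (listed as ``in preparation'') and states only that ``the proof follows from essentially the same method as \cite{OO} and is easier,'' with all details deferred to \cite{GO}. There is therefore no in-paper proof to compare your argument against; all that can be checked is consistency with that one-sentence indication of method.

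Your sketch is consistent with it. Transplanting the hyperK\"ahler-rotation argument of \cite[Ch.~4]{OO} to the flat abelian setting, reading the affine structures $\nabla_A(t),\nabla_B(t)$ and the McLean metric off the period lattice of the Mumford/K\"unnemann-type uniformization $G_t\cong\mathbb{C}^2/\Lambda_t$, and then descending through the $H$-action is exactly what ``the same method as \cite{OO}, but easier'' should amount to, and your observation that the K3 complications (semi-flat approximation, singular fibres, Torelli-type input) disappear in the abelian case is the reason it is easier. The two places where the sketch is compressed are (i) matching the imaginary part of the analytic period matrix with the degeneration data $a,b$ so that it really is $(-\log|t|)\cdot\bigl(B(l_i,l_j)\bigr)$ up to bounded terms, and (ii) the fact that the special Lagrangian fibration $f_t$ is not merely \emph{chosen} $H$-equivariant but is automatically preserved by $H$ (which follows from essential uniqueness of linear special Lagrangian fibrations with a fixed $H$-invariant fibre class). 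You flag both as needing careful bookkeeping, and they are precisely the points the paper itself defers to \cite{GO}; I do not see a genuine gap beyond what is openly acknowledged on both sides.
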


We are now in a position to define the Gromov-Hausdorff limit Picture.
\begin{Cor}\label{GHside for Kummer}
  Under the setting \eqref{setting2}, we use the same notation as above Theorem \ref{GHside}.
  Then the Gromov-Hausdorff limit of $(G'_t,g'_{\rm KE}(G'_t))$ for $t\to 0$
  coincides with the Gromov-Hausdorff limit $(B'_0,g'_0)$ of $(B'_t,g'_t)$ for $t\to 0$,
  where the metric $g'_{\rm KE}(G'_t)$ (resp. $g'_t$) on $G'_t$ (resp. $B'$) is induced by $g_{\rm KE}(G_t)$ (resp. $g_t$).
  Furthermore, the affine manifold $(B'_0,\nabla_B'(0))$ with singularities coincides with the quotient of the
   affine manifold $(B_0,\nabla_B(0))$
  by $H=\{\pm 1\}$, where the affine structure $\nabla_B'(0)$ with singularities is induced by $\nabla_B(0)$.
  In particular, we can regard the affine structure $\nabla_B'(0)$ with singularities as an IAMS structure by rescaling.
\end{Cor}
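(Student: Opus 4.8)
The plan is to derive the Corollary directly from Theorem~\ref{GHside}, using parts \eqref{ab.ii}, \eqref{ab.iii} and \eqref{ab.iv}, together with the elementary principle that Gromov--Hausdorff limits commute with quotients by a finite group acting by isometries. Indeed, almost all of the geometric work has already been carried out in \cite{GO}; the Corollary is essentially a repackaging of it, plus the observation about integrality. The first step is to record the equivariance that makes the quotients behave well. Since $H=\{\pm1\}$ acts on $G_t$ holomorphically and symplectically, fixing the K\"ahler class $[\omega_t]=c_1(\mathcal L_t)$, it preserves the unique Ricci-flat K\"ahler metric in that class, i.e.\ the flat metric $g_{\rm KE}(G_t)$; hence $g'_{\rm KE}(G'_t)$ is the quotient metric of $g_{\rm KE}(G_t)$. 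By Theorem~\ref{GHside}\eqref{ab.iii} the fibers of $f_t$ are $H$-invariant, so $H$ descends to an isometric action on $(B_t,g_t)$, and $f'_t\colon G'_t\to B'_t$ is the quotient of $f_t$; thus $B'_t=B_t/H$ and $g'_t$ is the quotient metric of $g_t$. As $t\to0$, Theorem~\ref{GHside}\eqref{ab.ii} gives, compatibly, $(G_t,g_{\rm KE}(G_t))\to(B_0,g_0)$ and $(B_t,g_t)\to(B_0,g_0)$ in the Gromov--Hausdorff sense, with the $H$-actions converging to one and the same isometric $H$-action on $B_0$.

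Next I would invoke the standard descent of equivariant Gromov--Hausdorff convergence: if compact metric spaces $Z_t$ converge to $Z_0$ equivariantly for isometric actions of a fixed finite group $H$, then the quotients $Z_t/H$ converge to $Z_0/H$, each equipped with the quotient distance $\bar d(\bar x,\bar y):=\min_{h\in H}d(x,hy)$. Applying this with $Z_t=G_t$ and with $Z_t=B_t$ shows that the Gromov--Hausdorff limit of $(G'_t,g'_{\rm KE}(G'_t))$ and the Gromov--Hausdorff limit of $(B'_t,g'_t)$ both equal $(B_0/H,\bar g_0)$, and we take this common space to be $(B'_0,g'_0)$. This establishes the first assertion.

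Finally, for the affine structures I would use the explicit description in Theorem~\ref{GHside}\eqref{ab.iii} and \eqref{ab.iv}. In the affine coordinates furnished by $\nabla_A(0)$ (equivalently $\nabla_B(0)$), $B_0$ is a $2$-torus, and $H=\{\pm1\}$ acts through $n\mapsto -n$, preserving $\nabla_B(0)$; this action has exactly four fixed points, the $2$-torsion points. Hence $B'_0=B_0/H$ is $S^2$, and the affine structure with singularities $\nabla_B'(0)$ that it induces is by construction the quotient $(B_0,\nabla_B(0))/H$, whose singular set is exactly the image of those four fixed points --- a finite, hence codimension-$2$, subset of the real surface $B'_0$. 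To see that $\nabla_B'(0)$ underlies an IAMS after rescaling, note that by Theorem~\ref{GHside}\eqref{ab.iv} the transition between the integral bases of $\nabla_A(0)$ and of $\nabla_B(0)$ is the $\Z$-valued symmetric matrix $(cB(l_i,l_j))$, so rescaling $\nabla_B(0)$ (equivalently $g_0$) by $c^{-1}$ turns its transition functions into elements of $\mathrm{Aff}(\Z^2)$; since $n\mapsto -n$ lies in $\mathrm{GL}(\Z^2)$, this integral affine structure descends to $B'_0$ away from the four points, giving the desired IAMS structure. The only real subtlety, I expect, is the first step: pinning down the precise sense of ``converge in the natural sense'' in Theorem~\ref{GHside}\eqref{ab.ii} so that it is genuinely \emph{equivariant} convergence (of metrics and of affine structures), which is what lets one pass to the $H$-quotients; once that is in place --- and it is part of what is proved in \cite{GO} --- everything else is formal.
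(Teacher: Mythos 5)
Your proposal is correct and follows essentially the same route as the paper, which derives the corollary directly from Theorem~\ref{GHside}: the paper's own proof is simply ``It follows from Theorem~\ref{GHside}~(3); since $\nabla_B(0)$ is determined by the matrix $(B(l_i,l_j))$ up to scaling, the last assertion holds.'' You are spelling out the implicit steps — $H$-invariance of $g_{\rm KE}(G_t)$, descent of equivariant GH convergence to quotients, and the rescaling that makes the transition matrices land in $\mathrm{Aff}(\Z^2)$ — all of which are consistent with what the paper leaves to the reader (and to the cited forthcoming reference \cite{GO}).
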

\begin{proof}
It follows from Theorem \ref{GHside} (3).
Since the affine structure $\nabla_B(0)$ of $B_0$ is detemined by
the matrix $B(l_i,l_j)$ up to scaling, the last assertion holds.
\end{proof}
\begin{sss}
  For the degenerateing family
  $(G'|_{\Delta^*},\mathcal{L}'|_{\Delta^*})$ as in Theorem \ref{GHside},
  we can give the IAMS structure $\nabla_{B}'(0)$ (up to scaling) to the
Gromov-Hausdorff limit $B'_0$ as above Corollary \ref{GHside for Kummer}.
We call it \emph{Gromov-Hausdorff limit Picture}.
For instance, it is also called \emph{Collapse Picture} in \cite{Kontsevich2006}.

On the other hand, for degenerating family of polarized abelian surfaces
$(G|_{\Delta^*},\mathcal{L}|_{\Delta^*})$ as in Theorem \ref{GHside}, we can also
give the integral affine structure $\nabla_{B}(0)$ (up to scaling) to the
Gromov-Hausdorff limit $B_0$ as Theorem \ref{GHside}.
Then we also call it Gromov-Hausdorff limit Picture.
\end{sss}

\begin{Thm}\label{Conj3 for as}
  Under the setting \eqref{setting2}, we use the same notation as Theorem \ref{GHside}.
Then the integral affine manifold induced by non-Archimedean SYZ Picture coincides with
  the integral affine manifold induced by the Gromov-Hausdorff limit Picture up to scaling.
  That is,
  $\Sk (\Pm)$ and $\nabla_B(0)$ give the same integral affine structure (up to scaling) to
   the 2-torus $T^2\cong \R^2/\Z^2$.

\end{Thm}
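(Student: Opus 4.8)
The plan is to compare the two integral affine structures on the 2-torus by computing each side explicitly in terms of the degeneration data $(M,L,\phi,a,b)$, and observing that both are governed by the same symmetric positive definite matrix. First I would recall from Theorem \ref{SYZ for av} (equivalently Proposition \ref{SYZ for av}) that the non-Archimedean SYZ Picture gives $\Sk(\Pm) \cong N_\R/L$, where the lattice $L$ is embedded in $N_\R$ via $\tb : L \to N$, $\tb(l) = b(l,-)$. Thus, writing things in a basis $\{l_i\}$ of $L$ and the dual-type basis on $N$, the integral affine structure on this quotient torus is recorded by the Gram matrix $(\langle \tb(l_i), ? \rangle)$; since $(G,\mathscr{L})$ is principally polarized, $\phi$ is an isomorphism and one can identify $N$ with $L$ (via $\phi$) so that $\tb$ becomes precisely the matrix $(b(l_i,\phi(l_j))) = (B(l_i,l_j))$. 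Concretely, the transition from the integral/tropical basis of $N_\R$ to the period lattice $L \hookrightarrow N_\R$ is the matrix $(B(l_i,l_j))$, and this is exactly the data of the integral affine structure on $T^2 = N_\R/L$.

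Second I would read off the Gromov-Hausdorff side from Theorem \ref{GHside}, specifically parts \eqref{ab.ii} and \eqref{ab.iv}. That theorem tells us that the Gromov-Hausdorff limit $(B_0, \nabla_A(0), \nabla_B(0), g_0)$ of the fibers of the degenerating family of principally polarized abelian surfaces is a 2-torus whose integral affine structure $\nabla_A(0)$ has a single integral point (hence is determined up to the lattice it generates), whose flat McLean metric $g_0$ has Gram matrix $(cB(l_i,l_j))$ for a diameter-normalizing constant $c$, and — crucially — whose transition function from the $\nabla_A(0)$-basis to the $\nabla_B(0)$-basis is also given by $(cB(l_i,l_j))$. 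The affine structure relevant to the Gromov-Hausdorff limit Picture is $\nabla_B(0)$ (the one built from ${\rm Im}(\Omega_t)$), and part \eqref{ab.iv} expresses it relative to $\nabla_A(0)$ precisely by the matrix $(cB(l_i,l_j))$.

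Third, the comparison: both structures are structures on the same underlying 2-torus, and both are encoded (up to the overall scaling $c$, which is exactly the "up to scaling" in the statement) by the symmetric positive definite matrix $(B(l_i,l_j))$. On the non-Archimedean side this matrix is the period matrix $\tb$ realizing $L$ as a sublattice of $N_\R \cong \R^2$; on the Gromov-Hausdorff side it is simultaneously the Gram matrix of $g_0$ and the change-of-basis matrix between $\nabla_A(0)$ and $\nabla_B(0)$. Matching the lattice $\Z^2 \hookrightarrow \R^2$ (the integral affine lattice of $N_\R$ on the non-Archimedean side, and the $\nabla_A(0)$-lattice on the Gromov-Hausdorff side, which consists of a single point in the quotient in both cases) and matching the period lattice $L$, one sees that $\nabla_B(0)$ and the structure induced by $\Sk(\Pm)$ coincide after rescaling by $c$. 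Hence $\Sk(\Pm)$ and $\nabla_B(0)$ give the same integral affine structure, up to scaling, to $T^2 \cong \R^2/\Z^2$.

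The main obstacle I expect is bookkeeping of dualities and identifications rather than any deep step: one must be careful that the "integral affine lattice" appearing on the non-Archimedean side (coming from the tropicalization map $\rho_T$, i.e. the lattice $N \subset N_\R$) is identified correctly with the lattice $\nabla_A(0)$ from Theorem \ref{GHside}, and that under the principal polarization the period lattice $L$ sits inside each via the \emph{same} matrix $(B(l_i,l_j))$ — in particular that the symmetry and positive-definiteness of $b(-,\phi(-))$ (guaranteed by the axioms of $\cC$) is what allows the matrix to simultaneously play the role of a Gram matrix (metric side) and a lattice-embedding matrix (affine side). Once these identifications are pinned down, the equality of the two IAMS structures up to scaling is immediate from Proposition \ref{SYZ for av} and Theorem \ref{GHside}\eqref{ab.iv}.
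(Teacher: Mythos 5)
Your proposal is correct and takes essentially the same approach as the paper: both sides compute the non-Archimedean structure on $N_\R/L$ from Proposition \ref{SYZ for av} (the lattice $\tb\colon L\hookrightarrow N$, which in the basis adapted to $\phi$ has matrix $(B(l_i,l_j))$) and the Gromov-Hausdorff structure from Theorem \ref{GHside}\eqref{ab.iv} (transition matrix $(cB(l_i,l_j))$), then match the two up to the scaling constant $c$. Your extra care about the identification $N\cong L^\vee$ via the principal polarization is sound bookkeeping that the paper leaves implicit.
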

\begin{proof}
It follows from Proposition \ref{SYZ for av} and  Theorem \ref{GHside}.
Indeed, we obtain the integral affine structures on $\R^2/\Z^2$ as follows:
In non-Archimedean SYZ Picture, the integral affine structure of $\R^2/\Z^2\cong N_\R/L$ is given by
the inclusion $\tb: L\to N$.
In the Gromov-Hausdorff limit Picture, that of $\R^2/\Z^2$ is given by the matrix $B(l_i,l_j)$
up to scaling.
Hence, these two Pictures give the same integral affine structure to $N_\R/L$ up to scaling.
\end{proof}

\begin{Thm}[{\cite[Conjecture 3]{Kontsevich2006}} for Kummer Surfaces]\label{main}
  Under the setting \eqref{setting2}, we use the same notation as Corollary \ref{GHside for Kummer}.
Then the smooth locus of the IAMS induced by non-Archimedean SYZ Picture coincides with
  that of the IAMS induced by the Gromov-Hausdorff limit Picture up to scaling.
  That is,
  $\Sk (\cX)$ and $\nabla_B'(0)$ give the same IAMS structure (up to scaling) to
the $2$-sphere $S^2\cong (S^1\times S^1) /\{\pm 1\}$.
   In particular, the singular locus of the IAMS is $Z=\frac{1}{2}L/\Gamma=\{{\rm 4pts}\}$.
\end{Thm}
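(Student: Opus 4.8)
The plan is to reduce the statement to the already-established case of abelian surfaces, Theorem \ref{Conj3 for as}, by exhibiting both IAMS attached to the Kummer surface $X$ as the quotient of the corresponding integral affine $2$-torus attached to the abelian surface $A$ by \emph{the same} involution $H=\{\pm1\}$, and then invoking the known scaling-compatibility for $A$.

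First I would record the two Pictures for $X$ explicitly. On the non-Archimedean side, Theorem \ref{nAside} identifies $\Sk(\cX)\setminus Z$, with its induced integral affine structure, with $(N_\R/\Gamma)\setminus\{4\text{ pts}\}$, where $\Gamma=L\rtimes H$; equivalently it is the quotient of the integral affine $2$-torus $\Sk(\Pm)=N_\R/L$ of Proposition \ref{SYZ for av} by the residual $H=\{\pm1\}$-action, which on $N_\R$ is $n\mapsto-n$ (as fixed in \S3 and used in \eqref{simplicial covering}). On the Gromov--Hausdorff side, Corollary \ref{GHside for Kummer} says $(B_0',\nabla_B'(0))$ is, up to scaling, the quotient of the integral affine $2$-torus $(B_0,\nabla_B(0))$ of Theorem \ref{GHside} by $H=\{\pm1\}$, the action being the descent to $B_0$ of the symplectic $H$-action on $G_t$ furnished by Theorem \ref{GHside}\,\eqref{ab.iii}. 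So it suffices to compare the two quotients of the common underlying $2$-torus.

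Next I would invoke Theorem \ref{Conj3 for as}: on the $2$-torus $T^2=N_\R/L$ the integral affine structure $\Sk(\Pm)$ (pinned down by $\tb\colon L\hookrightarrow N$) agrees, up to an overall scale, with $\nabla_B(0)$ (pinned down by the Gram matrix $(B(l_i,l_j))$). What remains is to check that this identification is $H$-equivariant, i.e.\ that the two $\{\pm1\}$-actions on $T^2$ agree. Both are integral affine involutions of a $2$-torus with isolated fixed points: on the non-Archimedean side the fixed locus is $\tfrac12L/L$, which by the Example following Lemma \ref{key subdivision} consists of exactly $2^{\dim N}=4$ points; on the Gromov--Hausdorff side the involution descends from $x\mapsto-x$ on the flat torus $G_t$, hence along the flat fibres of $f_t$ it is, up to a translation, linear, so $\pm1$ on $B_t$, again with $4$ fixed points coming from $G_t[2]$. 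Since an integral affine involution of a $2$-torus with $4$ fixed points is the $(-1)$-map composed with a translation by a $2$-torsion point, and both Pictures are only defined up to translation and scaling, we may normalise a common fixed point to be the origin, whereupon both actions become $n\mapsto-n$; thus the isomorphism of Theorem \ref{Conj3 for as} is $H$-equivariant. Passing to quotients, $\Sk(\cX)$ and $(B_0',\nabla_B'(0))$ induce the same IAMS structure on $S^2\cong(S^1\times S^1)/\{\pm1\}$ up to scaling, and the singular locus is the image of $\tfrac12L/L$, namely $Z=\tfrac12L/\Gamma$, a set of $4$ points; being a locally finite union of points, hence of codimension $2$ in the surface $S^2$, this is a genuine IAMS in the sense of Definition \ref{affine}, which finishes the proof.

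The main obstacle I anticipate is precisely the $H$-equivariance step: one must be sure that the involution induced on the base of the special Lagrangian fibration by $(-1)$-multiplication on $G_t$ corresponds, under the affine isomorphism of Theorem \ref{Conj3 for as}, to the $(-1)$-action on $N_\R/L$ appearing in Theorem \ref{nAside}, and not to some other affine involution of the $2$-torus. The cleanest resolution is rigidity: an integral affine involution of a $2$-torus is determined by its fixed-point set up to translation, so matching the two $4$-point fixed loci on the two sides forces the two involutions to coincide modulo the translation and scaling freedom already built into the comparison. Everything else is supplied by Theorem \ref{Conj3 for as}, Theorem \ref{nAside}, Corollary \ref{GHside for Kummer}, Proposition \ref{SYZ for av} and the Example in \S3.
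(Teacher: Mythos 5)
Your proof is correct and follows the same route as the paper's: both IAMS are exhibited as the quotient of the common integral affine $2$-torus $N_\R/L$ by $H=\{\pm1\}$, and the comparison is then reduced to Theorem \ref{Conj3 for as} together with Theorem \ref{nAside} and Corollary \ref{GHside for Kummer}. Your additional verification that the identification of Theorem \ref{Conj3 for as} is $H$-equivariant (via rigidity of integral affine involutions of a $2$-torus with isolated fixed points, forcing the linear part to be $-\mathrm{Id}$) makes explicit a step the paper leaves implicit, and is a welcome clarification rather than a different method.
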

\begin{proof}
It follows from Theorem \ref{nAside}, Corollary \ref{GHside for Kummer} and Theorem \ref{Conj3 for as}.
Indeed, those two IAMS structures are the quotient of $N_\R/L$ by $H$.
Hence, these two Pictures give the same integral affine structure to $N_\R/\Gamma$ up to scaling.
\end{proof}

\begin{Rem}
  We note that, in the non-Archimedean SYZ Picture, we were implicitly rescaling
the affine structure by taking a
   base change $f:S'\to S$ as in \eqref{setting2}.
\end{Rem}

\bibliographystyle{amsalpha}
\providecommand{\bysame}{\leavevmode\hbox to3em{\hrulefill}\thinspace}
\providecommand{\MR}{\relax\ifhmode\unskip\space\fi MR }
\providecommand{\MRhref}[2]{%
  \href{http://www.ams.org/mathscinet-getitem?mr=#1}{#2}
}
\providecommand{\href}[2]{#2}

\end{document}